\theoremstyle{plain}
\date{\today}
\title{Composition operators on the Hardy space of the tridisc}
\author{Fr\'ed\'eric Bayart}
\address{Laboratoire de Math\'ematiques Blaise Pascal UMR 6620 CNRS, Universit\'e Clermont Auvergne, Campus universitaire C\'ezeaux, 3 place Vasarely, 63178 Aubière Cedex, France.}
\email{frederic.bayart@uca.fr}
\subjclass{47B37}
\keywords{composition operators, polydisc, Hardy spaces, Carleson measures}
\newcommand{\veps}{\varepsilon}
\def\RR{\mathbb R}
\def\NN{\mathbb N}
\def\TT{\mathbb T}
\def\DD{\mathbb D}
\def\CC{\mathbb C}
\def\Imm{\mathrm{Im}}
\def\Ree{\mathrm{Re}}
\def\ovd{\overline{\delta}}
\newcommand{\vect}{\textrm{span}}
\newtheorem{theorem}{Theorem}[section]
\newtheorem{lemma}[theorem]{Lemma}
\newtheorem{corollary}[theorem]{Corollary}
\theoremstyle{definition}}
\theoremstyle{definition}}
\theoremstyle{definition}\newtheorem{example}[theorem]{Example}}
\theoremstyle{definition}}
\theoremstyle{definition}}
\theoremstyle{definition}\newtheorem{remark}[theorem]{Remark}}
\begin{document}

\begin{abstract}
We give a necessary and sufficient condition for a holomorphic self-map $\phi$ of the tridisc to induce a bounded composition operator on the associated Hardy space.
This condition depends on the behaviour of the first and the second derivative of the symbol at boundary points. 
We also discuss compactness of composition operators on the bidisc and the tridisc.
\end{abstract}

\maketitle

\section{Introduction}

Let $\mathcal U$ be a domain in $\CC^d$, let $X$ be a Banach space of holomorphic functions on $\mathcal U$ and let $\phi:\mathcal U\to\mathcal U$
be holomorphic. The composition operator with symbol $\phi$ is defined by $C_\phi(f)=f\circ\phi,$ $f\in X$. The first question to solve when studying composition operators is that of continuity: for which symbols $\phi$ do $C_\phi$ induce a bounded composition operator on $X$? When $X$ is the Hardy space or a weighted Bergman space of the unit disc $\DD,$ the answer is easy: by the Littlewood
subordination principle, this is always the case. 

In several variables, this statement does not hold, and there are simple examples of self-maps of the euclidean ball (resp. of the polydisc)
which do not induce a bounded composition operator on the corresponding Hardy space (see for instance \cite{CSW84,CoMcl95}). Therefore, it is interesting to find a characterization of the symbols
$\phi$ inducing a bounded operator $C_\phi$, adding if necessary some regularity conditions on the symbol. 
For the ball, a very satisfying answer has been provided by Wogen in \cite{Wo88}: he characterized the continuity of $C_\phi$ acting 
on $H^2(\mathbb B_d)$ or $A^2_\alpha(\mathbb B_d)$ provided $\phi$ extends to a $\mathcal C^3$-function up to the boundary of ${\mathbb B_d}$. His characterization involves the first and second order derivatives of $\phi$ at any $\xi\in\partial \mathbb B_d$ such that $\phi(\xi)\in\partial \mathbb B_d.$ As a consequence, he got that continuity on $H^2(\mathbb B_d)$ is equivalent to continuity
on $A_\alpha^2(\mathbb B_d)$. 

In this paper we investigate the case of the polydisc, more precisely of the tridisc $\DD^3.$ 
It appears that characterizing the continuity of composition operators on $H^2(\DD^d)$ is a more difficult task than on $H^2(\mathbb B_d)$.
After a first attempt in \cite{KSZ08}, where a necessary condition is given,
the continuity problem was solved in \cite{BAYPOLY} on $A^2(\DD^2)$ for symbols regular up to the boundary,
and on $H^2(\DD^d)$ or on any $A^2_\alpha(\DD^d)$, $\alpha>-1,$ provided $\phi$ is an affine symbol.
The case of the Hardy space requires a careful and specific argument since the method of \cite{BAYPOLY} goes through Carleson measures
and it is well known that Carleson measures on the Hardy space of the polydisc are difficult objects.

The recent paper \cite{Ko22} solves the continuity problem for $H^2(\DD^2)$ and for $A^2(\DD^3)$. 
Our main theorem (see Theorem \ref{thm:main} below) gives a characterization of the symbols $\phi:\DD^3\to\DD^3$, regular up to the boundary, such that $C_\phi\in\mathcal L(H^2(\DD^3))$. As in Wogen's paper, this characterization involves conditions on the first and the second order derivatives of $\phi$, whereas the characterization on $H^2(\DD^2)$ or $A^2(\DD^3)$ just involve conditions on the first derivative of the symbol. 

Theorem \ref{thm:main} needs some terminology that will be introduced in Section \ref{sec:statement} to be stated but let us give a flavour of it.
One of the main difficulty of working in the polydisc is that we have to play with its two boundaries: its topological boundary and its distinguished boundary. For $\xi\in \TT^3$
such that $\phi(\xi)\notin\DD^3,$ we will discuss the properties of $\phi_I$ at $\xi,$ 
where $I\subset\{1,2,3\}$ is maximal with $\phi_I(\xi)\in\TT^{|I|},$ and $\phi_I=(\phi_{i_1},\dots,\phi_{i_p})$ if $I=\{i_1,\dots,i_p\}.$ In particular, 
the case $|I|=2$ will require a very careful analysis.

Although our characterization is not very easy to state, we will provide several examples showing that it is effective: 
given a concrete $\phi\in\mathcal O(\DD^3,\DD^3)\cap\mathcal C^3(\overline{\DD^3}),$
where $\mathcal O(\mathcal U,\mathcal V)$ denotes the set of holomorphic functions between the two open sets $\mathcal U\subset\CC^d$
and $\mathcal V\subset \CC^{d'}$, we will be able to decide whether $C_\phi\in\mathcal L(H^2(\DD^3))$. 

Beyond continuity, the next question to handle is that of compactness. It seems that it was not considered at all for $H^2(\DD^d)$. We will
first give a sufficient condition on $\phi\in\mathcal O(\DD^d,\DD^d)$ so that $C_\phi$ induces a compact operator on $H^2(\DD^d)$,
for any $d\geq 1$. We will then use this condition to study in depth the cases $d=2$ and $d=3.$

\medskip

{\sc Organization of the paper.} Section 2 contains the terminology needed to state our main theorem, that is Theorem \ref{thm:main}. Section 3 is devoted to some useful results
which will be used throughout the paper. The proof of Theorem \ref{thm:main} is divided into Sections 4 to 6: Sections 4 and 5 are devoted to volume estimates of preimages of Carleson windows by $\phi$ at boundary points with order of contact equal to $2.$ 
These estimates are used to finish the proof of Theorem \ref{thm:main} in Section 6. Several examples are given in Section 7 to illustrate the different cases which might happen whereas Section 8 deals with compactness issues.

\medskip

{\sc Notations.} Throughout this paper, we shall use the following notations. For $f,g:A\to \RR,$ we shall write $f\lesssim g$
provided there exists $C>0$ such that, for all $x\in A,$ $f(x)\leq Cg(x)$. For $\phi:\TT^d\to A$ and 
$\theta\in\RR^d,$ we shall write $\phi(\theta)$ for $\phi(e^{i\theta_1},\dots,e^{i\theta_d})$ and also sometimes $e^{i\theta}=(e^{i\theta_1},\dots,e^{i\theta_d})$.
The Lebesgue measure on $\RR^d$ will be denoted by $\lambda_d$ whereas the normalized Lebesgue measure on $\TT^d$ will be denoted by $\sigma_d.$
The unit vector $(1,\dots,1)$ will be denoted by $e$.
For a vector $x\in X^d$ and $I=\{i_1,\dots,i_p\}\subset \{1,\dots,d\},$ $x_I$ will mean $(x_{i_1},\dots,x_{i_p})$.

\section{Terminology and statement of the main result}\label{sec:statement}
We start with a function $\varphi\in\mathcal O(\DD^3,\DD)$ which admits a $\mathcal C^3$-smooth extension to $\overline{\DD^3}$. Assume that $\varphi(e)=1$ and let us write
\begin{align*}
\varphi(z)&=1+\frac{\partial\varphi}{\partial z_1}(e)(z_1-1)+\frac{\partial\varphi}{\partial z_2}(z_2-1)+\frac{\partial\varphi}{\partial z_3}(e)(z_3-1)\\
&\quad\quad+\sum_{j+k+l=2}a_{j,k,l}(z_1-1)^j(z_2-1)^k(z_3-1)^l+O(|z_1-1|^3+|z_2-1|^3+|z_3-1|^3).
\end{align*}
By the Julia-Caratheodory theorem (see Lemma \ref{lem:julia} below), $\frac{\partial \varphi}{\partial z_k}(e)\geq 0$ for $k=1,2,3$ and these derivatives cannot simultaneously be equal to $0.$ Hence, if we look at $\varphi(\theta)$, we may write
\begin{align*}
\Imm \varphi(\theta)&=L(\theta)+o(|\theta_1|+|\theta_2|+|\theta_3|) \label{eq:impart} \\
\Ree \varphi(\theta)&=1-Q(\theta)+O\left(|\theta_1|^3+|\theta_2|^3+|\theta_3|^3\right) 
\end{align*}
with $L$ a nonzero linear form and $Q$ a quadratic form. 

We now turn to a map $\phi\in\mathcal O(\DD^3,\DD^3)$ such that $\phi$ extends to a $\mathcal C^3$-smooth function on $\overline{\DD}^3$.
In order to prove or disprove continuity of $C_\phi$ on $H^2(\DD^3),$ it will be important
to deeply study the behaviour of $\phi$ near the points $\xi\in\TT^3$ such that $\phi(\xi)\notin \DD^3.$ This means that there exists $I\subset\{1,2,3\}$, $I\neq\varnothing$ 
such that $\phi_I(\xi)\in \TT^{|I|}$. For $i\in I$, the map $\phi_i$ can be expanded as above, giving rise to a linear map $L_i$ and a quadratic form $Q_i$. The mutual 
properties (and dependencies) of the maps $L_i,$ $Q_i,$ will be the key of our work.

The most difficult case will be when there exists $\xi\in\TT^3$, $I\subset\{1,2\}$ with $|I|=2,$ $\phi_I(\xi)\in\TT^2$,
$\nabla \phi_{i_1}$ and $\nabla\phi_{i_2}$ are linearly dependent. 
Without loss of generality, we may assume $\xi=e$, $I=\{1,2\}$ and $\phi_I(\xi)=(1,1)$. We may write for $j=1,2$,
\begin{eqnarray}
\Imm \phi_j(\theta)&=&\kappa_j L(\theta)+o(|\theta_1|+|\theta_2|+|\theta_3|) \label{eq:impart} \\
\Ree \phi_j(\theta)&=&1-Q_j(\theta)+O\left(|\theta_1|^3+|\theta_2|^3+|\theta_3|^3\right) \label{eq:realpart}
\end{eqnarray}
where $L$ is a nonzero linear form, $\kappa_j$ are nonzero real numbers and $Q_j$ are quadratic forms. 
Since $\Ree \phi_j\leq 1,$ the signature of each $Q_j$ should be $(n_j,0)$ with $n_j\geq 0$.
Therefore there exists $m\in\{0,\dots,3\}$ such that
the signature of $Q_1+Q_2$ is equal to $(m,0)$. We define 
$$s(\phi,I,\xi)=m.$$
Each $Q_j$ may be written $\sum_{k=1}^{n_j}L_{j,k}^2.$ This decomposition is not unique. However, it is easy to check that 
$$\vect(L_{j,k}:\ j=1,2,\ k=1,\dots,n_j)^\perp=\ker(Q_1+Q_2)$$
so that $\dim(\vect(L_{j,k})^\perp)=3-m$. Hence we may pick $(L_1,\dots,L_m)$ 
a basis of $\vect(L_{j,k})$. It should be observed that $L$ belongs to $\vect(L_1,\dots,L_m)$. Otherwise, we could find $\theta=(\theta_1,\theta_2,\theta_3)$ such that 
$L(\theta)\neq 0$ and $L_{j,k}(\theta)=0$ for $j=1,2$ and $k=1,\dots,n_j$. But this would contradict that
$$|\Imm \phi_j(\veps\theta)|^2+|\Ree \phi_j(\veps\theta)|^2\leq 1$$
for small values of $\veps.$ In particular we get $s(\phi,I,\xi)=m\geq 1.$

Now, we complete the family $(L_1,\dots,L_m)$ into $(L_1,\dots,L_3)$ so that this 
last family is a basis of $(\RR^3)^*$. We then get
$$\Imm \phi_j(\theta)=\kappa_j\left(L(\theta)+\sum_{\substack{1\leq k\leq m\\ k\leq l\leq 3}}a_{j,k,l}L_kL_l+\sum_{m+1\leq k\leq l\leq 3}a_{j,k,l}L_kL_l\right)+O\left(|\theta_1|^3+|\theta_2|^3+|\theta_3|^3\right).$$

We then consider the quadratic form 
$$R=\kappa_2\left(\sum_{m+1\leq k\leq l\leq 3}a_{1,k,l}L_kL_l\right)-\kappa_1\left(\sum_{m+1\leq k\leq l\leq 3}a_{2,k,l}L_kL_l\right)$$
and we define 
$$r(\phi,I,\xi)=\textrm{signature}(R).$$
This last quantity does not depend on the way we define the linear forms $L_i$. 
Indeed, let us assume that $m=1$ which is the most difficult case. If we replace $L_i$ by $L'_i$, we may write for $k=1,2,3$
$$L_k=\beta_{1,k} L'_1+\beta_{2,k} L'_2+\beta_{3,k} L'_3$$
where the matrix
$$B=\begin{pmatrix}
\beta_{2,2}&\beta_{2,3}\\
\beta_{3,2}&\beta_{33}
\end{pmatrix}$$
is invertible since $\textrm{span}(L_1)=\textrm{span}(L'_1).$ Writing $R=\sum_{j,k}c_{j,k}L_jL_k$ with a symmetric matrix $C=(c_{j,k})$ and the corresponding $R'=\sum_{j,k}c'_{j,k}L'_jL'_k$ with a symmetric
matrix $C'=(c'_{j,k})$, then $C'=BCB^T,$ which proves the claim.

\smallskip

The integers $s(\phi,I,\xi)$ and $r(\phi,I,\xi)$  we have just introduced will be the keys to understand the behaviour of $\phi$ at a boundary point of order $2$, when the first derivatives are linearly dependent.

\begin{theorem}\label{thm:main}
Let $\phi:\DD^3\to\DD^3$ be holomorphic such that $\phi$ extends to a $\mathcal C^3$-smooth function on $\overline{\DD}^3$. Then $C_\phi$ is bounded on $H^2(\DD^3)$ if and only if $\phi$ satisfies the following properties:
\begin{enumerate}
\item for any $\xi\in\TT^3$ such that $\phi(\xi)\in \TT^3,$ $\mathrm d\phi(\xi)$ is invertible;
\item for any $I\subset\{1,2,3\}$, $I=\{i_1,i_2\},$ such that $\phi_I(\xi)\in\TT^2$, either
\begin{enumerate}
\item $\nabla\phi_{i_1}(\xi)$ and $\nabla\phi_{i_2}(\xi)$ are linearly independent
\item or $\nabla\phi_{i_1}(\xi)$ and $\nabla\phi_{i_2}(\xi)$ are linearly dependent, 
$s(\phi,I,\xi)=3$
\item or $\nabla\phi_{i_1}(\xi)$ and $\nabla\phi_{i_2}(\xi)$ are linearly dependent, 
$s(\phi,I,\xi)=2$ and $r(\phi,I,\xi)=(1,0)$ or $(0,1)$
\item or $\nabla\phi_{i_1}(\xi)$ and $\nabla\phi_{i_2}(\xi)$ are linearly dependent, 
$s(\phi,I,\xi)=1$ and $r(\phi,I,\xi)=(2,0)$ or $(0,2)$.
\end{enumerate}
\end{enumerate}
\end{theorem}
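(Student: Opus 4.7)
The plan is to reduce boundedness of $C_\phi$ on $H^2(\DD^3)$ to a Carleson measure condition for the pullback measure $\mu_\phi = \sigma_3\circ\phi^{-1}$, and then verify that condition locally at each boundary point $\xi\in\TT^3$ where $\phi(\xi)\notin\DD^3$. On the polydisc, the Hardy-Carleson condition is the Chang open-set condition, which is substantially stronger than the single-window condition sufficient on $H^2(\mathbb B_d)$, and this is exactly what makes the tridisc analysis delicate. By the $\mathcal C^3$ hypothesis and compactness, there are only finitely many bad boundary points, and at each of them one classifies the local situation by the maximal $I\subseteq\{1,2,3\}$ with $\phi_I(\xi)\in\TT^{|I|}$.

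The cases $|I|=1$ and $|I|=3$ are the easier ones. For $|I|=1$, Julia-Caratheodory (Lemma \ref{lem:julia}) reduces the local problem to a one-variable composition operator, which is always bounded. For $|I|=3$, condition (1) is used: invertibility of $\mathrm d\phi(\xi)$ makes $\phi$ a quasi-biholomorphism of Shilov neighbourhoods near $\xi$, yielding $\sigma_3(\phi^{-1}(\prod S(\eta_i,\ovd_i)))\lesssim \prod \ovd_i$. Conversely, if $\mathrm d\phi(\xi)$ is singular, some tri-window has preimage volume exceeding $\prod\ovd_i$, and testing $C_\phi$ against normalised Szeg\H{o} kernels based at $\phi(\xi)$ along the degenerate direction produces a blow-up of norms. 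The case $|I|=2$ with linearly independent gradients reduces to a product of the previous two analyses, applied to $\phi_I$ and to the third, interior, component.

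The central difficulty, and the whole reason why the delicate invariants $s(\phi,I,\xi)$ and $r(\phi,I,\xi)$ appear in the statement, is the case $|I|=2$ with linearly dependent gradients. Using the normalisation of Section \ref{sec:statement} at $\xi=e$, $I=\{1,2\}$, the preimage $\phi^{-1}(S(1,\ovd)\times S(1,\ovd)\times\DD)$ is described, up to cubic errors, by
\begin{align*}
Q_j(\theta) &\lesssim \ovd,\qquad j=1,2, \\
\bigl|\kappa_j L(\theta)+\textrm{quadratic terms in }L_1,L_2,L_3\bigr| &\lesssim \ovd,\qquad j=1,2.
\end{align*}
Forming the combination $\kappa_2\Imm\phi_1-\kappa_1\Imm\phi_2$ cancels the common linear term $L$ and leaves the single quadratic constraint $|R(\theta)|\lesssim\ovd$. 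Working in coordinates dual to $(L_1,\ldots,L_3)$, the constraint $Q_1+Q_2\lesssim\ovd$ confines the first $s=s(\phi,I,\xi)$ coordinates to a window of side $\ovd^{1/2}$, while the remaining $3-s$ coordinates are constrained by $|R|\lesssim\ovd$ together with the linear condition coming from $L\in\vect(L_1,\ldots,L_s)$. A direct computation of the Lebesgue measure of such sub-level sets then delivers the Carleson-type bound $\sigma_3(\phi^{-1}(\cdots))\lesssim\ovd^2$ \emph{exactly} when $(s,\mathrm{sig}(R))$ falls into one of the cases (a)--(d) of the theorem; for every other combination the same computation yields a strictly larger volume.

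To finish, for the sufficiency direction I would assemble the local estimates over a finite partition of a neighbourhood of the bad boundary set, and verify Chang's open-set Carleson condition for $\mu_\phi$ on $\TT^3$; the subtlety here is to control overlaps of nearby Carleson windows, which is a covering argument. For the necessity direction, whenever a clause of (2) fails, the preceding volume calculation produces a set on which $\mu_\phi$ carries too much mass for its scale, and testing $C_\phi$ against a suitably normalised sequence of reproducing kernels of $H^2(\DD^3)$ concentrated in that set yields $\|C_\phi\|=\infty$. I expect the main obstacles to be (i) carrying out the volume estimates in the linearly-dependent $|I|=2$ case with sufficient precision to separate the sub-cases (a)--(d) from the forbidden signatures, and (ii) upgrading the resulting single-window estimates to the full Chang open-set condition required for $H^2(\DD^3)$.
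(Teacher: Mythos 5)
Your outline identifies the right local invariants and the right algebraic manoeuvre in the hard case (cancelling the common linear part via $\kappa_2\Imm\phi_1-\kappa_1\Imm\phi_2$ to isolate the form $R$), but the sufficiency direction has a genuine gap: you propose to finish by verifying Chang's open-set Carleson condition for $\mu_\phi=\sigma_3\circ\phi^{-1}$, and you yourself flag ``upgrading the single-window estimates to the full Chang condition'' as an open obstacle. That obstacle is not a covering or overlap issue that can be absorbed into a partition argument --- Chang's characterization requires control of $\mu_\phi$ over regions lying above \emph{arbitrary} open subsets of $\TT^3$, and it is precisely the point of the polydisc theory that such control does not follow from rectangle estimates. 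The paper avoids this entirely by a different device (Lemma \ref{lem:continuityh2}, from \cite{BAYPOLY}): one proves the \emph{box} estimate for the weighted Bergman volumes, $V_\beta\bigl(\phi^{-1}(S(\eta,\ovd))\bigr)\leq C\delta_1^{2+\beta}\delta_2^{2+\beta}\delta_3^{2+\beta}$, \emph{uniformly} in $\beta\in(-1,0]$, and lets $\beta\to -1$; the uniform Bergman estimates imply $H^2$-boundedness without ever touching the open-set condition. Without this (or an equivalent substitute) your sufficiency proof does not close.

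Two further points. First, the assertion that there are ``only finitely many bad boundary points'' is false: for a component such as $\phi_1(z)=z_1z_2$ the set $\{\xi\in\TT^3:\phi_1(\xi)\in\TT\}$ is all of $\TT^3$, and in the paper's first example the set with $\phi_I(\xi)\in\TT^2$ is a whole circle. One needs local volume estimates with constants uniform on a neighbourhood of each bad point, followed by a compactness covering of the (generally positive-dimensional) bad sets $A_1,A_2,A_3$, with a hierarchy of margins $\veps_1>\veps_2>\veps_3$ to handle the components that stay away from $\TT$. Second, the ``direct computation of the Lebesgue measure of sub-level sets'' of the quadratic model is not quite enough: the relevant coordinates live at scales $\delta^{1/2}$ and $\delta^{1/3}$, at which the $O(|\theta|^3)$ remainders are of the same order $\delta$ as the window itself, so they cannot simply be discarded. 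The paper absorbs them by successive exact changes of variables (straightening $\Imm\phi_1$, then applying the parametrized Morse lemma, Lemma \ref{lem:parametrizedmorse}, to the real parts or to the combination $R$), reducing to a genuine normal form $\pm v_2^2\pm v_3^2+h(v_1)$ before applying Fubini and the annulus estimate. Your necessity sketch (reproducing-kernel testing on sets of excessive pullback mass) is essentially equivalent to the paper's use of the easy half of Chang's theorem and is fine in spirit, though note that in several of the forbidden cases the excess is only logarithmic, $-\delta^2\log\delta$ versus $\delta^2$, so the lower bounds must be computed with some care.
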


Observe that conditions (b), (c) or (d) imply that, for any $j\in\{1,2,3\}$ and any $k\in\{1,2\},$
$\frac{\partial\phi_{i_k}}{\partial z_j}(\xi)\neq 0.$ Otherwise $\phi_{i_1}$ and $\varphi_2$ would not depend on the same variable.
But setting $m=s(\phi,I,\xi)$ and $(a,b)=r(\phi,I,\xi)$, we would have $m+a+b\leq 2,$ which is not the case here.

Therefore, as a corollary of Theorem \ref{thm:main} and of \cite[Theorem 10]{Ko22}, we get:
\begin{corollary}
 Let $\phi\in\mathcal O(\DD^3,\DD^3)\cap\mathcal C^3(\overline{\DD^3})$. If $C_\phi$ is continuous on $H^2(\DD^3),$
 then it is continuous on $A^2(\DD^3)$.
\end{corollary}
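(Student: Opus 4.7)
The plan is to invoke Theorem~\ref{thm:main} together with Kosi\'nski's characterization of boundedness on $A^2(\DD^3)$ from \cite[Theorem 10]{Ko22}. As emphasised in the introduction, the latter criterion involves only first-order derivatives of $\phi$ at boundary points, so the task amounts to checking that conditions (1) and (2) of Theorem~\ref{thm:main} imply the first-order criterion of \cite[Theorem 10]{Ko22} at every $\xi\in\TT^3$ with $\phi(\xi)\notin\DD^3$.

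First I would perform a case analysis on the maximal $I\subset\{1,2,3\}$ such that $\phi_I(\xi)\in\TT^{|I|}$. For $|I|=3$, the required first-order invertibility of $d\phi(\xi)$ is precisely condition (1) of Theorem~\ref{thm:main}. For $|I|=1$, the Kosi\'nski condition reduces to the one-variable Julia--Carath\'eodory theorem (Lemma~\ref{lem:julia}) applied to $\phi_{i_1}$, which is automatic given the $\mathcal C^3$-smoothness of $\phi$. For $|I|=2$, alternative (a) of condition (2) gives Kosi\'nski's first-order condition directly through the linear independence of $\nabla\phi_{i_1}(\xi)$ and $\nabla\phi_{i_2}(\xi)$.

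The remaining sub-cases are (b), (c), (d), where the gradients are linearly dependent. Here I would invoke the observation stated just before the corollary: each of the three sub-cases forces $m+a+b=3$, where $m=s(\phi,I,\xi)$ and $(a,b)=r(\phi,I,\xi)$, and the dimensional argument sketched in that observation then shows that all partial derivatives $\frac{\partial \phi_{i_k}}{\partial z_j}(\xi)$ are nonzero. Indeed, if some $\frac{\partial \phi_{i_k}}{\partial z_j}(\xi)$ vanished, proportionality of the gradients would force the corresponding partial of the other component to vanish as well, the linear form $L$ would then drop one variable, and the dimension count on the forms $L_1,\dots,L_m$ together with the quadratic form $R$ would yield $m+a+b\leq 2$, a contradiction. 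This non-vanishing is exactly the first-order condition required by Kosi\'nski's theorem in the linearly dependent case.

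The only real obstacle is bookkeeping: precisely aligning the formulation of the first-order condition in \cite[Theorem 10]{Ko22} with the non-vanishing of partial derivatives yielded by the observation, and treating uniformly the three linearly-dependent sub-cases (b), (c), (d). Once this match is made, the corollary follows formally by reading off Kosi\'nski's condition from each alternative of Theorem~\ref{thm:main} and concluding that $C_\phi$ is bounded on $A^2(\DD^3)$.
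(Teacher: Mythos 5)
Your proposal is correct and follows essentially the same route as the paper: the paper derives the corollary directly from Theorem~\ref{thm:main}, \cite[Theorem 10]{Ko22}, and the observation stated just before the corollary that in cases (b), (c), (d) the identity $m+a+b=3$ forces all partial derivatives $\frac{\partial\phi_{i_k}}{\partial z_j}(\xi)$ to be nonzero. Your case analysis on $|I|$ merely makes explicit the bookkeeping that the paper leaves implicit.
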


The converse is false, a counterexample being given by $\phi(z)=(z_1z_2,z_3,z_1z_2z_3,0)$ (see \cite{Ko22}).

\section{Preliminaries}

\subsection{How to prove/disprove continuity on $H^2(\DD^d)$}\label{sec:howto}

To prove and disprove continuity on $H^2(\DD^d)$, we shall use Carleson boxes conditions. Since the results of Chang \cite{Ch79}, it is known that Carleson measures of 
the Hardy space of the polydisc are not easy to describe geometrically since we cannot restrict ourselves to estimate the volume of rectangles.
Nevertheless, going through Bergman spaces, we can still get tractable sufficient conditions. We write the results that we shall use for the general polydisc $\DD^d,$ $d\geq 1$.

For any $\eta\in\TT^d$ and any $\ovd=(\delta_1,\dots,\delta_d)\in (0,2]^d,$
we define the Carleson box at $\eta$ with radius $\ovd$ by
$$S(\eta,\ovd)=\big\{(z_1,\dots,z_d)\in\DD^d:\ |z_k-\eta_k|\leq \delta_k,\ 1\leq k\leq d\big\}.$$
We shall use sometimes Carleson windows instead of Carleson boxes. For $\eta=e^{i\theta}\in\TT^d$ and $\ovd\in(0,2]^d$, 
the Carlson window at $\eta$ with radius $\ovd$ is 
$$W(\eta,\ovd)=\big\{z=(r_1e^{it_1},\dots,r_de^{it_d})\in \DD^d:\ 1-\delta_k\leq r_k\leq 1\textrm{ and }|\theta_k-t_k|\leq \delta_k,\ 1\leq k\leq d\big\}.$$
Since there exists $c>0$ such that, for all $\eta\in\TT^d$ and all $\ovd\in(0,2]^d,$
$$S(\eta,\ovd)\subset W(\eta,\ovd)\subset S(\eta,c\ovd),$$
in all results that we will stay, we may replace $S(\eta,\ovd)$ by $W(\eta,\ovd),$ and conversely.

 We use $dA$ to denote the normalized area measure on the unit disk $\mathbb D$ and for $\beta>-1$, we write
$$dA_\beta(z)=(\beta+1)(1-|z|^2)^{\beta}dA(z).$$
We also let 
$$dV_\beta(z)=dA_\beta(z_1)\dots dA_\beta(z_d),$$
the product measure on $\DD^d.$

\begin{lemma}\label{lem:continuityh2}(see \cite{BAYPOLY})
Let $\phi:\DD^d\to\DD^d$ be holomorphic. Assume that there exist $C>0$ and $\veps>0$ such that, for all $\ovd\in (0,\veps]^d,$ for all $\eta\in\TT^d$, for all $\beta\in(-1,0]$, 
$$V_\beta\left(\phi^{-1}(S(\eta,\ovd))\right)\leq C\delta_1^{2+\beta}\cdots\delta_d^{2+\beta}.$$
Then $C_\phi$ is continuous on $H^2(\DD^d)$. 
\end{lemma}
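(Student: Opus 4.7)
The plan is to reduce Hardy-space continuity to a Carleson embedding for the weighted Bergman spaces $A^2_\beta(\DD^d)$, then recover the $H^2$ estimate by sending $\beta\to -1^+$. This detours around Chang's obstruction---that Carleson measures for $H^2(\DD^d)$ admit no clean geometric description---by working at the Bergman level, where the box condition is decisive.

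The first ingredient is the Carleson embedding for $A^2_\beta(\DD^d)$. Because the Bergman kernel of the polydisc factors as $K_\beta(z,w)=\prod_{j=1}^d(1-\overline{w_j}z_j)^{-(2+\beta)}$, a Schur-type test (or an atomic decomposition based on a Bergman tree in each coordinate) shows that any positive Borel measure $\mu$ on $\DD^d$ with $\mu(S(\eta,\ovd))\leq A\,\delta_1^{2+\beta}\cdots\delta_d^{2+\beta}$ for all $\eta\in\TT^d$ and small $\ovd$ yields a bounded embedding $A^2_\beta(\DD^d)\hookrightarrow L^2(\mu)$, with operator norm controlled by $A$ and by constants that remain bounded as $\beta$ ranges over $(-1,0]$. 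Applied to the pullback $\mu_\beta:=V_\beta\circ\phi^{-1}$, this yields, by change of variables,
$$\|C_\phi f\|^2_{A^2_\beta(\DD^d)}=\int_{\DD^d}|f|^2\,d\mu_\beta\leq C'\,\|f\|^2_{A^2_\beta(\DD^d)},$$
with $C'$ independent of $\beta\in(-1,0]$.

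The second ingredient is the passage $\beta\to -1^+$. A direct computation gives $\|z^\alpha\|^2_{A^2_\beta(\DD^d)}=\prod_{j=1}^d \frac{\alpha_j!\,\Gamma(\beta+2)}{\Gamma(\alpha_j+\beta+2)}$, which is decreasing in $\beta$ and tends to $1=\|z^\alpha\|^2_{H^2(\DD^d)}$ as $\beta\to -1^+$. By Parseval and monotone convergence this extends to $\|g\|^2_{A^2_\beta(\DD^d)}\uparrow \|g\|^2_{H^2(\DD^d)}$ for every $g\in H^2(\DD^d)$. Applied to $g=f$ and $g=f\circ\phi$, and letting $\beta\to -1^+$ in the previous display, one obtains $\|f\circ\phi\|^2_{H^2(\DD^d)}\leq C'\,\|f\|^2_{H^2(\DD^d)}$, which is the desired boundedness of $C_\phi$ on $H^2(\DD^d)$.

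The main obstacle is the first ingredient: tracking the dependence on $\beta$ in the Bergman Carleson embedding on the polydisc. The Schur weights and the Forelli--Rudin-type constants underlying the standard proof involve gamma-function quantities that degenerate at $\beta=-1$, and one must check that their product with the box-condition constant remains uniformly bounded on $(-1,0]$. Once this uniformity is established, the remainder of the argument is essentially soft and matches the approach taken in \cite{BAYPOLY}.
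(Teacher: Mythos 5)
Your proposal is correct and follows essentially the same route as the paper, which itself defers this lemma to \cite{BAYPOLY}: the uniform-in-$\beta$ Carleson embedding you invoke is precisely the paper's Lemma \ref{lem:precisecarlesonembedding} (Proposition 9.3 of \cite{BAYPOLY}), and the passage $\beta\to-1^+$ via monotonicity of the monomial norms is the same limiting argument the author carries out in the proof of Theorem \ref{thm:compactnessh2}. The only point left implicit is the routine covering step upgrading the box condition from $\ovd\in(0,\veps]^d$ to all $\ovd\in(0,2]^d$, which is harmless.
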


Regarding necessary conditions, we can use the following result, which is the easy part of \cite{Ch79}.

\begin{lemma}\label{lem:discontinuityh2}
Let $\phi:\DD^d\to\DD^d$ be holomorphic and belonging to $\mathcal C(\overline{\DD}^d)$ Assume that $C_\phi$ is continuous on $H^2(\DD^d)$. Then there exists
$C>0$ such that, for all $\eta\in\TT^d$, for all $\ovd\in]0,2]^d,$
\begin{equation}\label{eq:necessarycondition}
 \sigma_d\left(\left\{e^{i\theta}\in\TT^d:\ \phi(\theta)\in S(\eta,\ovd)\right\}\right)\leq C\delta_1\cdots\delta_d.
\end{equation}
\end{lemma}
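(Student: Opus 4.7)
The plan is to test the boundedness of $C_\phi$ against suitably normalized reproducing kernels of $H^2(\DD^d)$. For fixed $\eta\in\TT^d$ and $\ovd\in(0,2]^d$, I set $a_k=(1-\delta_k)\eta_k$ and introduce
$$f_{\eta,\ovd}(z)=\prod_{k=1}^{d}\frac{(1-|a_k|^2)^{1/2}}{1-\overline{a_k}\,z_k}.$$
As a tensor product of $L^2$\nobreakdash-normalized one-variable Szeg\H{o} kernels evaluated at the points $a_k\in\DD$, this function satisfies $\|f_{\eta,\ovd}\|_{H^2(\DD^d)}=1$.

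Next I would prove a pointwise lower estimate on the Carleson box. For $z\in S(\eta,\ovd)$, setting $\zeta_k=\overline{\eta_k}z_k$ so that $|\zeta_k-1|\leq\delta_k$, the triangle inequality gives
$$|1-\overline{a_k}z_k|=|1-(1-\delta_k)\zeta_k|\leq|1-\zeta_k|+\delta_k|\zeta_k|\leq 2\delta_k.$$
Since $1-|a_k|^2$ is comparable to $\delta_k$, this yields $|f_{\eta,\ovd}(z)|\geq c\prod_{k=1}^{d}\delta_k^{-1/2}$ for a universal constant $c>0$.

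Now the continuity of $C_\phi$ on $H^2(\DD^3)$ together with $\phi\in\mathcal C(\overline{\DD^d})$ (which allows us to read $C_\phi f_{\eta,\ovd}$ on $\TT^d$ through the continuous extension of $\phi$) gives
$$\|C_\phi\|^2\geq\|C_\phi f_{\eta,\ovd}\|_{H^2}^2=\int_{\TT^d}|f_{\eta,\ovd}(\phi(e^{i\theta}))|^2\,d\sigma_d(\theta)\geq c^2\prod_{k=1}^{d}\delta_k^{-1}\cdot\sigma_d\bigl(\{e^{i\theta}\in\TT^d:\phi(e^{i\theta})\in S(\eta,\ovd)\}\bigr),$$
where in the last inequality I only integrate over the preimage set (note that on this set $\phi(e^{i\theta})$ lies in the open polydisc, so $f_{\eta,\ovd}\circ\phi$ is genuinely defined and the lower bound applies). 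Rearranging gives \eqref{eq:necessarycondition} with $C=\|C_\phi\|^2/c^2$.

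The proof is essentially mechanical; the only step requiring a short computation is the lower bound for $|f_{\eta,\ovd}|$ on $S(\eta,\ovd)$, which is the elementary triangle inequality above. There is no genuine obstacle here, which is consistent with the statement being labelled as the easy direction of Chang's theorem \cite{Ch79}.
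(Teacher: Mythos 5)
Your proof is correct and is essentially the argument the paper has in mind: the paper simply cites the ``easy part'' of Chang's Carleson-measure theorem, whose standard proof is exactly your test against the normalized product Szeg\H{o} kernels at the points $(1-\delta_k)\eta_k$ (and the passage from $\|C_\phi f_{\eta,\ovd}\|_{H^2}$ to the boundary integral is legitimate here because $f_{\eta,\ovd}$ is holomorphic across $\overline{\DD}^d$, so $f_{\eta,\ovd}\circ\phi$ is continuous up to $\TT^d$). The only cosmetic point is that $1-|a_k|^2=\delta_k(2-\delta_k)$ degenerates as $\delta_k\to 2$, so you should first reduce to $\delta_k\le 1$ (covering $\DD$ by finitely many boxes $S(\zeta,1)$, which only changes the constant), after which your comparability claim and the rest of the computation go through verbatim.
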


\subsection{Volume estimates}

As pointed out by Lemma \ref{lem:continuityh2} and \ref{lem:discontinuityh2}, we shall need volume estimates. The forthcoming lemmas summarize what we need.
We start by minorizing the volume of some subsets of $\RR^2.$

\begin{lemma}\label{lem:volumeestimate1}
There exists $C>0$ such that, for all $\delta\in(0,1)$, 
$$\lambda_2\left(\left\{(x,y)\in\mathbb R^2:\ |x|\leq \delta^{1/3},\ |y|\leq \delta^{1/2},\ |xy|\leq \delta\right\}\right)\geq -C\delta\log(\delta).$$
\end{lemma}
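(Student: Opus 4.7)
The set in question is symmetric under $(x,y)\mapsto(\pm x,\pm y)$, so I would first reduce to the first quadrant and work with
$$E^+=\bigl\{(x,y)\in\mathbb R^2:\ 0\leq x\leq\delta^{1/3},\ 0\leq y\leq\delta^{1/2},\ xy\leq\delta\bigr\},$$
noting that $\lambda_2(E)=4\lambda_2(E^+)$. I would then compute $\lambda_2(E^+)$ by integrating over $x$.

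For fixed $x\in[0,\delta^{1/3}]$, the admissible $y$-values lie in $[0,\min(\delta^{1/2},\delta/x)]$, so the key observation is to locate the crossover: $\delta/x\geq\delta^{1/2}$ if and only if $x\leq\delta^{1/2}$, and since $\delta\in(0,1)$ we have $\delta^{1/2}\leq\delta^{1/3}$, so the split $[0,\delta^{1/2}]\cup[\delta^{1/2},\delta^{1/3}]$ is well defined. On the first piece the binding constraint is $y\leq\delta^{1/2}$; on the second piece it is $y\leq\delta/x$. I would then write
$$\lambda_2(E^+)=\int_0^{\delta^{1/2}}\delta^{1/2}\,dx+\int_{\delta^{1/2}}^{\delta^{1/3}}\frac{\delta}{x}\,dx=\delta+\delta\bigl(\tfrac13-\tfrac12\bigr)\log\delta=\delta-\tfrac16\,\delta\log\delta.$$

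Multiplying by $4$ gives $\lambda_2(E)=4\delta-\tfrac23\delta\log\delta$. Since $-\delta\log\delta>0$ for $\delta\in(0,1)$ and $4\delta\geq 0$, we immediately have
$$\lambda_2(E)\geq-\tfrac23\delta\log\delta\geq-C\delta\log\delta$$
for any $C\geq 2/3$, which concludes the argument. There is no real obstacle here: the only thing to be careful about is correctly identifying the crossover point $x=\delta^{1/2}$ between the two binding constraints; once the integral is split accordingly the estimate follows from a direct computation, and the logarithmic factor arises precisely from integrating $\delta/x$ on the interval $[\delta^{1/2},\delta^{1/3}]$.
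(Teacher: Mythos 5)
Your computation is correct and follows essentially the same route as the paper: both proofs obtain the logarithm from $\int \delta/t\,dt$ over an interval with endpoints $\delta^{1/2}$ and $\delta^{2/3}$ (the paper simply lower-bounds the area by the sub-region $y\in[\delta^{2/3},\delta^{1/2}]$, $x\in[0,\delta/y]$ instead of evaluating it exactly). One tiny slip at the end: since $-\delta\log\delta>0$, the inequality $-\tfrac23\delta\log\delta\geq -C\delta\log\delta$ requires $C\leq 2/3$, not $C\geq 2/3$; taking $C=2/3$ the conclusion stands.
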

\begin{proof}
Let us consider $y\in[\delta^{2/3},\delta^{1/2}]$. Then any $x\in \left[0,\frac{\delta}{y}\right]$ satisfies $|xy|\leq\delta$ and $|x|\leq \delta \cdot \delta^{-2/3}=\delta^{1/3}.$ Therefore, the measure of the set we are looking for is greater than 
$$\int_{\delta^{2/3}}^{\delta^{1/2}}\frac{\delta}ydy=-\frac{1}6 \delta\log(\delta).$$
\end{proof}

\begin{lemma}\label{lem:volumeestimate2}
Let $a,b>0$. There exists $C>0$ such that, for all $\delta\in(0,1),$
$$\lambda_2\left(\left\{(x,y)\in \left[-\delta^{1/3},\delta^{1/3}\right]^2:\ |a x^2-b y^2|\leq \delta\right\}\right)\geq -C \delta\log(\delta).$$
\end{lemma}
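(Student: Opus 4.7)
The strategy mirrors that of Lemma~\ref{lem:volumeestimate1}: I would exhibit an explicit subregion whose area is already of order $-\delta\log\delta$, exploiting the fact that the set concentrates along the two lines $\sqrt{a}\,x=\pm\sqrt{b}\,y$, with transverse width of order $\delta/|x|$, so that integrating over $x$ produces the logarithm.

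First I would perform the change of variables $u=\sqrt{a}\,x$, $v=\sqrt{b}\,y$, whose constant Jacobian $1/\sqrt{ab}$ simply absorbs into the final constant. The problem then reduces to showing
\[
\lambda_2\bigl(\{(u,v):\ |u|\le\sqrt{a}\,\delta^{1/3},\ |v|\le\sqrt{b}\,\delta^{1/3},\ |u^2-v^2|\le\delta\}\bigr)\gtrsim -\delta\log\delta,
\]
and by the symmetries $u\mapsto -u$, $v\mapsto -v$ it suffices to work in the open first quadrant and multiply the estimate by $4$.

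In that quadrant, I would fix $u\ge\sqrt\delta$ and note that $|u^2-v^2|\le\delta$ is equivalent to $v\in[\sqrt{u^2-\delta},\sqrt{u^2+\delta}]$, whose length is
\[
\frac{2\delta}{\sqrt{u^2+\delta}+\sqrt{u^2-\delta}}\ \ge\ \frac{\delta}{2u},
\]
since $u^2+\delta\le 2u^2$ in that regime. Picking $d>0$ small enough (depending only on $a,b$), the full interval $[\sqrt{u^2-\delta},\sqrt{u^2+\delta}]$ remains inside $[0,\sqrt{b}\,\delta^{1/3}]$ for every $u\in[\sqrt\delta,\,d\delta^{1/3}]$, so no truncation by the square is needed, and Fubini yields
\[
\text{area}\ \ge\ \int_{\sqrt\delta}^{d\delta^{1/3}}\frac{\delta}{2u}\,du\ =\ \frac{\delta}{2}\log d\ -\ \frac{\delta}{12}\log\delta,
\]
which is at least $-C'\delta\log\delta$ for all sufficiently small $\delta$. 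For $\delta$ bounded away from $0$ the inequality is trivial, since $-\delta\log\delta$ stays bounded while the set already contains a fixed neighbourhood of the origin; enlarging $C$ then handles every $\delta\in(0,1)$.

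The only real obstacle is the geometric bookkeeping that ensures the $v$-interval $[\sqrt{u^2-\delta},\sqrt{u^2+\delta}]$ sits inside the admissible square $[0,\sqrt{b}\,\delta^{1/3}]^2$; this is precisely what forces the choice of the small parameter $d$, but once $d$ is fixed the rest is a direct computation essentially identical in spirit to Lemma~\ref{lem:volumeestimate1}.
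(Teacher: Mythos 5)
Your argument is correct, but it takes a more computational route than the paper. The paper normalizes $a=b=1$ and applies the linear substitution $x'=x-y$, $y'=x+y$, which turns $|x^2-y^2|\le\delta$ into $|x'y'|\le\delta$ and lets it quote Lemma \ref{lem:volumeestimate1} verbatim; you instead slice the hyperbolic annulus directly, bounding the length of the fibre $\{v:\ |u^2-v^2|\le\delta\}$ from below by $\delta/(2u)$ for $u\in[\sqrt\delta,\,d\delta^{1/3}]$ and integrating. The two proofs are the same in spirit (both reduce to $\int \delta\,du/u$ over a range of length ratio $\delta^{-1/6}$, which is where the logarithm comes from), but yours is self-contained and does not need the rotation trick, at the cost of the extra bookkeeping that keeps the fibre inside the square $[-\delta^{1/3},\delta^{1/3}]^2$ --- which you handle correctly by choosing $d$ small in terms of $a,b$ and restricting to small $\delta$. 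One small slip in the final step: since $-C\delta\log\delta>0$ and grows with $C$, the lower bound becomes \emph{harder} to satisfy as $C$ increases, so to cover the regime where $\delta$ is bounded away from $0$ you must take $C$ to be the \emph{minimum} of the constants from the two regimes (shrink $C$, not enlarge it); with that one-word correction the proof is complete.
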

\begin{proof}
We may assume $a=b=1$. Setting $x'=x-y$, $y'=x+y$, we get
$$\left|x^2-y^2\right|\leq \delta\iff |x'y'|\leq \delta.$$
Hence the result comes immediately from Lemma \ref{lem:volumeestimate1}.
\end{proof}

We now majorize the measure of some subsets of $\mathbb R^2$.

\begin{lemma}\label{lem:volumeestimate3}
For all $\delta>0$, for all $a\in\mathbb R,$
$$\lambda_2\left(\left\{(x,y)\in\RR^2:\ |x^2+y^2-a|<\delta\right\}\right)\leq 2\pi\delta.$$
\end{lemma}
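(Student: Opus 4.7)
The plan is to observe that the set in question is, up to empty cases, an annulus (possibly degenerating to a disc), and then to compute its area directly. The statement has nothing to do with the rest of the volume-estimate machinery; it is a one-line observation once the geometry is recognized.

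First I would split into three cases according to the value of $a$. If $a + \delta \leq 0$, then $x^2 + y^2 \geq 0 > a + \delta$ for every $(x,y)$, so the set is empty and its measure is $0 \leq 2\pi\delta$. If $a - \delta \leq 0 < a + \delta$, meaning $|a| < \delta$, then the condition $|x^2 + y^2 - a| < \delta$ reduces to $x^2 + y^2 < a + \delta$, which defines an open disc of radius $\sqrt{a+\delta}$. Its area equals $\pi(a+\delta) \leq 2\pi\delta$. Finally, if $a - \delta > 0$, the set is the open annulus
$$\bigl\{(x,y):\ a-\delta < x^2+y^2 < a+\delta\bigr\},$$
whose area is $\pi(a+\delta) - \pi(a-\delta) = 2\pi\delta$.

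In every case the bound $2\pi\delta$ holds, which completes the argument. There is no genuine obstacle here; the only point that requires (trivial) attention is to handle the degenerate case $a \leq \delta$ separately so that one does not write $\sqrt{a-\delta}$ for a negative argument.
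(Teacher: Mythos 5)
Your proof is correct and follows essentially the same case analysis as the paper: empty set, disc, or annulus, with the area computed directly in each case. (The parenthetical claim that $a-\delta\leq 0<a+\delta$ means $|a|<\delta$ is a harmless slip --- it actually means $-\delta<a\leq\delta$ --- but the bound $\pi(a+\delta)\leq 2\pi\delta$ still holds there.)
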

\begin{proof}
If $a<-\delta$, the set is empty. If $a\in[-\delta,\delta]$, the set is the disc of center $0$ and radius $\sqrt{\delta+a}$, its surface is $\pi(\delta+a)\leq 2\pi\delta$. If $a\geq\delta$, the set is the corona 
$$a-\delta\leq x^2+y^2\leq a+\delta;$$
its surface is $\pi(a+\delta)-\pi(a-\delta)=2\pi\delta.$
\end{proof}

Fubini's lemma will be useful to control the $V_\beta$ volume of some subsets of $\DD^d.$

\begin{lemma}\label{lem:volumeestimate4}
 Let $E$ be a measurable subset of $\RR^d,$ let $\ovd\in(0,1]^3$. There exists $C>0$ such that, for all $\beta\in(-1,0],$
 $$V_\beta\left(\left\{z=\big((1-\rho_k)e^{i\theta_k}\big)_{k=1}^d\in\DD^d:\ \theta\in E,\ 0\leq \rho_k\leq \delta_k\right\}\right)\leq C \delta_1^{1+\beta}\cdots\delta_d^{1+\beta}\lambda_d(E).$$
\end{lemma}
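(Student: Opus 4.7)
The plan is to reduce to a product computation via Fubini, after writing the measure $dA_\beta$ in polar-type coordinates. Parameterize a point of $\DD$ as $z = (1-\rho)e^{i\theta}$ with $\rho \in [0,1]$ and $\theta$ in an interval of length $2\pi$. Then $1-|z|^2 = \rho(2-\rho)$ and the normalized area element becomes
$$dA(z) = \frac{1}{\pi}(1-\rho)\,d\rho\,d\theta,$$
so that
$$dA_\beta(z) = \frac{\beta+1}{\pi}\,\rho^\beta (2-\rho)^\beta (1-\rho)\,d\rho\,d\theta.$$
For $\beta \in (-1,0]$ and $\rho\in[0,1]$, we have $(2-\rho)^\beta \leq 1$ (since $2-\rho\geq 1$) and $(1-\rho)\leq 1$, which gives the clean upper bound $dA_\beta(z) \leq \frac{\beta+1}{\pi}\rho^\beta\,d\rho\,d\theta$.

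Next, the product measure $dV_\beta = dA_\beta(z_1)\cdots dA_\beta(z_d)$ pulls back, under the parameterization $z_k = (1-\rho_k)e^{i\theta_k}$, to
$$\prod_{k=1}^d \frac{\beta+1}{\pi}\rho_k^\beta (2-\rho_k)^\beta (1-\rho_k)\,d\rho_k\,d\theta_k,$$
and the set whose $V_\beta$-measure we wish to estimate corresponds exactly to the domain $\theta \in E$, $0\leq \rho_k \leq \delta_k$. Applying Fubini with the preceding pointwise bound yields
$$V_\beta(\cdots) \leq \prod_{k=1}^d\frac{\beta+1}{\pi}\int_0^{\delta_k}\rho_k^\beta\,d\rho_k \cdot \int_E d\theta = \frac{1}{\pi^d}\,\delta_1^{1+\beta}\cdots\delta_d^{1+\beta}\,\lambda_d(E),$$
using that $(\beta+1)\int_0^{\delta_k}\rho_k^\beta\,d\rho_k = \delta_k^{1+\beta}$. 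This is the claimed bound, with absolute constant $C = 1/\pi^d$, uniform in $\beta \in (-1,0]$.

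No step here is a genuine obstacle; the only thing to check with a little care is that the factor $\beta+1$ arising from the definition of $dA_\beta$ cancels exactly with the $\frac{1}{\beta+1}$ produced by integrating $\rho^\beta$, so the final constant does not blow up as $\beta \to -1^+$. Likewise one must verify that $(2-\rho)^\beta \leq 1$ on the relevant range, which relies on $\ovd \in (0,1]^d$ (so that $\rho_k \leq 1$ and thus $2-\rho_k \geq 1$). Both observations are immediate, so the lemma follows by a single Fubini application.
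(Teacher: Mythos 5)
Your proof is correct and follows essentially the same route as the paper: a single application of Fubini's theorem combined with the one-dimensional radial estimate $(\beta+1)\int_0^{\delta}\rho^\beta\,d\rho=\delta^{1+\beta}$ (equivalently, the paper's $\int_{1-\delta}^1(\beta+1)r(1-r^2)^\beta\,dr\leq C\delta^{1+\beta}$), with the key point in both cases being that the factor $\beta+1$ keeps the constant uniform as $\beta\to-1^+$. Your version simply makes the change of variables $r=1-\rho$ and the bound $(2-\rho)^\beta(1-\rho)\leq 1$ explicit.
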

\begin{proof}
 This follows from Fubini's theorem and from the inequality
 $$\int_{1-\delta}^1 (\beta+1)r(1-r^2)^\beta dr\leq C\delta^{1+\beta}$$
 for some constant $C>0$ which does not depend on $\delta\in[0,1]$ and on $\beta\in(-1,0].$
\end{proof}

\subsection{Geometric lemmas}

We will also need two geometric lemmas. The first one is the parametrized Morse lemma (see for instance \cite[Sec.~4.44]{BG92}). 
\begin{lemma}\label{lem:parametrizedmorse}
 Let $f:\mathbb R^d\times\mathbb R^{n-d}\to\mathbb R$ be a $\mathcal C^3$-function such that $f(0)=0$, $\mathrm d_yf(0)=0$ 
	and $\mathrm d^2_yf(0)$ is nonsingular. There exists a neighbourhood $\mathcal U$ of $0$ in $\RR^n,$ a diffeomorphism $\Gamma:\mathcal U\to\mathbb R^n,$ $(x,y)\mapsto (x,\gamma(x,y))$
	and a $\mathcal C^1$-map $h:\mathbb R^d\to\mathbb R$ such that, for any $(x,y)\in\mathcal U,$
	$$f(x,y)=\sum_{j=1}^{n-d}\pm \gamma_j(x,y)^2+h(x).$$
\end{lemma}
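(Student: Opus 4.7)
The plan is to follow the two-step strategy behind the classical parametrized Morse lemma: first straighten the family of critical points of $y\mapsto f(x,y)$, then perform a smoothly varying Morse diagonalization of the quadratic part in $y$.

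For the first step, I would apply the implicit function theorem to the $\mathcal C^2$-map $(x,y)\mapsto \mathrm d_y f(x,y)$. Since $\mathrm d_y f(0)=0$ and $\mathrm d^2_y f(0)$ is nonsingular, one obtains on a neighbourhood of $0$ in $\RR^d$ a $\mathcal C^2$-map $x\mapsto y(x)$, with $y(0)=0$, such that $\mathrm d_y f(x,y(x))=0$. Setting $h(x)=f(x,y(x))$ and
$$g(x,y)=f(x,y+y(x))-h(x),$$
one gets $g(x,0)=0$, $\mathrm d_y g(x,0)=0$, and $\mathrm d^2_y g(0,0)=\mathrm d^2_y f(0)$ nonsingular. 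Once $g$ is put in the form $\sum_{j=1}^{n-d}\pm\tilde\gamma_j(x,y)^2$, the original $f$ will take the desired form via $\gamma_j(x,y)=\tilde\gamma_j(x,y-y(x))$.

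The hard part is the second step, which is a parametrized version of the classical Morse lemma. I would begin with the Taylor expansion
$$g(x,y)=\sum_{i,j=1}^{n-d} a_{ij}(x,y)\,y_iy_j,\qquad a_{ij}(x,y)=\int_0^1(1-t)\,\frac{\partial^2 g}{\partial y_i\partial y_j}(x,ty)\,dt,$$
so that $A(x,y)=(a_{ij}(x,y))$ is symmetric and $2A(0,0)=\mathrm d^2_y g(0,0)$ is nonsingular. A brief regularity check shows that $A$ is actually $\mathcal C^1$ in $(x,y)$, since $f\in\mathcal C^3$ and $y(\cdot)\in\mathcal C^2$. The core task is then to diagonalize the family $A(x,y)$ smoothly in $(x,y)$. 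I would argue by induction on $n-d$, in the spirit of a Gram--Schmidt reduction. Up to a permutation of the $y$-coordinates one may assume $a_{11}(0,0)\neq 0$, so $a_{11}$ keeps a constant sign $\veps_1=\pm 1$ near the origin; then set
$$\tilde\gamma_1(x,y)=\sqrt{|a_{11}(x,y)|}\,\Bigl(y_1+\sum_{j\geq 2}\frac{a_{1j}(x,y)}{a_{11}(x,y)}y_j\Bigr).$$
Completing the square shows that $g(x,y)-\veps_1\tilde\gamma_1(x,y)^2$ no longer contains $y_1$ and is a symmetric quadratic form in $(y_2,\dots,y_{n-d})$ whose coefficient matrix at the origin is the Schur complement of $a_{11}(0,0)$ in $A(0,0)$, hence still nondegenerate; the induction hypothesis then provides $\tilde\gamma_2,\dots,\tilde\gamma_{n-d}$.

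Finally, I would verify that $\Gamma(x,y)=(x,\tilde\gamma(x,y))$ is a $\mathcal C^1$-local diffeomorphism at the origin by observing that its Jacobian in $y$ is the product of the triangular change-of-basis matrix produced by the Gram--Schmidt step with the diagonal matrix $\mathrm{diag}(\sqrt{|a_{ii}(0,0)|})$, both invertible, so the inverse function theorem applies. The signature of the resulting sum of $\pm\gamma_j^2$ is that of $\mathrm d^2_y f(0)$. The only subtle point is regularity bookkeeping: because $f$ is only $\mathcal C^3$ and $y(x)$ only $\mathcal C^2$, the matrix $A$ is at best $\mathcal C^1$, and the square-root step preserves this regularity precisely because $|a_{11}|$ stays bounded away from $0$; this is consistent with the $\mathcal C^1$ conclusion of the statement.
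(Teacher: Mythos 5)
The paper does not actually prove this lemma: it is quoted from \cite[Sec.~4.44]{BG92} and used as a black box. Your argument is the standard textbook proof of the parametrized Morse lemma and it is essentially sound: the implicit-function-theorem step producing a $\mathcal C^2$ critical section $x\mapsto y(x)$ and the $\mathcal C^1$ remainder term $h$, the integral Taylor form $g(x,y)=\sum_{i,j}a_{ij}(x,y)y_iy_j$ with $A$ of class $\mathcal C^1$ (your regularity bookkeeping, using $f\in\mathcal C^3$ and $y(\cdot)\in\mathcal C^2$, is correct), the inductive completion of squares via Schur complements, and the triangular-times-diagonal Jacobian argument for $\Gamma$ all go through. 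One small inaccuracy: to arrange $a_{11}(0,0)\neq 0$ a permutation of the $y$-coordinates does not always suffice, since a nonsingular symmetric matrix can have all diagonal entries equal to zero (e.g. $\bigl(\begin{smallmatrix}0&1\\1&0\end{smallmatrix}\bigr)$); you need a preliminary linear change of the $y$-variables such as $y_1\mapsto y_1+y_j$, which is harmless for the statement since it commutes with the form $(x,y)\mapsto(x,\gamma(x,y))$ of the diffeomorphism. Also note that after the first square is removed the reduced coefficients still depend on $y_1$, so the induction should be phrased with $(x,y_1)$ absorbed into the parameter block; this is implicit in your write-up and causes no difficulty. With these two points made explicit, your proof is a complete and self-contained substitute for the citation.
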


The second one is a kind of Julia-Caratheodory theorem in several variables (see in particular \cite[Lemma 2.7]{BAYPOLY}).

\begin{lemma}\label{lem:julia}
Let $\varphi:\overline{\DD}^3\to\overline{\DD}$ which is holomorphic in $\DD^3$ and $\mathcal C^1$ on $\overline{\DD}^3$. Assume that $\varphi(e)=1$. Then for
all $k=1,2,3$, $\frac{\partial \varphi}{\partial z_k}(e)\geq 0.$ Moreover, if 
$\frac{\partial \varphi}{\partial z_k}(e)=0,$ then $\varphi$ does not depend on $z_k$.
\end{lemma}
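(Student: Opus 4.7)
The approach is to reduce to the classical one-variable Julia-Carath\'eodory theorem through slicing. For each $k\in\{1,2,3\}$, set
\[
\psi_k(w) := \varphi(1,\dots,w,\dots,1)
\]
with $w$ placed in the $k$-th slot; then $\psi_k$ is holomorphic on $\DD$, $\mathcal{C}^1$ on $\overline{\DD}$, maps $\overline{\DD}$ into itself, and satisfies $\psi_k(1)=1$ and $\psi_k'(1)=\frac{\partial\varphi}{\partial z_k}(e)=:\alpha_k$.

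For the first assertion $\alpha_k\ge 0$: for $r\in[0,1)$ the quotient $(1-\psi_k(r))/(1-r)$ has nonnegative real part, because $|\psi_k(r)|\le 1$ forces $\Ree(1-\psi_k(r))\ge 0$ and $1-r>0$; letting $r\to 1^-$ yields $\Ree(\alpha_k)\ge 0$. The reality of $\alpha_k$ follows by comparing Taylor expansions: $\psi_k(e^{i\theta})=1+i\theta\alpha_k+o(\theta)$ gives $|\psi_k(e^{i\theta})|^2=1-2\theta\,\Imm(\alpha_k)+o(\theta)$, and the constraint $|\psi_k(e^{i\theta})|\le 1$ for $\theta$ of either sign forces $\Imm(\alpha_k)=0$. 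Hence $\alpha_k\ge 0$.

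For the second assertion, suppose $\alpha_k=0$. The 1D Julia inequality applied to $\psi_k$ gives $|1-\psi_k(w)|^2\le\alpha_k\cdot\frac{|1-w|^2}{1-|w|^2}(1-|\psi_k(w)|^2)=0$, hence $\psi_k\equiv 1$, i.e.\ $\varphi\equiv 1$ on the slice $\{(1,\dots,w,\dots,1):w\in\overline{\DD}\}$. To upgrade this to global $z_k$-independence I would argue as follows: at every $\zeta=(1,\dots,\zeta_k,\dots,1)$ with $\zeta_k\in\TT$ one still has $\varphi(\zeta)=1$, and applying the first part of the lemma at $\zeta$ (after the rotation $\tilde z_j=z_j/\zeta_j$, under which $\tilde\varphi(e)=\varphi(\zeta)=1$) yields $\frac{\partial\varphi}{\partial z_j}(\zeta)\ge 0$ real for each $j\ne k$. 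Consequently the function $w\mapsto\frac{\partial\varphi}{\partial z_j}(1,\dots,w,\dots,1)$ --- holomorphic on $\DD$ as the uniform limit (as $\veps\to 0^+$) of the interior holomorphic functions $w\mapsto\frac{\partial\varphi}{\partial z_j}(1-\veps,\dots,w,\dots,1-\veps)$, and continuous on $\overline{\DD}$ --- takes real values on $\TT$; the maximum principle applied to its harmonic imaginary part then forces it to be a real constant (necessarily $\alpha_j$). Combining this with the polydisc Julia inequality $\frac{|1-\varphi(z)|^2}{1-|\varphi(z)|^2}\le\sum_j\alpha_j\frac{|1-z_j|^2}{1-|z_j|^2}$ (itself derivable from the 1D Julia lemma by slicing) --- whose right-hand side is independent of $z_k$ when $\alpha_k=0$ --- one concludes $\frac{\partial\varphi}{\partial z_k}\equiv 0$.

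The main obstacle is this final propagation step. The 1D Julia-Carath\'eodory argument handles the slice through $e$ cleanly, but passing from the vanishing of a \emph{single} angular derivative at one boundary point to the global identity $\partial_{z_k}\varphi\equiv 0$ on $\DD^3$ is a genuinely multivariate phenomenon, requiring the polydisc horocycle estimate together with boundary Schwarz-reflection arguments for first partial derivatives along the distinguished-boundary slice, as sketched above.
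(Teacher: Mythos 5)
First, a point of comparison: the paper does not prove this lemma at all --- it is quoted from \cite[Lemma 2.7]{BAYPOLY} --- so your attempt can only be judged on its own merits. Your treatment of the first assertion is correct: slicing through $e$ in the $k$-th direction, identifying $\psi_k'(1)$ with $\frac{\partial\varphi}{\partial z_k}(e)$ via the $\mathcal C^1$ extension, getting $\Ree\alpha_k\ge 0$ from the radial difference quotient and $\Imm\alpha_k=0$ from the tangential expansion is exactly the standard argument (modulo the small remark that holomorphy of $\psi_k$ on the boundary slice itself needs the uniform-limit argument you only spell out later for the derivatives).

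The second assertion, however, is not proved. The auxiliary facts you establish are fine: Julia's lemma on the slice gives $\varphi\equiv 1$ on $\{(1,\dots,w,\dots,1)\}$, and the reality/constancy of $w\mapsto\frac{\partial\varphi}{\partial z_j}(1,\dots,w,\dots,1)$ for $j\neq k$ follows as you say. The gap is the very last implication. Granting the polydisc Julia inequality $\frac{|1-\varphi(z)|^2}{1-|\varphi(z)|^2}\le\sum_{j\ne k}\alpha_j\frac{|1-z_j|^2}{1-|z_j|^2}$, what it tells you is that for each fixed $z'=(z_j)_{j\ne k}$ the map $w\mapsto\varphi(z',w)$ sends $\DD$ into a fixed horodisc $E(1,M(z'))$ (a disc internally tangent to $\TT$ at $1$). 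A nonconstant holomorphic map can perfectly well send $\DD$ into a proper subdisc of $\DD$, so ``the right-hand side is independent of $z_k$'' does not yield $\frac{\partial\varphi}{\partial z_k}\equiv 0$; nothing in your argument rules out a genuine, horodisc-confined dependence on $z_k$. Your own closing sentence concedes that this propagation step is the crux and only gestures at ``boundary Schwarz-reflection arguments'', which in your write-up amount to the constancy of $\frac{\partial\varphi}{\partial z_j}$ on the single boundary slice --- information that is consistent with, but does not force, global $z_k$-independence. Closing the gap requires an actual rigidity mechanism, e.g.\ the product-Poisson (Herglotz-type) representation of the positive $n$-harmonic function $\Ree\frac{1+\varphi}{1-\varphi}$ on $\DD^3$ and an analysis of its representing measure, or an equality analysis in Julia's lemma; none of this is present, so the proof of the statement's second half is missing its essential step.
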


\subsection{Positive definite quadratic forms}

We will also need a simple lemma on quadratic forms.

\begin{lemma}\label{lem:quadratic}
Let $Q$ be a positive definite quadratic form on $\RR^p$, let $R$ be a quadratic form 
on $\RR^p\times \RR^n$ such that its restriction to $\{0\}\times\RR^n$ is positive definite. Then there exists $\kappa>0$ such that 
$$(x,y)\mapsto \kappa Q(x)+R(x,y)$$
is positive definite. 
\end{lemma}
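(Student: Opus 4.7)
The plan is to decompose $R$ along the product structure and then bound the mixed term by an AM--GM style inequality, using both the positive definiteness of $Q$ and the positive definiteness of $R|_{\{0\}\times\RR^n}$.

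First I would write
\[
R(x,y)=R_1(x)+2B(x,y)+R_2(y),
\]
where $R_1(x):=R(x,0)$ is a quadratic form on $\RR^p$, $R_2(y):=R(0,y)$ is a quadratic form on $\RR^n$, and $B$ is a bilinear form on $\RR^p\times\RR^n$ obtained by polarization. By hypothesis $R_2$ is positive definite.

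Next I would record four constants: $c_Q,c_R>0$ such that $Q(x)\geq c_Q\|x\|^2$ and $R_2(y)\geq c_R\|y\|^2$ (from positive definiteness), and $C_1,C_B>0$ such that $|R_1(x)|\leq C_1\|x\|^2$ and $|2B(x,y)|\leq 2C_B\|x\|\|y\|$ (continuity of quadratic/bilinear forms in finite dimension). Then I would apply the Young/AM--GM inequality
\[
2C_B\|x\|\|y\|\leq \frac{2C_B^2}{c_R}\|x\|^2+\frac{c_R}{2}\|y\|^2
\]
to obtain
\[
\kappa Q(x)+R(x,y)\geq \left(\kappa c_Q-C_1-\frac{2C_B^2}{c_R}\right)\|x\|^2+\frac{c_R}{2}\|y\|^2.
\]
Choosing $\kappa$ large enough that the coefficient of $\|x\|^2$ is positive would immediately give $\kappa Q(x)+R(x,y)\geq c(\|x\|^2+\|y\|^2)$ for some $c>0$, hence positive definiteness.

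There is no serious obstacle here; the only thing to watch is that one should not try to dominate $R_1$ by $R_2$ (the signature of $R_1$ may be arbitrary, so $R_1$ may be negative), and this is precisely why the extra room provided by $\kappa Q$ is needed. An alternative, equally short approach would be a compactness argument on the unit sphere of $\RR^p\times\RR^n$: on the slice $\|x\|\leq \delta$ the form is bounded below by $R_2(y)+o(1)$, which is positive for $\delta$ small, and on the complement $\|x\|>\delta$ the term $\kappa Q(x)\geq \kappa c_Q\delta^2$ dominates the (bounded) contribution of $R$ once $\kappa$ is large enough. I would nonetheless prefer the algebraic route above since it yields an explicit threshold for $\kappa$.
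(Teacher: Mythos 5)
Your proof is correct and follows essentially the same route as the paper: the paper first normalizes so that $Q(x)=\sum_k x_k^2$ and $R(0,y)=\sum_l y_l^2$ and then absorbs the mixed terms by the same Young/AM--GM splitting, sending one half into the $\|x\|^2$ coefficient (to be dominated by $\kappa Q$) and the other into the $\|y\|^2$ coefficient (dominated by $R(0,y)$). The only difference is cosmetic: you keep abstract constants $c_Q,c_R,C_1,C_B$ where the paper works with explicit matrix entries bounded by $M$.
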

\begin{proof}
We may assume
\begin{align*}
Q(x)&=\sum_{k=1}^p x_k^2\\
R(x,y)&=\sum_{l=1}^n y_l^2+\sum_{1\leq k, k'\leq p} a_{k,k'}x_kx_{k'}+\sum_{
\substack{1\leq k\leq p \\ 1\leq l\leq n}}b_{k,l}x_ky_l.
\end{align*}
Let $M\geq \sup\{|a_{k,k'}|,|b_{k,l}|\}.$ Let also $\delta>0$.
Then one has
\begin{align*}
\sum_{k,l}|b_{k,l}x_ky_l|&\leq \sum_{k,l}M \frac{|x_k|}\delta(\delta |y_l|)\\
&\leq \frac M2\sum_{k,l}\left(\frac{x_k^2}{\delta^2}+\delta^2 y_l^2\right)\\
&\leq \frac {Mn}{2\delta^2}\sum_k x_k^2+\frac{Mp\delta^2}{2}\sum_l y_l^2.
\end{align*}
A similar argument yields
$$\sum_{k,k'}|a_{k,k'}x_kx_{k'}|\leq Mp\sum_k x_k^2.$$
We then adjust $\delta>0$ so that $Mp\delta^2/2<1$ and we choose
$$\kappa>\frac{Mn}{2\delta^2}+Mp.$$
For any $(x,y)\in \mathbb R^p\times\RR^n \backslash\{(0,0)\},$ $\kappa Q(x)+R(x,y)>0$ so that the quadratic form is positive definite.
\end{proof}

\subsection{Self-maps of the unit disc}

Our next lemmas will help us to provide examples of polynomials which are self maps of the unit disc.

\begin{lemma}\label{lem:eps0}
There exists $\veps_0>0$ such that, for all $\veps\in[-\veps_0,\veps_0]$, for all $\theta\in\RR$, 
$$\left|\left(\frac{1+e^{i\theta}}{2}\right)+i\veps (e^{i\theta}-1)^2\right|\leq 1$$
with equality if and only if $\theta\equiv 0\ [2\pi]$.
\end{lemma}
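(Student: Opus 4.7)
The plan is to reduce the inequality to a pleasant elementary form by first writing both pieces in terms of half-angles. Using $e^{i\theta}-1 = 2ie^{i\theta/2}\sin(\theta/2)$ and $\tfrac{1+e^{i\theta}}{2}=e^{i\theta/2}\cos(\theta/2)$, one computes
\[
\frac{1+e^{i\theta}}{2}+i\veps(e^{i\theta}-1)^2
= e^{i\theta/2}\Bigl(\cos(\theta/2)-4i\veps e^{i\theta/2}\sin^2(\theta/2)\Bigr).
\]
Setting $u=\theta/2$, the modulus in question equals $|\cos u-4i\veps e^{iu}\sin^2 u|$.

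Next I would expand the square. Writing $e^{iu}=\cos u+i\sin u$ and separating real and imaginary parts gives
\[
\Bigl|\cos u-4i\veps e^{iu}\sin^2 u\Bigr|^2
=(\cos u+4\veps\sin^3 u)^2+16\veps^2\cos^2 u\sin^4 u.
\]
The key algebraic step is that the cross term $16\veps^2\sin^6 u+16\veps^2\cos^2 u\sin^4 u$ collapses via $\sin^2 u+\cos^2 u=1$ to $16\veps^2\sin^4 u$, so the squared modulus simplifies to
\[
\cos^2 u+8\veps\cos u\sin^3 u+16\veps^2\sin^4 u
= 1-\sin^2 u\Bigl(1-4\veps\sin(2u)-16\veps^2\sin^2 u\Bigr).
\]

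From this identity the conclusion is immediate. The bracketed factor satisfies $1-4\veps\sin(2u)-16\veps^2\sin^2 u\geq 1-4|\veps|-16\veps^2$, which is strictly positive as soon as, say, $|\veps|\leq\veps_0:=1/8$. Hence for such $\veps$ the right-hand side of the displayed identity is $\leq 1$, with equality if and only if $\sin u=0$, i.e. $\theta\equiv 0\ [2\pi]$. There is no genuine obstacle here; the only thing to be careful about is the half-angle factorization and the cancellation in the cross term, which is what makes the expression collapse to a single factor of $\sin^2 u$ and thereby pinpoints equality exactly at $\theta\equiv 0\ [2\pi]$.
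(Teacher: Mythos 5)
Your proof is correct, and I have checked the key identity: with $u=\theta/2$ one does get
\[
\left|\tfrac{1+e^{i\theta}}{2}+i\veps(e^{i\theta}-1)^2\right|^2=\cos^2u+8\veps\cos u\sin^3u+16\veps^2\sin^4u
=1-\sin^2u\bigl(1-4\veps\sin(2u)-16\veps^2\sin^2u\bigr),
\]
and the bracket is at least $1-4|\veps|-16\veps^2\geq 1/4>0$ when $|\veps|\leq 1/8$, so equality forces $\sin(\theta/2)=0$. This is a genuinely different route from the paper's. The paper argues in two regimes: near $\theta=0$ it Taylor-expands $|g_\veps(\theta)|^2=1-\theta^2/4+\theta^3(\cdots)$ with coefficients bounded uniformly in $\veps$, which settles a neighbourhood $[-\theta_0,\theta_0]$; away from $0$ it uses that $|(1+e^{i\theta})/2|\leq c<1$ and takes $\veps_0=(1-c)/8$ so the perturbation cannot reach modulus $1$. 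Your exact half-angle identity collapses both regimes into one line and yields an explicit admissible value $\veps_0=1/8$, which is cleaner for this particular lemma. The trade-off is that the paper's perturbative template is the one that generalizes: the companion Lemma \ref{lem:eps1}, where the unperturbed map is $(3+6z-z^2)/8$ and no comparably clean closed form is available, is proved by exactly the same Taylor-plus-compactness scheme, so the paper gets both lemmas from one method while your argument is specific to this symbol.
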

\begin{proof}
Let 
$$g_\veps(\theta)=\left(\frac{1+e^{i\theta}}{2}\right)+i\veps (e^{i\theta}-1)^2$$
so that 
$$g_\veps(\theta)=1+\frac{i\theta}2-\frac{\theta^2}4+i\theta^3g_1(\theta)+\theta^4 g_2(\theta)+i\veps \theta^2 g_3(\theta)+\veps\theta^3 g_4(\theta)$$
where $g_1,\dots,g_4$ are bounded real-valued functions. Hence,
$$|g_\veps(\theta)|^2=\left(1-\frac{\theta^2}4\right)+\theta^3\left(\veps^2g_5(\theta)+\veps g_6(\theta)+g_7(\theta\right) )$$
with $g_5,g_6,g_7$ bounded real-valued functions.
Therefore there exists $\theta_0>0$ such that, for all $|\theta|\leq \theta_0$, for all $|\veps|<1$, $|g_{\veps}(\theta)|\leq 1$ with equality on $[-\theta_0,\theta_0]$ if and
only if $\theta=0$. Let us now consider $c\in (0,1)$ such that
$$\forall \theta\in [-\pi,\pi]\backslash [-\theta_0,\theta_0], \left|\frac{1+e^{i\theta}}{2}\right|\leq c.$$
Then the choice $\veps_0=\frac{1-c}8$ fulfills the requirements.
\end{proof}

\begin{lemma}
For all $z\in\overline{\DD}$, $|3+6z-z^2|\leq 8$ with equality if and only if $z=1.$
\end{lemma}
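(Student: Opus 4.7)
The polynomial $f(z)=3+6z-z^2$ is entire, so by the maximum modulus principle $|f|$ attains its maximum on $\overline{\DD}$ on the unit circle, and any interior point with $|f(z)|=8$ would force $f$ to be constant, which it is not. Hence it suffices to examine $|f(e^{i\theta})|$ for $\theta\in[-\pi,\pi]$, show it is bounded by $8$, and determine the equality cases on $\TT$.

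The plan is to compute $|f(e^{i\theta})|^2$ explicitly. Splitting real and imaginary parts gives
\begin{align*}
\Ree f(e^{i\theta})&=3+6\cos\theta-\cos(2\theta)=4+6\cos\theta-2\cos^2\theta,\\
\Imm f(e^{i\theta})&=6\sin\theta-\sin(2\theta)=2\sin\theta(3-\cos\theta).
\end{align*}
Squaring and adding, and letting $u=\cos\theta\in[-1,1]$, one finds after expansion that the cubic and quartic terms in $u$ cancel, leaving
$$|f(e^{i\theta})|^2=(4+6u-2u^2)^2+4(1-u^2)(3-u)^2=52+24u-12u^2.$$

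The right-hand side is a quadratic in $u$ whose derivative $24-24u$ vanishes only at $u=1$; since the leading coefficient is negative, the maximum on $[-1,1]$ is attained at $u=1$ and equals $52+24-12=64$. Therefore $|f(e^{i\theta})|\leq 8$ with equality if and only if $\cos\theta=1$, i.e. $z=e^{i\theta}=1$. Combined with the maximum modulus principle this yields $|3+6z-z^2|\leq 8$ on $\overline{\DD}$ with equality precisely at $z=1$. The only step requiring care is the algebraic simplification leading to $52+24u-12u^2$; everything else is immediate.
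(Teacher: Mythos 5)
Your proof is correct and follows essentially the same route as the paper: reduce to the unit circle (the paper does this implicitly, you via the maximum modulus principle), expand $|3+6e^{i\theta}-e^{2i\theta}|^2$ as a polynomial in $u=\cos\theta$, and observe that the resulting quadratic is at most $64$ with equality only at $u=1$. Your identity $|f|^2=52+24u-12u^2$ is equivalent to the paper's reduction to $\cos\theta(2-\cos\theta)\leq 1$, i.e. $-(u-1)^2\leq 0$.
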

\begin{proof}
 Let $z=e^{i\theta}\in\TT$. Then 
 $$|3+6z-z^2|^2\leq 64\iff (3+6\cos(\theta)-\cos(2\theta))^2+(6\sin(\theta)-\sin(2\theta))^2\leq 64.$$
 Developing, reducing and using $\cos(2\theta)=2\cos^2(\theta)-1$, $\sin(2\theta)=2\sin(\theta)\cos(\theta),$
 $\sin^2(\theta)+\cos^2(\theta)=1,$ we easily get that this is equivalent to 
 $$\cos(\theta)(2-\cos(\theta))\leq 1$$
 and this last inequality is verified for all $\theta\in\RR$ with equality if and only if $\theta\equiv 0\ [2\pi].$
\end{proof}

\begin{lemma}\label{lem:eps1}
 There exists $\veps_1>0$ such that, for all $\veps\in[-\veps_1,\veps_1]$, for all $z\in\overline{\DD}$, 
 $$\left|\frac{3+6z-z^2}8+2i\veps(z-1)^2-i\veps (z-1)^3\right|\leq 1$$
 with equality if and only if $z=1.$
\end{lemma}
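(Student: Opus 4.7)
Since $f_\veps$ is a polynomial, the maximum modulus principle reduces the problem to the unit circle: if I show $|f_\veps(e^{i\theta})|\le 1$ for every real $\theta$ with equality only when $\theta\equiv 0\pmod{2\pi}$, then any interior equality would force $f_\veps$ to be constant, which it is not for small $\veps$.

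I write $f_\veps=P+i\veps h$ with $P(z)=(3+6z-z^2)/8$ and $h(z)=(z-1)^2(3-z)$, and expand
$$|f_\veps(e^{i\theta})|^2=|P(e^{i\theta})|^2-2\veps\,\Imm\bigl(\bar P(e^{i\theta})h(e^{i\theta})\bigr)+\veps^2|h(e^{i\theta})|^2.$$
The preceding lemma sharpens, via $|3+6e^{i\theta}-e^{2i\theta}|^2=64-12(1-\cos\theta)^2$, to $1-|P(e^{i\theta})|^2=\tfrac{3}{16}(1-\cos\theta)^2$, and a direct computation yields $|h(e^{i\theta})|^2=4(1-\cos\theta)^2(10-6\cos\theta)$. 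For the cross term I set $w=z-1$; the circle identity $w\bar w=-(w+\bar w)$ allows me to replace $\bar w w^k$ by $|w|^2 w^{k-1}$ and $\bar w^2 w^k$ by $|w|^4 w^{k-2}$, so that $\bar Ph=(1+\bar w/2-\bar w^2/8)(2w^2-w^3)$ becomes a polynomial in $w$ alone, with coefficients involving $|w|^2=2(1-\cos\theta)$. Taking imaginary parts and substituting $\Imm(w)=\sin\theta$, $\Imm(w^2)=-2(1-\cos\theta)\sin\theta$, $\Imm(w^3)=2(1-\cos\theta)\sin\theta\bigl(2(1-\cos\theta)-1\bigr)$ collapses the calculation to
$$\Imm\bigl(\bar P(e^{i\theta})h(e^{i\theta})\bigr)=-\tfrac{3}{2}(1-\cos\theta)^2\sin\theta.$$

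Setting $r:=1-\cos\theta$, the three ingredients combine into the factorisation
$$1-|f_\veps(e^{i\theta})|^2=r^2\left[\tfrac{3}{16}-3\veps\sin\theta-4\veps^2(10-6\cos\theta)\right].$$
Since $|\sin\theta|\le 1$ and $10-6\cos\theta\le 16$, the bracket is bounded below by $\tfrac{3}{16}-3|\veps|-64\veps^2$, which is strictly positive as soon as $\veps_1$ is small enough (for instance $\veps_1=1/64$ makes it at least $1/8$). Hence for $|\veps|\le\veps_1$ one has $1-|f_\veps(e^{i\theta})|^2\ge 0$ with equality iff $r=0$, i.e.\ $\theta\equiv 0\pmod{2\pi}$, which together with the first paragraph yields the lemma. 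The main obstacle is the identity for $\Imm(\bar Ph)$: a brute Taylor expansion only reveals that the linear-in-$\veps$ correction is of order $\theta^5$ against a main term $1-|P|^2$ of order $\theta^4$, forcing delicate bookkeeping and separate arguments near and far from $\theta=0$; the circle identity $w\bar w=-(w+\bar w)$ instead exposes the common factor $r^2$ and reduces the smallness condition on $\veps_1$ to an elementary inequality.
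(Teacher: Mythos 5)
Your proof is correct, and it takes a genuinely different route from the paper's. The paper argues locally: it expands $G(\theta)=|f_\veps(e^{i\theta})|^2$ at $\theta=0$, notes that the linear-in-$\veps$ contribution enters only at order $\theta^5$ so that $G(\theta)=1+\bigl(4\veps^2-\tfrac{3}{64}\bigr)\theta^4+O(\theta^5)$, deduces $G\le 1$ on some $[-\theta_0,\theta_0]$ when $\veps^2<3/512$, and then treats $|\theta|>\theta_0$ by perturbing off the strict bound $\bigl|\tfrac{3+6e^{i\theta}-e^{2i\theta}}{8}\bigr|\le c<1$ there. You instead derive the exact global identity
$1-|f_\veps(e^{i\theta})|^2=(1-\cos\theta)^2\bigl[\tfrac{3}{16}-3\veps\sin\theta-4\veps^2(10-6\cos\theta)\bigr]$;
I verified the three ingredients --- $1-|P|^2=\tfrac{3}{16}(1-\cos\theta)^2$, $|h|^2=4(1-\cos\theta)^2(10-6\cos\theta)$, and $\Imm(\bar Ph)=-\tfrac{3}{2}(1-\cos\theta)^2\sin\theta$ obtained via $w\bar w=-(w+\bar w)$ on the circle --- and they are all right, and consistent with the paper's local coefficients $-\tfrac{3}{64}$ and $4\veps^2$. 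Your route buys a single uniform inequality on the whole circle, an explicit admissible value $\veps_1=1/64$ (the paper's $\veps_1$ depends on an unspecified $\theta_0$ and $c$), and a transparent identification of the equality locus with $\{\cos\theta=1\}$; the price is the cross-term algebra, which you organize cleanly. The maximum-modulus reduction at the start is also handled correctly, since $f_\veps$ is non-constant.
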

\begin{proof}
 We set 
 $$G(\theta)=\left|\frac{3+6e^{i\theta}-e^{2i\theta}}8+2i\veps(e^{i\theta}-1)^2-i\veps(e^{i\theta}-1)^3\right|^2$$
 and we expand it around $0.$ We find
 $$G(\theta)=1+\left(4\veps^2-\frac 3{64}\right)\theta^4+\theta^5(G_1(\theta)+\veps G_2(\theta)+\veps^2 G_3(\theta))$$
 with $G_1,G_2,G_3$ bounded real-valued functions. Therefore there exists $\theta_0>0$ such that,
 for all $|\theta|\leq \theta_0$, for all $\veps^2<3/512$, $|G(\theta)|\leq 1$ with equality on $[-\theta_0,\theta_0]$
 if and only if $\theta=0$. Let us consider $c\in (0,1)$ such that 
 $$\forall \theta\in[-\pi,\pi]\backslash [-\theta_0,\theta_0],\ \left|\frac{3+6e^{i\theta}-e^{2i\theta}}{8}\right|\leq c.$$
 Then the choice $\veps_1=\frac{1-c}{32}<\sqrt {3/512}$ fulfills the requirements.
\end{proof}

\section{Discontinuity when the symbol has a boundary point of order of contact $2$}

The key point to prove Theorem \ref{thm:main} is the study of the case of holomorphic maps admitting points with order of contact equal to $2.$
We begin by showing that the conditions given in that theorem are necessary.

\begin{theorem}\label{thm:necconditions}
Let $\phi:\overline{\DD}^3\to\DD^3$ be holomorphic on $\DD^3$ and $\mathcal C^3$-smooth on $\overline{\DD}^3.$
Assume that there exist $I\subset\{1,2,3\}$ with $|I|=2$, $I=\{i_1,i_2\}$ and $\xi\in\TT^3$ such that $\phi_I(\xi)\in\TT^2$. 
Assume that $\nabla \phi_{i_1}(\xi)$ and $\nabla\phi_{i_2}(\xi)$ are linearly dependent, and that 
\begin{itemize}
\item either $s(\phi,I,\xi)=1$ and $r(\phi,I,\xi)\neq (2,0)$ or $(0,2)$
\item or $s(\phi,I,\xi)=2$ and $r(\phi,I,\xi)\neq (1,0)$ or $(0,1)$.
\end{itemize}
Then $C_\phi$ is not continuous on $H^2(\DD^3).$
\end{theorem}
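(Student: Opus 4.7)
The plan is to violate the necessary condition \eqref{eq:necessarycondition} of Lemma~\ref{lem:discontinuityh2} by producing, in each forbidden configuration, a family of Carleson boxes whose $\phi$-preimages are too large. After a rotation, I assume $\xi = e$, $I = \{1,2\}$, $\phi_I(\xi) = (1,1)$, and I choose coordinates on $(\RR^3)^*$ so that $L_k(\theta) = \theta_k$ for $k=1,2,3$. For $\delta>0$ small I work with Carleson boxes $S(\eta,\ovd)$ of radii $\ovd = (\delta,\delta,\delta_3)$ centered at $\eta = (e^{i\alpha_1},e^{i\alpha_2},\phi_3(e))$, where $\alpha_1,\alpha_2 = O(\sqrt\delta)$ will be chosen so that a suitably weighted difference of the two imaginary-part constraints kills the leading term $L$, and where $\delta_3$ is of order one (so that the third-coordinate constraint is satisfied on a subset of $\TT^3$ of $\sigma_3$-measure bounded below uniformly in~$\delta$).

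Using the Taylor expansions \eqref{eq:impart}--\eqref{eq:realpart}, the condition $\phi(\theta) \in S(\eta,\ovd)$, restricted to a small neighborhood of $\xi$, reduces to the scalar constraints
\begin{align*}
|Q_j(\theta) - \tfrac12\alpha_j^2| &\lesssim \delta + |\theta|^3,\\
|\kappa_j L(\theta) + \kappa_j\,\mathrm{Quad}_j(\theta) - \alpha_j| &\lesssim \delta + |\theta|^3, \qquad j=1,2,
\end{align*}
where $\mathrm{Quad}_j$ collects the quadratic terms of $\Imm\phi_j$. Since $Q_1+Q_2$ is positive definite on $\vect(L_1,\ldots,L_m)$, the real-part constraints confine $(\theta_1,\ldots,\theta_m)$ to a set of diameter $O(\sqrt\delta)$. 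A suitable weighted difference of the two imaginary-part constraints eliminates the linear $L$-term, and restricted to the complement $\vect(L_{m+1},\ldots,L_3)$ (where $\theta_1 = \cdots = \theta_m = 0$) it yields exactly $|R(\theta_{m+1},\ldots,\theta_3)| \lesssim \delta$. When $m = 1$, the surviving imaginary-part constraint additionally pins $|\theta_1| \lesssim \delta$, since the leading $\kappa_j\lambda\,\theta_1$ then dominates.

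For each forbidden signature of $R$ I lower-bound the measure of the admissible $(\theta_{m+1},\ldots,\theta_3)$ in a fixed small neighborhood of $0$. When $m=1$ and $r(\phi,I,\xi) = (1,1)$, $R$ is hyperbolic on $\RR^2$ and Lemma~\ref{lem:volumeestimate2} gives an area $\gtrsim -\delta\log\delta$. When $m=1$ and $r(\phi,I,\xi)$ equals $(1,0)$ or $(0,1)$, $R$ has rank one and $|R|\lesssim\delta$ only constrains one linear direction to width $\sqrt\delta$, yielding an area $\gtrsim \sqrt\delta$. When $r(\phi,I,\xi) = (0,0)$ (for $m=1$ or $m=2$), $R\equiv 0$ on the complement, and the admissible region there has measure bounded below by a positive constant. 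Multiplying by the $\theta_1$-range (of order $\delta$ in the $m=1$ cases) or by the $(\theta_1,\theta_2)$-area (of order $\delta^{3/2}$, coming from a slab of width $\delta$ intersected with a disc of radius $\sqrt\delta$, in the $m=2$ case), the $\sigma_3$-measure of the preimage exceeds $\delta_1\delta_2\delta_3 \asymp \delta^2$ by an unbounded factor as $\delta \to 0^+$, which contradicts Lemma~\ref{lem:discontinuityh2}.

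The main technical obstacle will be the rigorous justification of the second step: controlling the cubic remainders and the mixed quadratic terms $a_{j,k,l}L_kL_l$ with $k\le m < l$, which a priori are only of size $O(\sqrt\delta\cdot\epsilon)$ on a neighborhood of fixed radius $\epsilon$ and are not automatically absorbed by the $O(\delta)$ error. I expect to handle this by applying the parametrized Morse lemma (Lemma~\ref{lem:parametrizedmorse}) to perform a $\mathcal C^1$-change of coordinates in $(\theta_1,\ldots,\theta_m)$, depending smoothly on the remaining variables, that simultaneously diagonalizes the real-part data and absorbs the cross terms into new variables, reducing everything to the canonical situation $|R|\lesssim \delta$ on the complement, after which the elementary volume estimates of Section~3 apply directly.
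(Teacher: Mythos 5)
Your overall strategy is the paper's: reduce to $\xi=e$, work on the torus, and contradict the Carleson--box necessary condition of Lemma~\ref{lem:discontinuityh2} by exhibiting preimages of boxes of radius $(\delta,\delta,2)$ whose $\sigma_3$-measure exceeds $\delta^2$ by an unbounded factor; the case $s=1$, $r=(1,1)$ is handled exactly as in the paper via Lemma~\ref{lem:volumeestimate2}. The gap is in the measure lower bounds for the degenerate signatures, which rest on an incorrect product description of the admissible set. The constraint coming from $|\Imm\phi_j(\theta)|\lesssim\delta$, $j=1,2$, is not ``$|\theta_1|\lesssim\delta$ together with $|R|\lesssim\delta$ in the kernel variables'': it is $|\theta_1+\mathrm{Quad}_j(\theta)|\lesssim\delta$, where $\mathrm{Quad}_j$ contains, besides the part measured by $R$, the mixed terms $a_{j,k,l}L_kL_l$ with $k\le m<l$ and the common quadratic part on the kernel, and these are nowhere near $O(\delta)$ on the regions you describe. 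In the case $s(\phi,I,\xi)=2$, $r=(0,0)$ this is fatal rather than cosmetic: the two imaginary-part constraints force $|(a_1-a_2)\theta_1\theta_3+(b_1-b_2)\theta_2\theta_3|\lesssim\delta$, and since the real parts force $|\theta_2|\lesssim\sqrt\delta$, any product region is limited to $|\theta_3|\lesssim\sqrt\delta$ and has measure $\asymp\delta\cdot\sqrt\delta\cdot\sqrt\delta=\delta^2$ --- no contradiction with continuity. Your constant-size $\theta_3$-range (hence area $\delta^{3/2}$) ignores exactly these mixed terms. The paper escapes only by replacing the product with the hyperbolic region $\{|\theta_2|\le\delta^{1/2},\ |\theta_3|\le\delta^{1/3},\ |\theta_2\theta_3|\le\delta\}$ and invoking Lemma~\ref{lem:volumeestimate1} to gain a factor $\log(1/\delta)$; the margin here is only logarithmic, so there is no slack to absorb a wrong power of~$\delta$.

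The same issue, in milder form, affects your $s=1$ cases with $r=(1,0),(0,1)$ or $(0,0)$: letting the kernel variables range over a fixed neighbourhood makes the common quadratic part, hence the centre of the $\theta_1$-interval, of constant size, violating both the real-part bound $|\theta_1|\lesssim\sqrt\delta$ and the cubic remainder estimates. One must restrict those variables to $|\theta_l|\lesssim\delta^{1/3}$, which shrinks your claimed areas ($\sqrt\delta$, resp.\ a constant) to $\delta^{5/6}$, resp.\ $\delta^{2/3}$ --- still enough to beat $\delta^2$, but this has to be stated and verified. Finally, the proposed rescue via the parametrized Morse lemma cannot close the gap: Lemma~\ref{lem:parametrizedmorse} requires a nonsingular Hessian in the fibre variables, which fails precisely for the degenerate signatures you are treating, and no single change of coordinates can normalize the mixed terms in both $\Imm\phi_1$ and $\Imm\phi_2$ simultaneously --- absorbing them into one component reintroduces them in the other with coefficients $a_{2,k,l}-a_{1,k,l}$. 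What is actually needed is the elementary hyperbola estimate of Lemma~\ref{lem:volumeestimate1} applied to those mixed terms, and that step is missing from your proposal.
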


\begin{proof}
We may always assume that $\xi=e,$ that $I=\{1,2\}$ and that $\phi_I(e)=(1,1)$.
We shall prove that condition \eqref{eq:necessarycondition} breaks down.  Because it only depends 
on what happens on the unit polycircle, we are allowed to make a linear change of variables on the coordinates $(\theta_1,\theta_2,\theta_3)$; this will not changed
the volume estimates. Therefore, in what follows, we will always assume that 
the linear map appearing in \eqref{eq:impart} is $L(\theta)=\theta_1$. We shall split the proof into several cases and we will intensively use
the notations of Section \ref{sec:statement}.

\medskip

{\bf Case 1.} $s(\phi,I,\xi)=1$ and $r(\phi,I,\xi)=(0,0)$. Since $L(\theta)=\theta_1$ and $s(\phi,I,\xi)=1,$ we may choose $L_2=\theta_2$ and $L_3=\theta_3$
and write, for $j=1,2,$ keeping only the terms of order $2$ which can be expressed with $\theta_2$ and $\theta_3,$
$$\Imm \phi_j(\theta)=\kappa_j(\theta_1+a_j\theta_2^2+b_j\theta_2\theta_3+c_j\theta_3^2)+O(\theta_1^2+|\theta_1\theta_2|+|\theta_1\theta_3|+|\theta_2|^3+|\theta_3|^3).$$
Since $r(\phi,I,\xi)=(0,0)$, we get $a_1=a_2=:a,$ $b_1=b_2=:b,$ $c_1=c_2=:c.$ Since $s(\phi,I,\xi)=1$, we may also write
$$\Ree\phi_j(\theta)=1-d_j\theta_1^2+O(|\theta_1|^3+|\theta_2|^3+|\theta_3|^3)$$
with $d_j> 0$, for $j=1,2$.
Let $\delta>0$ and set $\bar\delta=(\delta,\delta,2).$
 We choose any $(\theta_2,\theta_3)\in [-\delta^{1/3},\delta^{1/3}]$
and then, any $\theta_1\in [-a\theta_2^2-b\theta_2\theta_3-c\theta_3^2-\delta,-a\theta_2^2-b\theta_2\theta_3-c\theta_3^2+\delta].$ 
The volume of these $\theta=(\theta_1,\theta_2,\theta_3)$ is greater than $\delta^{5/3}\gg \delta^2.$
Moreover, for these $\theta$, observing that $|\theta_1|\lesssim \theta^{2/3}$, it is easy to check that 
\begin{align*}
 |\Imm\phi_j(\theta)|&\lesssim |\theta_1+a\theta_2^2+b\theta_2\theta_3+c\theta_3^2|+O(\delta^{4/3}+\delta)\\
 &\lesssim \delta
\end{align*}
and 
$$ |\Ree\phi_j(\theta)-1|\lesssim\delta.$$
Therefore, 
$$ \sigma_3\left(\left\{e^{i\theta}\in\TT^3:\ \phi(\theta)\in S(\eta,\delta)\right\}\right)\gg \delta^2$$
which prevents $C_\phi$ to be continuous.

\medskip

{\bf Case 2.} $s(\phi,I,\xi)=1$ and $r(\phi,I,\xi)=(1,0)$ or $(0,1)$. Keeping the notations of
Section \ref{sec:statement} and doing a linear change of variables, we may assume that 
$$R(\theta_2,\theta_3)=*\theta_3^2.$$
This means that we may write, for $j=1,2,$
$$\Imm \phi_j(\theta)=\kappa_j(\theta_1+a\theta_2^2+b\theta_2\theta_3+c_j\theta_3^2)+O\left(\theta_1^2+|\theta_1\theta_2|+|\theta_1\theta_3|+|\theta_2|^3+|\theta_3|^3\right)$$
and we always have
$$\Ree\phi_j(\theta)=1-d_j\theta_1^2+O\left(|\theta_1|^3+|\theta_2|^3+|\theta_3|^3\right).$$

We first assume that $a\neq 0$ and, without loss of generality, that $a=1$. Writing
$$\theta_2^2+b\theta_2\theta_3=\left(\theta_2+\frac b2\theta_3\right)^2-\frac{b^2}4\theta_3^2$$
and setting $\theta'_2=\theta_2+\frac b2\theta_3,$ $\theta'_3=\theta_3$, we may also assume that $b=0.$
Let $\delta>0$ and consider any $(\theta_1,\theta_3)\in [-2\delta^{2/3},-\delta^{2/3}]\times [-\delta^{1/2},\delta^{1/2}].$ 
For these couples $(\theta_1,\theta_3)$, consider any $\theta_2$ in the interval 
$$I_{\theta_1}=\left[\sqrt{-\theta_1-\delta},\sqrt{-\theta_1+\delta}\right].$$
The length of $I_{\theta_1}$ satisfies
$$|I_{\theta_1}|\gtrsim \frac{\delta}{\delta^{1/3}}=\delta^{2/3}$$
so that 
$$
\sigma_3\left(\left\{e^{i\theta}\in\TT^3:\ (\theta_1,\theta_3)\in [-2\delta^{2/3},\delta^{2/3}]\times [-\delta^{1/2},\delta^{1/2}],\ \theta_2\in I_{\theta_1}\right\}\right)\gtrsim \delta^{\frac 43+\frac 12}\gg \delta^2.
$$
Moreover, for these $\theta$, observing that $|\theta_2|\lesssim \delta^{1/3}$, it is easy to check that, for $j=1,2$,
\begin{align*}
|\Imm \phi_j(\theta)|&\lesssim \left|\theta_1+\theta_2^2\right|+O\left(|\theta_1|^2+|\theta_1\theta_2|+|\theta_1\theta_3|+|\theta_2|^3+|\theta_3|^2\right)\\
&\lesssim \delta
\end{align*}
 and $|\Ree \phi_j(\theta)-1|\lesssim \delta$.
 Therefore $C_\phi$ cannot be continuous. We now handle the case $a=0$. This means that, for $j=1,2,$
 \begin{align*}
 \Imm \phi_j(\theta)&=\kappa_j\left(\theta_1+b\theta_2\theta_3+c_j\theta_3^2\right)+O\left(\theta_1^2+|\theta_1\theta_2|+|\theta_1\theta_3|+|\theta_2|^3+|\theta_3|^3\right)\\
 \Ree \phi_j(\theta)&=1-d_j \theta_1^2+O\left(|\theta_1|^3+|\theta_2|^3+|\theta_3|^3\right).
 \end{align*}
 For $\delta>0,$ we set $K_\delta=\left\{(\theta_2,\theta_3):\ |\theta_2|\leq \delta^{1/3},\ |\theta_3|\leq \delta^{1/2},\ |\theta_2\theta_3|\leq\delta\right\}$
 which satisfies by Lemma \ref{lem:volumeestimate1} $\lambda_2(K_\delta)\gtrsim -\delta\log(\delta).$ Then any $\theta\in [-\delta,\delta]\times K_\delta$ satisfies 
 $$|\Imm \phi_j(\theta)|\lesssim \delta\textrm{ and }|\Ree \phi_j(\theta)-1|\lesssim\delta.$$
 Since $\sigma_3\left(\left\{e^{i\theta}:\ \theta\in [-\delta,\delta]\times K_\delta\right\}\right)\gtrsim -\delta^2\log(\delta)$, $C_\phi$ cannot be continuous.
 
 \medskip
 
 {\bf Case 3.} $s(\phi,I,\xi)=1$ and $r(\phi,I,\xi)=(1,1)$. Doing
  a linear change of variables, we may and shall assume that 
 $$R(\theta_2,\theta_3)=\kappa_1\kappa_2(\theta_2^2-\theta_3^2).$$
 This means that we may write, for $j=1,2,$
 $$\Imm \phi_j(\theta)=\kappa_j\left(\theta_1+a_j \theta_2^2+b\theta_2\theta_3+c_j\theta_3^2\right)+O\left(\theta_1^2+|\theta_1\theta_2|+|\theta_1\theta_3|+|\theta_2|^3+|\theta_3|^3\right)$$
 with $a_1=a_2+1,$ $c_1=c_2-1$.  For $\delta\in (0,1)$, we set 
 $$E(\delta)=\left\{(\theta_2,\theta_3)\in \left[-\delta^{1/3},\delta^{1/3}\right],\ \left|\theta_2^2-\theta_3^2\right|\leq\delta\right\}.$$
 For $(\theta_2,\theta_3)\in E(\delta),$ let 
 $$I_{\theta_2,\theta_3}(\delta)=\left\{\theta_1:\ \left|\theta_1+a_1\theta_2^2+b\theta_2\theta_3+c_1\theta_3\right|^2\leq\delta\right\}.$$
 Then Lemma \ref{lem:volumeestimate2} implies that
 $$\lambda_2\left(\left\{\theta:\ (\theta_2,\theta_3)\in E(\delta),\ \theta_1\in I_{\theta_2,\theta_3}(\delta)\right\}\right)\gtrsim -\delta^2\log(\delta).$$
 On the other hand, provided $(\theta_2,\theta_3)\in E(\delta)$ and $\theta_1\in I_{\theta_2,\theta_3}(\delta)$, we easily verify that $\left|\theta_1\right|\lesssim \delta^{2/3}$
which in turn implies 
\begin{align*}
\left|\Imm \phi_1(\theta)\right|&\lesssim \delta+O\left(|\theta_1\theta_2|+|\theta_1\theta_3|+|\theta_1|^2+|\theta_2|^3+|\theta_3|^3\right)\\
&\lesssim \delta,
\end{align*}
\begin{align*}
\left|\Imm \phi_2(\theta)\right|&\lesssim \delta+\left|\theta_2^2-\theta_3^2\right|+O\left(|\theta_1\theta_2|+|\theta_1\theta_3|+|\theta_1|^2+|\theta_2|^3+|\theta_3|^3\right)\\
&\lesssim \delta,\\
\left|\Ree \phi_j(\theta)-1\right|&\lesssim \delta,\ j=1,2.
\end{align*}
Therefore, $ \sigma_3\left(\left\{e^{i\theta}\in\TT^3:\ \phi(\theta)\in S(\eta,\bar\delta)\right\}\right)\gg\delta^2$ for $\bar\delta=(\delta,\delta,\delta)$, 
which prevents $C_\phi$ to be continuous.

\medskip

{\bf Case 4.} $s(\phi,I,\xi)=2$ and $r(\phi,I,\xi)=(0,0)$. Using the notations of Section \ref{sec:statement} and doing a linear change of variables, we may assume that $L(\theta)=\theta_1$
and that the linear forms $L_{j,k}$, $1\leq j,k\leq 2,$ are linear combinations of $\theta\mapsto\theta_1$
and $\theta\mapsto\theta_2$. Therefore, we may write 
\begin{align*}
\Imm \phi_j(\theta)&=\kappa_j\left(\theta_1+a_j\theta_1\theta_3+b_j\theta_2\theta_3
+c\theta_3^2\right)+O\left(|\theta_1|^2+|\theta_2|^2+|\theta_3|^3\right)\\
\Ree \phi_j(\theta)&=1+O\left(|\theta_1|^2+|\theta_2|^2+|\theta_3|^3\right).
\end{align*}
Let $\delta>0$ and 
$$E(\delta)=\left\{(\theta_2,\theta_3):\ |\theta_2|\leq\delta^{1/2},\ |\theta_3|\leq \delta^{1/3},\ |\theta_2\theta_3|\leq\delta\right\}.$$
By Lemma \ref{lem:volumeestimate1}, $\lambda_2(E(\delta))\gtrsim-\delta\log(\delta).$
For $\theta_3\in\mathbb R$, we set 
$$I_{\theta_3}(\delta)=\left\{\theta_1:\ \left|\theta_1+c\theta_3^2\right|\leq\delta\right\}$$
so that 
$$\sigma_3\left(\left\{e^{i\theta}:\ (\theta_2,\theta_3)\in E(\delta),\ \theta_1\in I_{\theta_3}(\delta)\right\}\right) \gtrsim -\delta^2\log(\delta).$$
Moreover, for $\theta$ such that $(\theta_2,\theta_3)\in E(\delta)$ and $\theta_1\in I_{\theta_3}(\delta),$ one has $|\theta_1|\lesssim \delta^{2/3},$ 
$|\theta_2|\leq \delta^{1/2}$ and $|\delta_3|\leq \delta^{1/3}$ so that for $j=1,2$, $|\Ree \phi_j(\theta)-1|\lesssim\delta$ and 
$\left|\Imm\phi_j(\theta)\right|\lesssim\delta.$ Again, $C_\phi$ cannot be continuous.
\end{proof}


\section{Volume estimates when the symbol has a boundary point of order 2} \label{sec:volumeestimate}
The aim of this section is to prove the following result, which is the main step towards the proof of the positive part of Theorem \ref{thm:main}.

\begin{theorem}\label{thm:volumelinearlydependent}
Let $\phi\in\mathcal O(\DD^3,\DD^3)\cap\mathcal C^3(\overline{\DD^3})$.
Let $\xi\in\TT^3$, $I\subset\{1,2\}$ with $|I|=2$, $I=\{i_1,i_2\}$, $\phi_I(\xi)\in\TT^2$ and $\nabla\phi_{i_1}(\xi),\nabla\phi_{i_2}(\xi)$ are linearly dependent. 
There exist a constant $C>0$ and a neighbourhood $\mathcal U$ of $\xi$ such that,
for all $\beta\in(-1,0],$ for all $\eta\in\TT^2$, for all $\ovd\in (0,+\infty)^2,$
$$V_\beta\left(\left\{(z_1,z_2,z_3)\in\mathcal U\cap \DD^3:\ |\phi_{i_j}(z)-\eta_j|\leq \delta_j,\ j=1,2\right\}\right)\leq C\delta_1^{2+\beta}\delta_2^{2+\beta}$$
in the following cases:
\begin{itemize}
\item $s(\phi,I,\xi)=3$;
\item $s(\phi,I,\xi)=2$ and $r(\phi,I,\xi)\in \{(1,0);(0,1)\}$;
\item $s(\phi,I,\xi)=1$ and $r(\phi,I,\xi)\in \{(2,0);(0,2)\}.$
\end{itemize}
\end{theorem}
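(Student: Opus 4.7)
The plan is to adapt the local analysis from the proof of Theorem \ref{thm:necconditions}, turning it around to control volumes rather than produce them. After the usual reductions---applying a linear change of variables on $\theta$ so that $\xi = e$, $I = \{1,2\}$, $\phi_I(\xi) = (1,1)$, $L(\theta) = \theta_1$, and noting that by shrinking $\mathcal{U}$ the preimage is empty unless $\eta_j$ is close to $1$, in which case a unit-modulus rotation of $\phi_j$ reduces to $\eta_j = 1$---we parametrize $z_k = (1-\rho_k)e^{i\theta_k}$ and use the $\mathcal{C}^3$-extension to obtain, near $(\rho,\theta) = 0$,
\begin{align*}
 1 - \Ree \phi_j(z) &= \sum_k \alpha_{j,k}\rho_k + Q_j(\theta) + O(|\rho|^2 + |\rho||\theta| + |\theta|^3),\\
 \Imm \phi_j(z) &= \kappa_j\theta_1 + \tilde Q_j(\theta) + \tilde B_j(\rho, \theta) + O(|\rho|^2 + |\rho||\theta| + |\theta|^3),
\end{align*}
where $\alpha_{j,k} = \partial\phi_{i_j}/\partial z_k(e) > 0$ by the observation following Theorem \ref{thm:main} (valid in each of the three cases considered here), $Q_j$ is positive semi-definite, and $\tilde B_j$ is bilinear in $(\rho,\theta)$.

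The crucial step is to establish $|\theta|^2 \lesssim \delta_1 + \delta_2$ for any $z \in \mathcal{U}\cap\DD^3$ satisfying $|\phi_{i_j}(z) - 1| \leq \delta_j$. Summing the two real-part inequalities gives $(Q_1 + Q_2)(\theta) \lesssim \delta_1 + \delta_2$ modulo cubic and bilinear-in-$(\rho,\theta)$ errors, which are absorbed by shrinking $\mathcal{U}$. In the case $s = 3$, positive definiteness of $Q_1 + Q_2$ yields the bound immediately. In the case $s = 2$ with $r \in \{(1,0),(0,1)\}$, a further linear change places $\ker(Q_1+Q_2) = \vect(e_3)$ and $R = c\theta_3^2$ with $c \neq 0$; the real-part sum gives $\theta_1^2 + \theta_2^2 \lesssim \delta_1 + \delta_2$, and then $\kappa_2\Imm\phi_1 - \kappa_1\Imm\phi_2$ kills the linear term to produce $c\theta_3^2 + O\big(\theta_1^2 + \theta_2^2 + \sqrt{\delta_1+\delta_2}\,|\theta_3|\big) = O(\delta_1+\delta_2)$, from which an AM-GM absorption of the cross-term yields $\theta_3^2 \lesssim \delta_1+\delta_2$. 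The case $s = 1$ with $r \in \{(2,0),(0,2)\}$ is analogous: $\theta_1^2 \lesssim \delta_1+\delta_2$ from the real parts, while $R$ being definite on $(\theta_2,\theta_3)$ combined with the same linear combination gives $\theta_2^2 + \theta_3^2 \lesssim \delta_1+\delta_2$.

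Once $|\theta|^2 \lesssim \delta_1 + \delta_2$ is in hand, the condition $|\Imm\phi_j| \lesssim \delta_j$ for the smaller $\delta_j$ confines $\theta_1$ to an interval of length $\lesssim \min(\delta_1,\delta_2)$ at fixed $(\theta_2,\theta_3,\rho)$, since the leading behaviour is $\kappa_j\theta_1 + (\textrm{already controlled terms})$. Positivity of all $\alpha_{j,k}$ yields $\rho_k \lesssim \min(\delta_1,\delta_2)$ from each real-part condition. The three-dimensional $\theta$-volume at fixed $\rho$ is therefore at most $\min(\delta_1,\delta_2)\cdot(\delta_1+\delta_2) \lesssim \delta_1\delta_2$, and integration in $\rho$ produces $\int \rho_1^\beta\rho_2^\beta\rho_3^\beta\, d\rho \lesssim \min(\delta_1,\delta_2)^{3(1+\beta)}$. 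By a Fubini step in the spirit of Lemma \ref{lem:volumeestimate4}, $V_\beta \lesssim \delta_1\delta_2\cdot\min(\delta_1,\delta_2)^{3(1+\beta)} \leq \delta_1^{2+\beta}\delta_2^{2+\beta}$, where the last inequality uses $\beta \geq -1$ after assuming without loss of generality $\delta_1 \leq \delta_2$.

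The main obstacle is the AM-GM absorption in the $s < 3$ cases: one must verify that $\kappa_2\Imm\phi_1 - \kappa_1\Imm\phi_2$ really has $R$ as its leading quadratic term on the kernel of $Q_1+Q_2$, modulo cross-terms controllable by the already-established bounds on the other $\theta$-directions. Keeping the cubic and bilinear-in-$(\rho,\theta)$ error terms absorbed uniformly in $\rho \in [0, \min(\delta_1,\delta_2)]$ requires shrinking $\mathcal{U}$ once at the start, and relies on the strict positivity of all $\alpha_{j,k}$---precisely where the observation after Theorem \ref{thm:main} enters.
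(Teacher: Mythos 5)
Your reduction to a torus estimate, the bound $\rho_k\lesssim\min(\delta_1,\delta_2)$ from the strict positivity of the $\alpha_{j,k}$, the Fubini step, and the final exponent bookkeeping all match the paper. But the ``crucial step'' $|\theta|^2\lesssim\delta_1+\delta_2$ is false, and this is a genuine gap rather than a fixable detail. The point $\eta$ is an \emph{arbitrary} point of $\TT^2$; shrinking $\mathcal U$ only forces $\eta$ to lie within the (fixed, $\delta$-independent) diameter of $\phi_I(\mathcal U)$ of $(1,1)$, not within $O(\sqrt{\delta_2})$. Writing $\eta_j=x_j+iy_j$, the sum of the two real-part conditions gives only
$$(Q_1+Q_2)(\theta)\;\leq\;(2-x_1-x_2)+O(\delta_1+\delta_2)+O(|\theta|^3),$$
and $2-x_1-x_2$ can itself be of order $|\theta|^2$. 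Concretely, for $\phi_1=\phi_2=(z_1+z_2+z_3)/3$ (a case with $s=3$) and $\eta_1=\eta_2=e^{it}$ with $\sqrt{\delta}\ll t\ll 1$, the preimage of $S(\eta,(\delta,\delta))$ contains a neighbourhood of the point $\theta=(t,t,t)$, where $|\theta|\sim t\gg\sqrt{\delta}$. The same defect recurs in your $s<3$ cases: $\kappa_2\Imm\phi_1-\kappa_1\Imm\phi_2$ is pinned near the constant $\kappa_2y_1-\kappa_1y_2$, which need not be $O(\delta_1+\delta_2)$, so the AM--GM absorption does not yield $\theta_3^2\lesssim\delta_1+\delta_2$. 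The ``unit-modulus rotation'' does not rescue this: replacing $\phi_j$ by $\overline{\eta_j}\phi_j$ destroys the normalization $\phi_j(e)=1$, so all the structure constants ($L$, $Q_j$, $\kappa_j$, the neighbourhood) would have to be re-derived at an $\eta$-dependent base point with constants uniform in $\eta$ --- and that uniformity is precisely the content of the theorem.

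Geometrically, the constraint confines the transverse variables to a thin \emph{annulus} (or shell) of width $\sim\delta_2$ around a level set of $Q_1+Q_2$ whose level depends on $\eta$, not to a disc of radius $\sqrt{\delta_2}$; your disc count only covers the level $0$. The paper's proof handles this by first straightening $\Imm\phi_1$ with a diffeomorphism $U$, then applying the parametrized Morse lemma to $F=-(Q_1+Q_2)+K_1+K_2$ (or to the appropriate combination $\kappa(Q_1+Q_2)+\kappa_1\kappa_2(\,\cdot\,)$, resp.\ $\kappa_2\Imm\phi_1-\kappa_1\Imm\phi_2$, in the $s=2$ and $s=1$ cases) to reach the normal form $\pm v_2^2\pm v_3^2+h(v_1)$, and finally invoking Lemma \ref{lem:volumeestimate3}: the set $\{|v_2^2+v_3^2-a|<\delta\}$ has area at most $2\pi\delta$ \emph{uniformly in $a$}. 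That uniformity in the level $a$ is exactly the uniformity in $\eta$ that your argument is missing; without it, the estimate you obtain is only valid for Carleson boxes centred at $\phi_I(\xi)$ itself.
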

\begin{proof}
To fix the notations, we assume that $I=\{1,2\}$, that $\xi=e=(1,1,1)$ and that $\phi_I(e)=(1,1)$. Without loss of generality we assume $\delta_1\leq\delta_2.$ We start dealing with the case $s(\phi,I,\xi)=3$ and we first prove an intermediate result:

\medskip

{\bf Fact.} There exist $C>0$ and a neighbourhood $\mathcal O$ of $0$ in $\mathbb R^3$ such that, for all $\ovd\in(0,+\infty)^2,$ for all $\eta\in\TT^2,$
\begin{equation}\label{eq:volumelinearlydependent1}
\lambda_3\left(\left\{\theta\in\mathcal O:\ |\phi_j(\theta)-\eta_j|\leq \delta_j,\ j=1,2\right\}\right)\leq C\delta_1\delta_2.
\end{equation}

\bigskip

We fix $\eta\in\TT^2$ and $\ovd\in(0,+\infty)^2$.  We know that, for $j=1,2$, 
$$\Imm \phi_j(\theta)=\kappa_j\left(L(\theta)+G_j(\theta)\right)$$
with $G_j(0)=0,$ $\nabla G_j(0)=0.$ Doing a linear change of variables, which does not affect neither the desired inequality \eqref{eq:volumelinearlydependent1}
nor the signature of $Q$, we may assume that $L(\theta)=\theta_1$.
We set $U(\theta)=(\theta_1+G_1(\theta),\theta_2,\theta_3).$ There exist two
neighbourhoods $\mathcal O_1$ and $\mathcal O_2$ of $0$ and $C_1>0$ such that
$U$ is a diffeomorphism from $\mathcal O_1$ onto $\mathcal O_2$ and 
$$\left\{
\begin{array}{rcll}
|\det(\mathrm dU)|&\leq& C_1&\textrm{on }\mathcal O_1\\
|\det(\mathrm dU^{-1})|&\leq& C_1&\textrm{on }\mathcal O_2.
\end{array}\right.$$
We then write
$$\Ree \phi_j(\theta)=1-Q_j(\theta)+H_j(\theta)$$
with $H_j(\theta)=O\left(|\theta_1|^3+|\theta_2|^3+|\theta_3|^3\right).$
Because $\mathrm dU(0)=I_3$, for $u\in \mathcal O_2,$
$$\Ree \phi_j\circ U^{-1}(u)=1-Q_j(u)+K_j(u)$$
with $K_j(u)=O\left(|u_1|^3+|u_2|^3+|u_3|^3\right).$ Let us now define
$$E(\ovd)=\left\{\theta\in\RR^3:\ |\phi_j(\theta)-\eta_j|\leq\delta_j,\ j=1,2\right\}$$
and let us write $\eta_j=x_j+iy_j.$ Provided $u\in U(E(\ovd)\cap\mathcal O_1),$
\begin{eqnarray}
\nonumber&&|-Q_1(u)-Q_2(u)+K_1(u)+K_2(u)+2-x_1-x_2|\\
&&\quad\quad\quad\quad\leq|\Ree \phi_1\circ U^{-1}(u)-x_1|+|\Ree \phi_2\circ U^{-1}(u)-x_2| \nonumber \\
&&\quad\quad\quad\quad\leq2\delta_2. \label{eq:volumelinearlydependent2}
\end{eqnarray}
We set $F(u)=-Q_1(u)-Q_2(u)+K_1(u)+K_2(u)$. Since the signature of $Q_1+Q_2$ is $(3,0)$, the matrix 
$$\left(\frac{\partial^2 F}{\partial u_i\partial u_k}(0)\right)_{2\leq i,j\leq 3}$$
is negative definite. Hence we may apply the parametrized Morse lemma (Lemma \ref{lem:parametrizedmorse}): there exist two neighbourhoods $\mathcal O_2'\subset\mathcal O_2$ 
and $\mathcal O_3$ of $0$, a constant $C_2>0$ and a diffeomorphism $V:\mathcal O'_2\to\mathcal O_3,$ $V(u_1,u_2,u_3)=(u_1,V_2(u_1,u_2,u_3),V_3(u_1,u_2,u_3))$ and a $\mathcal C^1$-map
$h:\mathbb R\to\mathbb R$ such that, for all $v\in\mathcal O_3,$ 
$F\circ V^{-1}(v)=-v_2^2-v_3^2+h(v_1)$ and 
$$\left\{
\begin{array}{rcll}
|\det(\mathrm dV)|&\leq C_2&\textrm{on }\mathcal O'_2\\
|\det(\mathrm dV^{-1})|&\leq C_2&\textrm{on }\mathcal O_3.
\end{array}\right.$$
We finally set $\mathcal O=U^{-1}(\mathcal O'_2)\subset\mathcal O_1.$ Therefore,
provided $v\in V\circ U(E(\ovd)\cap\mathcal O),$ we infer from $|\Imm \phi_1-\eta_1|\leq\delta_1$ on $E(\delta)$ and from \eqref{eq:volumelinearlydependent2} that
$$\left\{
\begin{array}{rcl}
|v_1-y_1|&\leq&\delta_1/\kappa_1\\
|-v_2^2-v_3^2+h(v_1)+2-x_1-x_2|&\leq&2\delta_2
\end{array}\right.$$
It then follows from Fubini's theorem and Lemma \ref{lem:volumeestimate3} that 
$$\lambda_3\big(V\circ U(E(\ovd)\cap\mathcal O)\big)\lesssim\delta_1\delta_2$$
where the involved constant does not depend on $\eta_1,\eta_2$ and $\delta$. 
We then 
get the fact by the change of variables formula.

\bigskip

We now prove the statement of the theorem (still in the case $s(\phi,I,\xi)=3$). We
write each $z_k=(1-\rho_k)e^{i\theta_k}.$ A Taylor expansion of $\phi_j$ near $e$ reveals that
$$|\phi_j(z)|^2=1+\sum_{k=1}^3 \rho_k F_{j,k}(\rho,\theta)+G_j(\theta)$$
where $F_{j,k}(0,0)=-2\frac{\partial \phi_j}{\partial z_k}(e).$ 
Observe that $\frac{\partial \phi_j}{\partial z_k}(e)\neq 0$, otherwise 
we would get $\frac{\partial \phi_1}{\partial z_k}(e)=\frac{\partial\phi_2}{\partial z_j}(e)=0$ (recall that $\nabla\phi_1(e)$ and $\nabla\phi_2(e)$ are linearly dependent). This would imply that neither $\phi_1$ nor $\phi_2$ would depend on $z_k$ (see Lemma \ref{lem:julia}), which contradicts 
$s(\phi,I,\xi)=3.$

Choosing $\rho_k=0$, we get $G_j(\theta)\leq 0$ for all $\theta$. On the other hand, provided $|\phi_1(z)-\eta_1|<\delta_1,$ which implies $|\phi_1(z)|^2\geq 1-2\delta_1,$
we necessarily get 
$$\sum_{k=1}^3 \rho_k F_{1,k}(\rho,\theta)\geq -2\delta_1.$$
Since $F_{1,k}(0,0)<0$, we get the existence of a neighbourhood $\mathcal U_1$ of $e$ in $\overline{\DD^3}$ 
such that, for all $z\in\mathcal U_1\cap \phi_I^{-1}(S(\eta,\ovd))$, $\rho_k\lesssim\delta_1$, $k=1,2,3$ where the implied constant does not depend on $\ovd$. Shrinking $\mathcal U_1$ if necessary, we may assume that it is convex. 
Now, observe that for $z\in \mathcal U_1\cap \phi_I^{-1}(S(\eta,\ovd)),$ writing always $z_k=(1-\rho_k)e^{i\theta_k},$ for $j=1,2,$
\begin{align*}
|\phi_j(z)-\phi_j(\theta)|&\leq \sup_{\mathcal U_1} \|\nabla\phi_j\|\cdot \max_k \|z_k-e^{i\theta_k}\|\\
&\lesssim \delta_1\leq\delta_j.
\end{align*}
Hence, there exists $D>0$ such that, for all $z\in\mathcal U_1\cap \phi_I^{-1}(S(\eta,\ovd))$, we have $e^{i\theta}\in\phi_I^{-1}(S(\eta,D\ovd)).$
Let $\mathcal O$ be given by the fact (for $D\ovd$) and let us set
$$\mathcal U=\mathcal U_1\cap\left\{\big((1-\rho_k)e^{ik\theta}\big)_{k=1}^3:\ \theta\in\mathcal O,\ 0\leq \rho_k\leq \delta_1\right\}.$$
We get from Lemma \ref{lem:volumeestimate4} the existence of $C>0$ such that, for all $\beta\in (-1,0]$, for all $\eta\in\TT^2$, for all $\ovd\in(0,2]^2,$
\begin{align*}
V_\beta\left(\mathcal U\cap \phi_I^{-1}(S(\eta,\ovd))\right)&\leq C\delta_1^{3(1+\beta)}\delta_1\delta_2\\
&\leq C\delta_1^{2+\beta}\delta_2^{2+\beta}.
\end{align*}

\medskip

We turn to the case $s(\phi,I,\xi)=2$ and $r(\phi,I,\xi)=(1,0)$ or $(0,1)$. Arguing as in the case $s(\phi,I,\xi)=3,$ we just need to prove \eqref{eq:volumelinearlydependent1}.
Indeed, the last part of the proof only depends on the property that neither $\phi_1$ nor $\phi_2$ do not depend on some coordinate $z_k$ and the assumptions
$s(\phi,I,\xi)=2$ and $r(\phi,I,\xi)=(1,0)$ or $(0,1)$ also prevents this.
Using the notations of Section \ref{sec:statement} and doing a linear change of variables, we may always assume that $L(\theta)=\theta_1$ and that each $L_{j,k}$ is a linear combination of $\theta\mapsto \theta_1$ and $\theta\mapsto\theta_2$. In particular, we may write
$$\Imm \phi_j(\theta)=\kappa_j\left(\theta_1+\sum_{1\leq k\leq l\leq 3}a_{j,k,l}\theta_k\theta_l+\tilde G_j(\theta)\right)$$
with $\tilde G_j(\theta)=O\left(|\theta_1|^3+|\theta_2|^3+|\theta_3|^3\right).$
The condition $r(\phi,I,\xi)=(1,0)$ or $(0,1)$ just means that $a_{2,3,3}-a_{1,3,3}\neq 0$. We first do the same change of variables as in the case $s(\phi,I,\xi)=3$: $U:\mathcal O_1\to\mathcal O_2$ is defined by
$$U(\theta_1,\theta_2,\theta_3)=\left(\theta_1+\sum_{1\leq k\leq l\leq 3}a_{1,k,l}\theta_k\theta_l+\tilde G_1(\theta),\theta_2,\theta_3\right)$$
and satisfies 
$$\left\{
\begin{array}{ll}
|\det(\mathrm dU)|\leq C&\textrm{on }\mathcal O_1\\
|\det(\mathrm dU^{-1})|\leq C&\textrm{on }\mathcal O_2.
\end{array}\right.$$
Therefore, 
\begin{align*}
\Imm \phi_1\circ U^{-1}(u)&=\kappa_1 u_1\\
\Imm \phi_2\circ U^{-1}(u)&=\kappa_2\left(u_1+\sum_{1\leq k\leq l\leq 3}b_{k,l}u_ku_l+H(u)\right)
\end{align*}
with $b_{k,l}=a_{2,k,l}-a_{1,k,l}$ and $H(u)=O\left(|u_1|^3+|u_2|^3+|u_3|^3\right).$ In particular, $b_{3,3}\neq 0$. Without loss of generality, we will assume that $b_{3,3}>0$. We may also write 
$$\Ree \phi_j\circ U^{-1}(u)=1-Q_j(u)+K_j(u)$$
where $Q_j$ is still the quadratic form introduced in Section \ref{sec:statement} (the change of variables
does not affect it since $\mathrm{d}U(0)=I_3$) and $K_j(u)=O\left(|u_1|^3+|u_2|^3+|u_3|^3\right)$. In particular, $Q_1+Q_2$ only depends on $(u_1,u_2)$ and has signature $(2,0)$. By Lemma \ref{lem:quadratic}, there exists $\kappa>0$ so that 
$$R:u\mapsto \kappa(Q_1+Q_2)(u)+\kappa_1\kappa_2\sum_{1\leq k\leq l\leq 3}b_{k,l}u_ku_l$$
is positive definite. On the other hand, writing $\eta_j=x_j+iy_j$, for all $u\in U(E(\ovd)\cap\mathcal O_1)$,
\begin{eqnarray*}
&&\big|R(u)-\kappa(K_1+K_2)(u)+\kappa(-2+x_1+x_2)+\kappa_2 H(u)+\kappa_1 y_2-\kappa_2 y_1\big|\\
&&\quad\quad\quad\quad \leq \kappa |\Ree \phi_1\circ U^{-1}(u)-x_1|+\kappa |\Ree \phi_2\circ U^{-1}(u)-x_2|+\\
&&\quad\quad\quad\quad\quad\quad\kappa_1|\Imm \phi_1\circ U^{-1}(u)-y_1|+\kappa_2|\Imm \phi_2\circ U^{-1}(u)-y_2|\\
&&\quad\quad\quad\quad\leq (2\kappa+\kappa_1+\kappa_2)\delta_2.
\end{eqnarray*}
We now argue exactly as in the proof of the case $s(\phi,I,\xi)=3,$ by considering 
$$F(u)=R(u)-\kappa(K_1+K_2)(u)+\kappa_1\kappa_2 H(u)$$
and by applying the parametrized Morse lemma to it. Details are left to the reader.

\medskip

We end the proof of Theorem \ref{thm:volumelinearlydependent} by solving the case $s(\phi,I,\xi)=1$ and $r(\phi,I,\xi)=(2,0)$ (or $(0,2)$).
We use again the notations of Section \ref{sec:statement} and we assume that $L(\theta)=\theta_1.$ We know that
$$\kappa_2\Imm \phi_1(\theta)-\kappa_1 \Imm \phi_2(\theta)=a\theta_1^2+b\theta_1\theta_2+c\theta_1\theta_3+Q(\theta_2,\theta_3)+H(\theta)$$
where $Q$ is a quadratic form with signature $(2,0)$ (the case $(0,2)$ is symmetric) and $H(\theta)=O\left(|\theta_1|^3+|\theta_2|^3+|\theta_3|^3\right)$. We also write
$$\Imm \phi_1(\theta)=\kappa_1\big(\theta_1+G_1(\theta)\big)$$
with $G_1(\theta)=O\left(|\theta_1|^2+|\theta_2|^2+|\theta_3|^2\right).$ As above, we consider the diffeomorphism $U:\mathcal O_1\to\mathcal O_2$
$$U(\theta)=(\theta_1+G_1(\theta),\theta_2,\theta_3).$$
Since $\mathrm{d}U(0)=I_3$, we still have (with the same coefficients)
$$(\kappa_2\Imm \phi_1-\kappa_1 \Imm \phi_2)\circ U^{-1}(u)=au_1^2+bu_1u_2+cu_1u_3+Q(u_2,u_3)+\tilde H(u)$$
with $\tilde H(u)=O\left(|u_1|^3+|u_2|^3+|u_3|^3\right).$
We now apply the parametrized Morse lemma to $\kappa_2\Imm\phi_1-\kappa_1\Imm\phi_2$ to get a diffeomorphism $V:\mathcal O_2'\subset\mathcal O_2\to \mathcal O_3,$ $V(u_1,u_2,u_3)=(u_1,v_2(u_1,u_2,u_3),v_3(u_1,u_2,u_3))$
and a $\mathcal C^1$-map $h$ such that 
$$(\kappa_2\Imm \phi_1-\kappa_1\Imm\phi_2)\circ(U\circ V)^{-1}(v)=v_2^2+v_3^2+h(v_1).$$
Setting $\mathcal O=U^{-1}(\mathcal O'_2)$, we conclude as before since, for all $\theta\in \mathcal O$ such that $|\phi_j(\theta)-\eta_j|\leq\delta_j,$ $j=1,2,$ for $v=V\circ U(\theta),$ 
$$\left\{
\begin{array}{rcl}
|\kappa_1 v_1-x|&\leq&\delta_1\\
|v_2^2+v_3^2+h(v_1)-y|&\lesssim&\delta_2
\end{array}\right.
$$
for some real numbers $x$ and $y$.
\end{proof}

To cover the other cases of Theorem \ref{thm:main}, we shall need the following result.

\begin{theorem}\label{thm:kosinski}
 Let $\phi\in\mathcal O(\DD^d,\DD^d)\cap\mathcal C^3(\overline{\DD^3})$. Let $\xi\in\TT^d$, let $I=\{i_1,\dots,i_p\}\subset\{1,\dots,d\},$ $I\neq\varnothing$
 such that $\phi_I(\xi)\in\TT^{|I|}$ and $\nabla \phi_{i_1}(\xi),\dots,\nabla\phi_{i_p}(\xi)$ are linearly independent.
 There exist $C>0$ and a neighbourhood $\mathcal U$ of $\xi$ such that, for all $\beta\in(-1,0],$ 
 for all $\eta\in\TT^{|I|},$ for all $\ovd\in(0,+\infty)^{|I|},$
 $$V_\beta\left(\left\{z\in\mathcal U\cap \DD^d:\ |\phi_{i_j}(z)-\eta_j|\leq\delta_j,\ j=1,\dots,p\right)\right\}\leq C\delta_1^{2+\beta}\cdots \delta_p^{2+\beta}.$$
\end{theorem}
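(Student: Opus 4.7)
My plan is to mimic the two-step architecture of Theorem \ref{thm:volumelinearlydependent}: first I establish an analogue of the ``Fact'' for $\lambda_d$ on the distinguished boundary, then extend it to a $V_\beta$ estimate on the polydisc via Lemma \ref{lem:volumeestimate4}. The key simplification here is that linear independence of the gradients makes the torus estimate fall out of the imaginary parts alone, with no use of the parametrized Morse lemma. As usual I reduce to $\xi=e$, $I=\{1,\dots,p\}$, $\phi_I(\xi)=(1,\dots,1)$ and set $c_{j,k}=\frac{\partial\phi_j}{\partial z_k}(\xi)\geq 0$ (Lemma \ref{lem:julia}); the hypothesis is that the real vectors $v_j=(c_{j,k})_{k=1}^d$, $j=1,\dots,p$, are linearly independent in $\RR^d$.

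For the Fact, I expand $\Imm\phi_j(\theta)=L_j(\theta)+o(\|\theta\|)$ with $L_j(\theta)=\sum_k c_{j,k}\theta_k$. Since these $p$ linear forms are linearly independent, I complete them to a basis $L_1,\dots,L_d$ of $(\RR^d)^*$. By the inverse function theorem, the map $\theta\mapsto (\Imm\phi_1(\theta),\dots,\Imm\phi_p(\theta),L_{p+1}(\theta),\dots,L_d(\theta))$ is a diffeomorphism with bounded Jacobian from a neighborhood $\mathcal O$ of $0$ onto its image. The hypothesis $|\phi_j(e^{i\theta})-\eta_j|\leq\delta_j$ forces the $j$-th new coordinate into an interval of length $2\delta_j$ for $j\leq p$, while the remaining coordinates stay in a bounded set, yielding
$$\lambda_d\big(\{\theta\in\mathcal O:\ |\phi_j(e^{i\theta})-\eta_j|\leq\delta_j,\ j\leq p\}\big)\leq C\prod_{j=1}^p \delta_j.$$

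Next I control the distance to the torus. Write $\{1,\dots,d\}=K'\sqcup K''$ with $K'=\{k:\exists j,\ c_{j,k}>0\}$. By Lemma \ref{lem:julia}, for $k\in K''$ none of $\phi_1,\dots,\phi_p$ depends on $z_k$, so $\rho_k$ is only bounded by the size of $\mathcal U$. For $k\in K'$, I fix any $j$ with $c_{j,k}>0$ and set $K_j=\{m:c_{j,m}>0\}$. Since $\phi_j$ does not depend on $z_m$ for $m\notin K_j$, its expansion reads $|\phi_j(z)|^2=1+\sum_{m\in K_j}\rho_m F_{j,m}(\rho_{K_j},\theta_{K_j})+G_j(\theta_{K_j})$ with $G_j\leq 0$ and $F_{j,m}(0,0)=-2c_{j,m}<0$ for $m\in K_j$; the very argument of the case $s(\phi,I,\xi)=3$ in Theorem \ref{thm:volumelinearlydependent} then produces $\sum_{m\in K_j}\rho_m\lesssim\delta_j$, hence $\rho_k\lesssim\delta_j$. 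Minimizing over admissible $j$ gives $\rho_k\leq C\tilde\delta_k$ with $\tilde\delta_k:=\min\{\delta_j:c_{j,k}>0\}$. The same estimate controls $|\phi_j(z)-\phi_j(e^{i\theta})|\lesssim\delta_j$, so that for $z$ in our set, $e^{i\theta}\in\phi_I^{-1}(S(\eta,D\ovd))$ for a universal constant $D$.

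The combinatorial finale is Hall's marriage theorem applied to the bipartite graph on $\{1,\dots,p\}\sqcup\{1,\dots,d\}$ with edges $\{(j,k):c_{j,k}>0\}$. Its hypothesis is exactly the linear independence of the $v_j$'s: if some $S\subset\{1,\dots,p\}$ had neighborhood $N(S)$ of size strictly less than $|S|$, the vectors $(v_j)_{j\in S}$ would all be supported in $N(S)$, a subspace of dimension less than $|S|$, contradicting their independence. So there is an injection $\iota:\{1,\dots,p\}\to K'$ with $c_{j,\iota(j)}>0$, whence $\tilde\delta_{\iota(j)}\leq\delta_j$ and $\prod_k\tilde\delta_k^{1+\beta}\leq C\prod_{j=1}^p\delta_j^{1+\beta}$ (the unmatched factors are bounded since $1+\beta>0$ and $\tilde\delta_k\leq 1$). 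Combining with the Fact through Lemma \ref{lem:volumeestimate4} yields
$$V_\beta\big(\mathcal U\cap\phi_I^{-1}(S(\eta,\ovd))\big)\lesssim \prod_k\tilde\delta_k^{1+\beta}\cdot\prod_{j=1}^p\delta_j\lesssim \prod_{j=1}^p\delta_j^{2+\beta}.$$
The main obstacle, compared with the clean situation of Theorem \ref{thm:volumelinearlydependent} where $s=3$ forces every partial derivative to be nonzero, is the bookkeeping around possibly vanishing $c_{j,k}$; Hall's matching is precisely what converts the coordinate-wise bounds $\rho_k\lesssim\tilde\delta_k$ into the desired product estimate indexed only by $I$.
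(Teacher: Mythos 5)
Your argument is correct, but note that the paper does not prove this statement itself: it is imported from \cite{Ko22} (Lemma 12 and its proof), so there is no in-paper proof to compare against line by line. What you have done is reconstruct a self-contained proof by transplanting the architecture of Theorem \ref{thm:volumelinearlydependent}: a Lebesgue estimate on the torus followed by radial control via the expansion $|\phi_j(z)|^2=1+\sum_k\rho_kF_{j,k}+G_j$ and Lemma \ref{lem:volumeestimate4}. Two points of your writeup go beyond a routine copy and are exactly where the independent case differs from the dependent one treated in Section \ref{sec:volumeestimate}. First, since the forms $\Imm\phi_j$ have independent differentials, the torus estimate does follow from the imaginary parts alone via a single diffeomorphism; no information from $\Ree\phi_j$ and no parametrized Morse lemma are needed. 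Second, the Hall/system-of-distinct-representatives step is genuinely necessary: when some $\frac{\partial\phi_j}{\partial z_k}(\xi)$ vanish, the product $\prod_{k}\tilde\delta_k^{1+\beta}$ coming from the radial directions need not be comparable to $\prod_j\delta_j^{1+\beta}$ without an injective assignment of constraints to coordinates, and your observation that Hall's condition is precisely the linear independence of the supported vectors is the right fix (the paper sidesteps this issue in Theorem \ref{thm:volumelinearlydependent} by first showing that all relevant partial derivatives are nonzero there). The only loose end is cosmetic: the claim $\tilde\delta_k\leq 1$ for the unmatched indices requires the standard reduction to $\delta_j\leq 2$; for larger $\delta_j$ the constraint $|\phi_j(z)-\eta_j|\leq\delta_j$ is vacuous and one applies the estimate for the subfamily indexed by $I\setminus\{i_j\}$, whose gradients remain independent. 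Once that normalization is recorded, the proof is complete.
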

The proof of this theorem is contained in \cite{Ko22}: see Lemma 12 and the proof of it.

\section{Proof of our main theorem}

This section is devoted to the proof of Theorem \ref{thm:main}. Let us first prove the sufficient part. We introduce 
$$A_1=\left\{\xi\in\overline{\DD}^3:\ \phi(\xi)\in\TT^3\right\}.$$
Let $\xi\in A_1.$ We first observe that $\xi$ belongs to $\TT^3$. Otherwise if we assumed that $\xi_1\in\DD$, we would obtain that $\phi$ does not depend on $z_1$ which 
would prevent $\mathrm d\phi(\xi)$ to be invertible. By Theorem \ref{thm:kosinski}, there exist
a neighbourhood $\mathcal U(\xi)$ of $\xi$ and $C(\xi)>0$ such that, for all $\eta\in\TT^3,$ for all $\beta\in(-1,0]$, for all $\ovd\in (0,2]^3$,
$$V_\beta\left(\phi^{-1}(S(\eta,\ovd))\cap\mathcal U(\xi)\right)\leq C(\xi)\delta_1^{2+\beta}\delta_2^{2+\beta}\delta_3^{2+\beta}.$$
Since $A_1$ is compact, it can be covered by a finite number of sets $\mathcal U(\xi_{1,1}),\dots,\mathcal U(\xi_{1,p})$. We also get the existence of $\veps_1\in (0,1)$ such that, for all $z\in B_1:=\overline{\DD}^3\backslash\bigcup_k \mathcal U(\xi_{1,k})$, $|\phi_j(z)|<1-2\veps_1$ for some $j\in\{1,2,3\}.$

We then set 
$$A_2=\left\{\xi \in B_1:\ \textrm{there exists }I\subset\{1,2,3\},\ |I|=2,\ \phi_I(\xi)\in\TT^2\right\}.$$
Let $\xi\in A_2$. We first assume that $\nabla \phi_{i_1}(\xi)$ and $\nabla \phi_{i_2}(\xi)$ are linearly independent. Let $j\in\{1,2,3\}\backslash I$ and observe 
that $|\phi_j(\xi)|<1-2\veps_1.$ Then we may again apply Theorem \ref{thm:kosinski} to
get a neighbourhood $\mathcal U(\xi)$ of $\xi$ and $D(\xi)>0$ such that, for all $\eta\in\TT^3,$ for all $\beta\in(-1,0]$, for all $\ovd\in(0,2]^3$, for all $z\in \mathcal U(\xi)$, 
$$\left\{
\begin{array}{l}
V_\beta\left(\phi_I^{-1}(S(\eta_I,\ovd_I))\cap\mathcal U(\xi)\right)\leq D(\xi)\delta_{i_1}^{2+\beta}\delta_{i_2}^{2+\beta}\\
|\phi_j(z)|<1-\veps_1.
\end{array}\right.$$
Since $\phi^{-1}(S(\eta,\ovd))\cap \mathcal U(\xi)$ is empty provided $\delta_j< \veps_1$, we get 
\begin{align*}
V_\beta\left(\phi^{-1}(S(\eta,\ovd))\cap\mathcal U(\xi)\right)&\leq\frac{D(\xi)}{\veps_1^{2+\beta}}\delta_1^{2+\beta}\delta_2^{2+\beta}\delta_3^{2+\beta}\\
&\leq C(\xi)\delta_1^{2+\beta}\delta_2^{2+\beta}\delta_3^{2+\beta}
\end{align*}
where we have set $C(\xi)=D(\xi)/\veps_1^2$ (which does not depend on $\eta$, $\beta\in(-1,0]$ and $\ovd$). 

Assume now that $\nabla \phi_{i_1}$ and $\nabla\phi_{i_2}$ are not linearly independent. We claim that $\xi\in\TT^3$. Otherwise, if $\xi_1\in\DD$, $\phi_{i_1}$
and $\phi_{i_2}$ would not depend on $z_1$. Now, let $\xi'=(1,\xi_2,\xi_3)$. Then it
would be impossible that $\phi$ satisfies the assumptions of the theorem at $\xi'$, since
$\nabla \phi_{i_1}(\xi')$ and $\nabla \phi_{i_2}(\xi')$ are linearly dependent, and
since $\phi_{i_1}$ and $\phi_{i_2}$ do not depend on $z_1$, conditions (2), (b), (c) and (d) cannot happen. We now argue as before, but using Theorem \ref{thm:volumelinearlydependent} to get a neighbourhood $\mathcal U(\xi)$ of $\xi$ and $C(\xi)>0$ such that, for all $\eta\in\TT^3,$ for all $\beta\in(-1,0]$, for all $\ovd\in(0,2]^3$, 
$$V_\beta\left(\phi^{-1}(S(\eta,\ovd))\cap\mathcal U(\xi)\right)\leq C(\xi)\delta_1^{2+\beta}\delta_2^{2+\beta}\delta_3^{2+\beta}.$$
We then cover $A_2$ by a finite number of these sets $\mathcal U(\xi_{2,1}),\dots,\mathcal U(\xi_{2,q})$. We define $\veps_2\in(0,\veps_1)$ such that, for all $z\in B_2:=B_1\backslash \bigcup_k \mathcal U(\xi_{2,k})$, $|\phi_{j_1}(z)|<1-2\veps_2$ and $|\phi_{j_2}(z)|<1-2\veps_2$ for at least two different integers $j_1,j_2$ in $\{1,2,3\}.$

We finally set 
$$A_3=\left\{\xi\in B_2:\ \textrm{there exists }j\in\{1,2,3\},\ \phi_j(z)\in\TT\right\}.$$
Arguing as before, using Theorem \ref{thm:kosinski}, we get a finite covering 
$\mathcal U(\xi_{3,1}),\dots,\mathcal U(\xi_{3,r})$ of $A_3$ and constants $C(\xi_{3,1}),\dots,C(\xi_{3,r})>0$ such that, for all $\eta\in\TT^3$, for all $\beta\in(-1,0]$, for all $\ovd\in(0,2]^3,$
$$V_\beta\left(\phi^{-1}(S(\eta,\ovd))\cap\mathcal U(\xi_{3,k})\right)\leq C(\xi_{3,k})\delta_1^{2+\beta}\delta_2^{2+\beta}\delta_3^{2+\beta}.$$
There also exists $\veps_3\in(0,\veps_2)$ such that, provided $z\in B_3:=A_3\backslash \bigcup_k \mathcal U(\xi_{3,k})$, $|\phi_j(z)|<1-\veps_3$ for all $j=1,2,3$. 

Let now $\ovd\in(0,2]^3$ and assume that at least one of $\delta_1,\delta_2,\delta_3$ is less than $\veps_3$. Then 
$$\phi^{-1}\big(S(\eta,\ovd)\big)=\left(\phi^{-1}(S(\eta,\ovd))\cap \bigcup_{j,k}\mathcal U(\xi_{j,k})\right)\cup \left(\phi^{-1}(S(\eta,\ovd))\cap B_3\right).$$
Now, $\phi^{-1}\big(S(\eta,\ovd)\big)\cap B_3$ is empty, so that 
$$V_\beta(\phi^{-1}(S(\eta,\ovd))\leq \left(\sum_{j,k}C(\xi_{j,k})\right)\delta_1^{2+\beta}\delta_2^{2+\beta}\delta_3^{2+\beta}$$
(observe that the sum is finite). We conclude by applying Lemma \ref{lem:continuityh2}.

\medskip

We now prove that the conditions arising in Theorem \ref{thm:main} are necessary to ensure 
continuity of $C_\phi$ on $H^2(\DD^3)$. For points $\xi\in\TT^3$ such that $\phi(\xi)\in\TT^3,$ this is a consequence of \cite{KSZ08}.
For points $\xi\in\TT^3$ such that there exists $I\subset\{1,2,3\},$ $\phi_I(\xi)\in\TT^2,$
this follows from Theorem \ref{thm:necconditions}.

\section{Examples}

We shall now exhibit examples showing the different cases which can occur with $\phi_I(\xi)\in\TT^2$, $|I|=2$, $\nabla \phi_{i_1}$ and $\nabla\phi_{i_2}$ are linearly
dependent. In what follows, $\veps_0$ (resp. $\veps_1$) is the positive real number coming from Lemma \ref{lem:eps0} (resp. Lemma \ref{lem:eps1}).

\begin{example}
 Let $\veps\in[-\veps_0,\veps_0]$ and 
 \begin{align*}
 \phi(z)&=\Bigg(\frac12\left(z_1z_2+\left(\frac{1+z_3}2\right)+i\veps(z_3-1)^2\right), 
 \frac12\left(z_1z_2+\left(\frac{1+z_3}2\right)\right),0\Bigg).
 \end{align*}
 Then $\phi$ maps $\DD^3$ into itself and $C_\phi$ is continuous on $H^2(\DD^3)$ if and only if $\veps\neq 0.$
\end{example}
\begin{proof}
 Lemma \ref{lem:eps0} immediately implies that $\phi$ is a self-map of $\DD^3$ and that, for $I=\{1,2\}$, $\phi_I(\xi)\in\TT^2\implies \xi=(e^{i\alpha},e^{-i\alpha},1)$
 for some $\alpha\in\mathbb R$. By circular symmetry, we may assume that $\xi=e$. 
 Now,
 \begin{align*}
  \Imm \phi_1(\theta)&=\frac14\left(\theta_1+\theta_2+\theta_3\right)-\veps\frac{\theta_3^2}2+O\left(|\theta_1|^3+|\theta_2|^3+|\theta_3|^3\right)\\
  \Imm \phi_2(\theta)&=\frac14\left(\theta_1+\theta_2+\theta_3\right)+O\left(|\theta_1|^3+|\theta_2|^3+|\theta_3|^3\right)\\
  \Ree \phi_j(\theta)&=1-\frac 14(\theta_1+\theta_2)^2-\frac18\theta_3^2+O\left(|\theta_1|^3+|\theta_2|^3+|\theta_3|^3\right).
 \end{align*}
Therefore, $s(\phi,I,e)=2$ and $r(\phi,I,e)=(1,0)$ provided $\veps<0$, $(0,1)$ provided $\veps>0$, $(0,0)$ provided $\veps=0$.
Hence the conclusion follows from Theorem \ref{thm:main}.
\end{proof}

It could be observed that, for $\veps=0,$ $C_\phi$ is continuous on $A^2(\DD^3)$.

\begin{remark}
 In \cite{Ko22}, it is written that first order conditions, that is conditions on the first order derivative of $\phi$,
 are not sufficient to characterize the continuity of $C_\phi$ on $H^2(\DD^3)$. The argument is based on the functions
 $\phi(z)=(z_1z_2z_3,z_1z_2z_3,0)$ and $\psi(z)=((z_1+z_2+z_3)/3,(z_1+z_2+z_3)/3,0)$ and essentially reduces to the following fact:
 $\phi_{\{1,2\}}(e)=\psi_{\{1,2\}}(e)=(1,1)$, $\nabla \phi(e)=\nabla \psi(e)$, $C_\psi$ is continuous on $H^2(\DD^3)$
 whereas $C_\phi$ is not. However, there are other points $\xi\in\TT^2$ such that $\phi(\xi)\in\TT^2\times\DD,$
 which is not the case for $\psi$. We think that our example is more convincing to illustrate that the first order conditions are not sufficient
 to characterize the continuity of a composition operator on $H^2(\DD^3)$. Indeed, all maps of our family have the same
 points $\xi\in\overline{\DD}^3$ such that $\phi_{\{1,2\}}(\xi)\in\TT^2\times\DD,$ and the same first order derivative
 at these points.
\end{remark}

\begin{example}
 Let $a,b,c\in [-\veps_1,\veps_1]$ and consider $\phi$ defined on $\DD^3$ by
 $$\phi(z)=(F_0(z_1)F_0(z_2)F_0(z_3),F_a(z_1)F_b(z_2)F_c(z_3),0)$$
 where
 $$F_\veps(z)=\frac{3+6z-z^2}8+2i\veps(z-1)^2-i\veps(z-1)^3.$$
 Then $\phi$ is a self-map of $\DD^3$ and $C_\phi$ is continuous on $H^2(\DD^3)$ if and only if $ab+bc+ca>0.$
\end{example}
\begin{proof}
 Lemma \ref{lem:eps1} implies that $\phi$ is a self-map of $\DD^3$ and that, for $I=\{1,2\}$,
 $\phi_I(\xi)\in\TT^2\iff \xi=e.$ We observe that
 \begin{align*}
  \Imm \phi_1(\theta)&=\frac12(\theta_1+\theta_2+\theta_3)+O\left(|\theta_1|^3+|\theta_2|^3+|\theta_3|^3\right)\\
  \Imm \phi_2(\theta)&=\frac 12(\theta_1+\theta_2+\theta_3)-a\theta_1^2-b\theta_2^2-c\theta_3^2+O\left(|\theta_1|^3+|\theta_2|^3+|\theta_3|^3\right)\\
  \Ree \phi_j(\theta)&=1-\frac 18(\theta_1+\theta_2+\theta_3)^2+O\left(|\theta_1|^3+|\theta_2|^3+|\theta_3|^3\right).
 \end{align*}
Hence we now have $s(\phi,I,e)=1$. To compute $r(\phi,I,e)$ we do the change of variables $(\theta_1,\theta_2,\theta_3)\mapsto (\theta_1-\theta_2-\theta_3,\theta_2,\theta_3)$ so that,
in these new coordinates,
\begin{align*}
 \Imm \phi_2(\theta)&=\frac 12\theta_1-a\theta_1^2+2a\theta_1\theta_2+2a\theta_1\theta_3-\left((a+b)\theta_2^2+(a+c)\theta_3^2+2a\theta_2\theta_3\right)\\
 &\quad\quad+O\left(|\theta_1|^3+|\theta_2|^3+|\theta_3|^3\right).
\end{align*}
Hence, using the notations of Section \ref{sec:statement}, 
$$R(\theta)=(a+b)\theta_2^2+2a\theta_2\theta_3+(a+c)\theta_3^2.$$
The signature of $R$ is $(2,0)$ or $(0,2)$ if and only if 
$$4a^2-4(a+b)(a+c)<0\iff ab+bc+ca>0.$$
\end{proof}

It remains to provide an example with $s(\phi,I,\xi)=3$. This is easier. Indeed, consider 
$$\phi(z)=\left(\frac{z_1+z_2+z_3}3,\frac{z_1+z_2+z_3}3,0\right).$$
Then $\phi$ is a self-map of $H^2(\DD^3)$. Let $I=\{1,2\}$ and $\xi\in\TT^3,$
so that $\phi_I(\xi)\in\TT^2.$ By symmetry, we may assume that $\xi=e.$
Then for $j=1,2,$
\begin{align*}
   \Imm \phi_j(\theta)&=\frac13(\theta_1+\theta_2+\theta_3)+O\left(|\theta_1|^2+|\theta_2|^2+|\theta_3|^2\right)\\
   \Ree \phi_j(\theta)&=1-\frac 16\left(\theta_1^2+\theta_2^2+\theta_3^2\right)+O\left(|\theta_1|^3+|\theta_2|^3+|\theta_3|^3\right).
\end{align*}
Therefore $s(\phi,I,e)=3$ and $C_\phi$ defines a bounded operator on $H^2(\DD^3)$. Observe that since $\phi$ is an affine map,
this was already known from the algorithm of \cite{BAYPOLY}.

\section{Compactness issues}

\subsection{A general compactness theorem}
Our aim now is to provide necessary and sufficient conditions for a composition operator
$C_\phi$ acting on $H^2(\DD^d)$ with $d=2$ or $3$ to be compact.
We first state a general result implying the compactness of $C_\phi$ on $H^2(\DD^d)$ by only estimating
the $V_\beta$-volume of preimages of Carleson boxes, in the spirit of Lemma \ref{lem:continuityh2}.
It will rely on the following precise estimate of the Carleson embedding, which is Proposition 9.3 in \cite{BAYPOLY}.
\begin{lemma}\label{lem:precisecarlesonembedding}
Let $d\geq 1.$ There exists $C(d)>0$ such that, 
for any $\beta\in(-1,0]$, for any $\mu$ a finite nonnegative Borel measure on $\overline{\DD^d}$ satisfying:
 \begin{eqnarray}\label{EQCARLESONPRECIS}
  \exists C_\mu>0,\ \forall\xi\in\TT^d,\ \forall\ovd\in(0,2]^d,\ \mu\big(S(\xi,\ovd)\big)\leq C_\mu V_\beta\big(S(\xi,\ovd)\big),
 \end{eqnarray}
 then for any $f\in A_\beta^2(\DD^d)$, 
 $$\left(\int_{\overline{\DD^d}}|f(z_1,\dots,z_d)|^2d\mu\right)^{1/2}\leq C(d)C_\mu\|f\|_{A^2_\beta(\DD^d)}.$$
\end{lemma}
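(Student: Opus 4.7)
The plan is to run the standard dyadic Carleson argument, but carefully tracking how every constant depends on $\beta\in(-1,0]$, since the crucial feature of the statement is that $C(d)$ depends only on the dimension.

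First, I would tile $\DD^d$ by dyadic product rectangles $R_{n,k}$, indexed by $n\in\NN^d$ and $0\le k_j<2^{n_j}$, where $R_{n,k}$ is the product over $j$ of the annular sector $\{1-2^{-n_j}\le|w|<1-2^{-n_j-1},\ \arg w\in[2\pi k_j2^{-n_j},2\pi(k_j+1)2^{-n_j})\}$. Each $R_{n,k}$ lies inside a Carleson box $S_{n,k}$ with radii $\delta_j\asymp 2^{-n_j}$, and a direct calculation gives
$$V_\beta(R_{n,k})\asymp V_\beta(S_{n,k})\asymp (\beta+1)^d\prod_{j=1}^d 2^{-n_j(2+\beta)},$$
with constants depending only on $d$. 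The Carleson hypothesis \eqref{EQCARLESONPRECIS} thus yields $\mu(R_{n,k})\le C_\mu V_\beta(S_{n,k})\lesssim C_\mu(\beta+1)^d\prod_j 2^{-n_j(2+\beta)}$.

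Second, I would invoke subharmonicity. For $z\in R_{n,k}$, I pick radii $r_j\asymp 2^{-n_j}$ so that the polydisc $P_z=\prod_j D(z_j,r_j)$ sits inside a fixed dilation $\tilde R_{n,k}$ of $R_{n,k}$ (still away from $\TT^d$). Iterating the sub-mean-value inequality for $|f|^2$ in each variable, then converting $dA(w_1)\cdots dA(w_d)$ into $dV_\beta(w)$ using that $(1-|w_j|^2)^\beta\asymp 2^{-n_j\beta}$ uniformly on $\tilde R_{n,k}$, produces
$$|f(z)|^2\le \frac{C_1^d}{(\beta+1)^d\prod_j 2^{-n_j(2+\beta)}}\int_{\tilde R_{n,k}}|f|^2\,dV_\beta.$$
Combining with the mass estimate, the factor $(\beta+1)^d\prod_j 2^{-n_j(2+\beta)}$ cancels exactly and
$$\int_{R_{n,k}}|f|^2\,d\mu\le \mu(R_{n,k})\sup_{R_{n,k}}|f|^2\le C_2^d\,C_\mu\int_{\tilde R_{n,k}}|f|^2\,dV_\beta.$$

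Third, I would sum over $(n,k)$. The family $\{\tilde R_{n,k}\}$ has bounded overlap with a constant $N_d$ depending only on $d$, so summing the previous inequality produces $\int_{\DD^d}|f|^2\,d\mu\le C(d)^2 C_\mu\,\|f\|_{A^2_\beta(\DD^d)}^2$, which gives the claim after extracting a square root. The possible contribution of $\mu$ to $\partial\DD^d$ is removed beforehand: for any boundary face $F=\TT^J\times\overline{\DD}^{\{1,\dots,d\}\setminus J}$, covering $F$ by Carleson boxes whose radii in the $j\in J$ directions tend to $0$ and applying \eqref{EQCARLESONPRECIS} forces $\mu(F)=0$.

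The main obstacle—and the only real point beyond the routine argument—is the uniformity in $\beta$. Both the pointwise subharmonicity bound and the Bergman volume of a Carleson box carry the identical factor $(\beta+1)^d\prod_j 2^{-n_j(2+\beta)}$, and it is precisely their cancellation that yields a final constant depending only on $d$. If one had used the area measure rather than $dV_\beta$ in the subharmonicity step without compensating, a factor $(\beta+1)^{-d}$ would have survived and blown up as $\beta\to -1$, defeating the very point of the lemma.
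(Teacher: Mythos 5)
The paper does not actually prove this lemma --- it is quoted verbatim from \cite{BAYPOLY} (Proposition 9.3) --- so your argument has to stand on its own, and it has a genuine gap located exactly at the point you single out as the crux, namely the uniformity in $\beta$. With the paper's normalization $dA_\beta=(\beta+1)(1-|z|^2)^\beta dA$, a one-variable computation gives $A_\beta\big(S(\xi,\delta)\big)\asymp\delta^{2+\beta}$ uniformly for $\beta\in(-1,0]$ (the factor $\beta+1$ in the definition exactly compensates $\int_{1-\delta}^1(1-r^2)^\beta\,dr\asymp\delta^{1+\beta}/(\beta+1)$), whereas for the thin dyadic shell $1-2^{-n}\le r<1-2^{-n-1}$ the radial $A_\beta$-mass is $\asymp 2^{-n(1+\beta)}\big(1-2^{-(1+\beta)}\big)\asymp(\beta+1)2^{-n(1+\beta)}$. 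Hence $V_\beta(R_{n,k})\asymp(\beta+1)^d\,V_\beta(S_{n,k})$, and your asserted comparability $V_\beta(R_{n,k})\asymp V_\beta(S_{n,k})\asymp(\beta+1)^d\prod_j 2^{-n_j(2+\beta)}$ is false for the box: the correct value is $V_\beta(S_{n,k})\asymp\prod_j 2^{-n_j(2+\beta)}$ with no factor $(\beta+1)^d$. Redoing your bookkeeping with the correct value, the hypothesis only gives $\mu(R_{n,k})\lesssim C_\mu\prod_j 2^{-n_j(2+\beta)}$, the subharmonicity bound still carries $(\beta+1)^{-d}\prod_j 2^{n_j(2+\beta)}$, and the advertised cancellation does not occur: you are left with $\int_{R_{n,k}}|f|^2d\mu\lesssim(\beta+1)^{-d}C_\mu\int_{\tilde R_{n,k}}|f|^2dV_\beta$ and a final constant of order $C(d)(\beta+1)^{-d/2}$, which blows up as $\beta\to-1^+$ --- precisely the failure mode the lemma must exclude, since in the proof of Theorem \ref{thm:compactnessh2} one lets $\beta\to-1$.

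This is not a repairable constant-chasing slip: the local inequality your scheme relies on fails uniformly in $\beta$. Take $d=1$, $f\equiv1$, and let $\mu$ be $c\,2^{-n(2+\beta)}$ times the Dirac mass at the center of $R_{n,0}$ (with $c>0$ a small absolute constant); this $\mu$ satisfies \eqref{EQCARLESONPRECIS} with $C_\mu\asymp1$, yet $\int_{R_{n,0}}|f|^2d\mu\asymp 2^{-n(2+\beta)}$ while $\int_{\tilde R_{n,0}}|f|^2dV_\beta\asymp(\beta+1)2^{-n(2+\beta)}$, so no absolute constant closes the termwise comparison as $\beta\to-1$; the global statement survives for this example only because $f$ keeps almost all of its $V_\beta$-mass outside $\tilde R_{n,0}$, near the boundary. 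To obtain a constant depending only on $d$ one needs a non-local argument --- for instance the maximal-function/distribution-function proof of the Carleson embedding, or the route taken in \cite{BAYPOLY} --- rather than a Whitney decomposition with box-by-box comparison. The peripheral parts of your write-up (the tiling, the bounded overlap of the $\tilde R_{n,k}$, and the observation that \eqref{EQCARLESONPRECIS} forces $\mu$ to put no mass on the faces $\TT^J\times\overline{\DD}^{\{1,\dots,d\}\setminus J}$ when $\beta>-1$) are fine.
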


\begin{theorem}\label{thm:compactnessh2}
 Let $\phi:\DD^d\to\DD^d$ be holomorphic. Let us assume that there exist $a>0$ and $\veps:(0,a]\to(0,+\infty)$
 with $\lim_{0^+}\veps=0$ and such that, for all $\ovd\in(0,a]^d,$ for all $\beta\in(-1,0],$ 
 for all $\eta\in\TT^d,$ 
 \begin{equation}\label{eq:compact}
   V_\beta(\phi^{-1}(S(\eta,\ovd)))\leq \veps(\inf(\delta_i:i=1,\dots,d))\delta_1^{2+\beta}\cdots \delta_d^{2+\beta}.
 \end{equation}
 Then $C_\phi$ is compact on $H^2(\DD^d)$.
\end{theorem}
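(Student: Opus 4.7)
The plan is to adapt the Bergman-based proof of Lemma \ref{lem:continuityh2} (from \cite{BAYPOLY}) from a boundedness statement to a compactness statement, by exploiting the quantitative vanishing $\veps(\delta_{\min})\to 0$ in hypothesis \eqref{eq:compact}. The strategy is to approximate $C_\phi$ in operator norm by a family of compact operators.

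For $r\in(0,1)$, let $M_r\in\mathcal L(H^2(\DD^d))$ denote the dilation $M_rf(z)=f(rz)$; in the monomial basis $(z^n)_{n\in\NN^d}$, $M_r$ is diagonal with eigenvalues $r^{|n|}\to 0$, so $M_r$ is compact. Since $C_\phi$ is bounded on $H^2(\DD^d)$ by \eqref{eq:compact} and Lemma \ref{lem:continuityh2}, the operator $T_r:=C_\phi M_r$, explicitly $T_rf(z)=f(r\phi(z))$, is compact. It thus suffices to prove that $\|C_\phi-T_r\|_{H^2\to H^2}\to 0$ as $r\to 1^-$.

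A quantitative rereading of the proof of Lemma \ref{lem:continuityh2} yields, for any holomorphic self-map $\psi:\DD^d\to\DD^d$,
\[
 \|C_\psi\|_{H^2\to H^2}^2\;\lesssim_d\; \sup_{\beta\in(-1,0]}\;\sup_{\eta\in\TT^d,\,\ovd\in(0,a]^d}\;\frac{V_\beta\big(\psi^{-1}(S(\eta,\ovd))\big)}{\prod_i\delta_i^{2+\beta}}.
\]
Applying this argument to $C_\phi-T_r$ rather than $C_\phi$, the extra factor $M_r$ suppresses the low-frequency part and amounts to restricting the Carleson boxes to those whose closure meets $\partial\DD^d$ with $\min_i\delta_i$ of order at most $C(1-r)$. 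Invoking \eqref{eq:compact} for such boxes gives
\[
 \|C_\phi-T_r\|_{H^2\to H^2}^2\;\lesssim\;\sup_{\min_i\delta_i\leq C(1-r)}\veps(\min_i\delta_i)\;\leq\;\veps\big(C(1-r)\big),
\]
which tends to $0$ as $r\to 1^-$, completing the proof.

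The main obstacle is the quantitative rereading just invoked, namely the control of $\|C_\phi-T_r\|$ by Carleson-box data restricted to small minimal radius. The proof of Lemma \ref{lem:continuityh2} in \cite{BAYPOLY} proceeds by embedding $H^2(\DD^d)$ into a sum of weighted Bergman spaces $A^2_\beta$ and applying a Carleson embedding in the spirit of Lemma \ref{lem:precisecarlesonembedding}; a safer alternative is to observe that \eqref{eq:compact} directly makes each pushforward $V_\beta\circ\phi^{-1}$ a \emph{vanishing} Bergman Carleson measure (in the little-oh sense of $\min_i\delta_i\to 0$), so that each embedding $A^2_\beta\hookrightarrow L^2(V_\beta\circ\phi^{-1})$ is compact (a standard argument splitting $f$ into interior and near-boundary parts). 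Running the reduction of Lemma \ref{lem:continuityh2} with ``compact'' in place of ``bounded'' at each step then yields the compactness of $C_\phi$, bypassing the localization argument sketched above.
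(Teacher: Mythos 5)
Your overall strategy---approximate $C_\phi$ by compact operators and exploit the vanishing factor $\veps$---is the same as the paper's, which uses the finite-rank truncations $T_N=C_\phi\circ P_N$ where your sketch uses $C_\phi M_r$. However, both versions of your argument have genuine gaps at exactly the points where the paper has to work hardest.

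First, the step ``$M_r$ suppresses the low-frequency part and amounts to restricting the Carleson boxes to those with $\min_i\delta_i\lesssim 1-r$'' is asserted, not proved, and it is not true as stated. The operator $C_\phi-C_\phi M_r$ is not a composition operator, so your claimed bound on $\|C_\psi\|$ does not apply to it directly; and the hypothesis \eqref{eq:compact} only yields a \emph{small} constant when at least one $\delta_i$ is small, so Carleson boxes with all radii large but touching the boundary are not controlled for free. The paper handles precisely this by restricting the pushforward measure to the shell $\DD^d\setminus(1-h)\DD^d$ and covering a large window intersected with that shell by $O(\delta_1/h+\delta_2/h)$ thin windows of width $h$, which converts the bound $\veps(\inf_i\delta_i)$ into a uniform bound $\veps(h)$ for \emph{all} windows; it then treats the interior part separately, decomposing $f-P_Nf$ into pieces that are high-frequency in at least one variable (the mixed pieces $s_2,s_3$ only gain a factor $(1-h)^N$, and their norms grow like $(N+1)^{1+\beta}$, so the balance $h_N=1/\log(N+1)$ is delicate). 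None of this is present in your sketch, and the ``restriction to small boxes'' you invoke is exactly what this two-part decomposition is needed to justify.

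Second, your ``safer alternative''---showing each embedding $A^2_\beta\hookrightarrow L^2(V_\beta\circ\phi^{-1})$ is compact and then ``running the reduction of Lemma \ref{lem:continuityh2} with compact in place of bounded''---does not go through. The passage from the Bergman scale to $H^2$ is a limiting argument $\beta\to-1^-$, and compactness on each $A^2_\beta$ does not transfer to the limit space $H^2$ without uniformity: one needs a \emph{single} sequence of finite-rank operators $T_N$ with $\|C_\phi-T_N\|_{\mathcal L(A^2_\beta)}\leq\omega_N$ where $\omega_N$ is independent of $\beta\in(-1,0]$ and $\omega_N\to0$; only then can one let $\beta\to-1$ in the inequality $\|C_\phi f-T_Nf\|_\beta^2\leq\omega_N\|f\|_\beta^2$ for polynomials $f$ and conclude in $H^2$. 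Qualitative compactness ``at each $\beta$'' gives no such uniform $\omega_N$, so this route is incomplete as described. To repair your proof you would need to supply the uniform-in-$\beta$ quantitative estimate for a fixed family of compact approximants, which essentially forces you back to the paper's covering and splitting argument.
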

\begin{proof}
 Our strategy, to prove compactness is to approach
 $C_\phi$ by an operator $T_N$ with rank $\leq N^d$, for all $N\in\NN,$  such that, for all $\beta\in(-1,0]$, $\|C_\phi-T_N\|_{\mathcal L(A_\beta^2)}\leq\omega_N$
 with $\omega_N$ which does not depend on $\beta$ and $\lim_{N\to +\infty}\omega_N=0.$
 Letting $\beta$ to $-1$, we then find a sequence of operators of finite rank which converges to $C_\phi$
 in $\mathcal L(H^2(\DD^d))$ leading to the compactness of $C_\phi$. Therefore, the proof is related to the problem
 of finding an upper bound for the approximation numbers of $C_\phi$ and we are inspired by the proof of \cite[Theorem 5.1]{LQR12}.
 
 Let us proceed with the proof. For the sake of simplicity, we shall do it only for $d=2$. As explained in Section \ref{sec:howto},
 we may assume that \eqref{eq:compact} is satisfied for $W(\eta,\ovd)$ instead of $S(\eta,\ovd)$. A standard argument shows that 
 we may assume that $\veps$ is nondecreasing. We may also observe that 
 the assumptions imply that $C_\phi$ is continuous on $A^2_\beta(\DD^2)$, $\beta\in(-1,0]$ as well as on $H^2(\DD^2)$,
 with $\sup_{\beta\in(-1,0]}\|C_\phi\|_{\mathcal L(A^2_\beta)}<+\infty$ (see Section 9.2 of \cite{BAYPOLY}).
 
 Let $N\in\NN$ and for $f=\sum_{\alpha\in\NN^2}a_\alpha z_1^{\alpha_1}z_2^{\alpha_2},$ let us consider
 $P_N(f)=\sum_{|\alpha_i|<N}a_\alpha z_1^{\alpha_1}z_2^{\alpha_2},$ whose rank is $N^d.$ We then set $T_N=C_\phi\circ P_N$.
Assume for a while that $f$ is a polynomial and let $h\in(0,1)$. We set $V_{\phi,\beta}$ as the image of $V_\beta$ under $\phi$
and $\mu_{h,\phi,\beta}$ its restriction to $\DD^2\backslash (1-h)\DD^2.$ We may write
\begin{eqnarray}
 \|C_\phi(f)-T_N(f)\|^2_{\beta}&=&\int_{\DD^2}|(f-P_N(f))\circ\phi|^2 dV_\beta \nonumber \\
 &=&\int_{\DD^2} |f-P_N(f)|^2 dV_{\phi,\beta} \nonumber \\
 &=&\int_{(1-h)\DD^2} |f-P_N(f)|^2 dV_{\phi,\beta} + \int_{\DD^2} |f-P_N(f)|^2 d\mu_{h,\phi,\beta} \label{eq:pfcompact1}
\end{eqnarray}
Let us first handle the second term. We shall apply Lemma \ref{lem:precisecarlesonembedding} with $\mu=\mu_{h,\phi,\beta}.$
Let $\eta\in\TT^2$ and $\ovd\in(0,2]^2.$ If $\delta_1\leq h$ or $\delta_2\leq h,$ then
$$\mu_{h,\phi,\beta}(W(\eta,\ovd))\leq V_{\phi,\beta}(W(\eta,\ovd))\leq \veps(h) \delta_1^{2+\beta}\delta_2^{2+\beta}.$$
If $\delta_1>h$ and $\delta_2>h,$ then we cover $W(\eta,\ovd)\cap (\DD^2\backslash(1-h)\DD^2)$ as follows. Assume for instance
that $\eta=(1,1)$ and let $p=\lfloor \frac{2\delta_1}{h}\rfloor +1$, $q=\lfloor \frac{2\delta_2}h\rfloor+1$. 
Define $(\theta_j)_{j=1,\dots,p+q}$ by
$$\begin{array}{llll}
   \theta_1=-\delta_1,&\delta_2=-\delta_1+h,&\dots,&\delta_p=-\delta_1+(p-1)h\\
   \theta_{p+1}=-\delta_2,&\delta_{p+2}=-\delta_2+h,&\dots,&\delta_{p+q}=-\delta_2+(q-1)h.
  \end{array}$$
We also set, for $j=1,\dots,p,$ $\eta(j)=(e^{i\theta_j},1)$ and $\ovd(j)=(h,\delta_2)$,
whereas, for $j=p+1,\dots,p+q,$ we set $\eta_j=(1,e^{i\theta_j})$ and $\ovd(j)=(\delta_1,h).$
We claim that 
$$W(\eta,\ovd)\cap (\DD^2\backslash(1-h)\DD^2)\subset\bigcup_{j=1}^{p+q}W(\eta(j),\ovd(j)).$$
Indeed, pick $z=(r_1e^{it_1},r_2e^{it_2})$ in $W(\eta,\ovd)\cap (\DD^2\backslash(1-h)\DD^2).$
We may assume $r_1\geq 1-h.$ Then there exists $j\in\{1,\dots,p\}$ such that $t_1\in(\theta_j,\theta_j+h)$
and it is plain that $z\in W(\eta(j),\ovd(j))$. 
 
Hence, provided $\delta_1>h$ and $\delta_2>h$
\begin{align*}
 \mu_{h,\phi,\beta}(W(\eta,\ovd))&=\mu_{h,\phi,\beta}(W(\eta,\ovd)\cap (\DD^2\backslash(1-h)\DD^2))\\
 &\leq \sum_{j=1}^{p+q}\mu_{h,\phi,\beta}(W(\eta(j),\ovd(j))\\
 &\leq p\veps(h)h^{2+\beta}\delta_2^{2+\beta}+q\veps(h)\delta_1^{2+\beta}h^{2+\beta}\\
 &\leq \veps(h)\left(\frac{4\delta_1}h h^{2+\beta}\delta_2^{2+\beta}+\frac{4\delta_2}h \delta_1^{2+\beta}h^{2+\beta}\right)\\
 &\leq 8\veps(h)\delta_1^{2+\beta}\delta_2^{2+\beta}.
\end{align*}
By applying Lemma \ref{lem:precisecarlesonembedding}, there exists $C>0$ which does not depend neither on $\beta\in(-1,0]$
nor on $h\in(0,1)$ nor on $N\in\mathbb N$ such that, for all holomorphic polynomials $f,$
$$\int_{\DD^2}|f-P_N(f)|^2 d\mu_{h,\phi,\beta}\leq C \veps^2(h) \|f-P_N(f)\|^2_\beta\leq C\veps^2(h)\|f\|^2_\beta.$$
We turn to the first term of \eqref{eq:pfcompact1}. We decompose $f-P_N(f)$ as 
\begin{align*}
 (f-P_N(f))(z)&=\sum_{k,l\geq N}a_{k,l}z_1^k z_2^l+\sum_{k<N}\sum_{l\geq N}a_{k,l}z_1^k z_2^l+\sum_{k\geq N}\sum_{l<N}a_{k,l}z_1^kz_2^l\\
 &=:z_1^Nz_2^N s_1(z)+z_2^N s_2(z)+z_1^N s_3(z).
\end{align*}
Therefore, 
\begin{align*}
 \int_{(1-h)\DD^2} |f-P_N(f)|^2 dV_{\phi,\beta}&\leq (1-h)^{2N}\int_{\DD^2}|s_1|^2 dV_{\phi,\beta}+(1-h)^{N}\int_{\DD^2}|s_2|^2 dV_{\phi,\beta}\\
 &\quad\quad\quad+(1-h)^{N}\int_{\DD^2}|s_3|^2 dV_{\phi,\beta}\\
 &\leq C'\left((1-h)^{2N}\|s_1\|_\beta^2+(1-h)^N \|s_2\|_\beta^2+(1-h)^N \|s_3\|_\beta^2\right)
\end{align*}
where $C'=\sup\left(\|C_\phi\|^2_{\mathcal L(A_\beta^2)}:\ \beta\in(-1,0]\right)<+\infty.$
Now,
\begin{align*}
 \|s_1\|_\beta^2&=\sum_{k,l\geq N}\frac{|a_{k,l}|^2}{(k-N+1)^{\beta+1}(l-N+1)^{\beta+1}}\\
 &=\sum_{k,l\geq N}\frac{|a_{k,l}|^2}{(k+1)^{\beta+1}(l+1)^{\beta+1}}\times \frac{(k+1)^{\beta+1}(l+1)^{\beta+1}}{(k-N+1)^{\beta+1}(l-N+1)^{\beta+1}}\\
 &\leq (N+1)^{2+2\beta}\|f\|_{\beta}^2.
\end{align*}
Similarly,
$$\|s_2\|_\beta^2,\|s_3\|_\beta^2\leq (N+1)^{1+\beta}\|f\|_\beta^2.$$
Summarizing what we have done, since $\beta\leq 0,$ we have obtained 
$$\|C_\phi(f)-T_N(f)\|_\beta^2\leq C''\left(\veps^2(h)+(N+1)^2 (1-h)^{2N}+(N+1)(1-h)^N\right)\|f\|_\beta^2$$
where $C''>0$ only depends on $C$ and $C'$.
We choose $h_N=1/\log(N+1)$ and we set
$$\omega_N= C''\left(\veps^2(h_N)+(N+1)^2 (1-h_N)^{2N}+(N+1)(1-h_N)^N\right)$$
which goes to $0$ as $N$ tends to $+\infty$. We get, for all polynomials $f$ and all $\beta\in(-1,0]$,
$$\|C_\phi(f)-T_N(f)\|^2_\beta\leq \omega_N \|f\|_\beta^2.$$
Letting $\beta$ to $-1$ yields $\|C_\phi-T_N\|_{\mathcal L(H^2)}\leq \sqrt{\omega_N}$, so that $C_\phi$ is compact
as a limit of finite rank operators.
\end{proof}
We shall also need a necessary condition for compactness. The following corollary of \cite[Theorem 2.3]{Jaf91} will be enough for us.
\begin{lemma}\label{lem:noncompacth2}
Let $\phi:\DD^d\to\DD^d$ be holomorphic and belonging to $\mathcal C(\overline{\DD^d})$. Assume that $C_\phi$ is compact on $H^2(\DD^d)$. For any sequence $(\ovd(n))$
of $(0,2]^d$ such that $\delta_1(n)\cdots\delta_d(n)$ goes to $0,$
for any $\eta\in\TT^d,$
\begin{equation}\label{eq:noncompactness} 
\frac{\sigma_d(\{e^{i\theta}\in\TT^d:\ \phi(\theta)\in S(\eta,\ovd(n))\})}{\delta_1(n)\cdots \delta_d(n)}\xrightarrow{n\to+\infty}0.
\end{equation}
\end{lemma}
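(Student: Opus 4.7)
The plan is to argue by contradiction using normalized reproducing kernels as weakly null test vectors, a standard device going back to Shapiro for detecting non-compactness. Suppose the conclusion fails: then there exist $\eta\in\TT^d$, a constant $c>0$, and a sequence $\ovd(n)\in(0,2]^d$ with $\prod_{k=1}^d\delta_k(n)\to 0$ such that $\sigma_d(E_n)\geq c\prod_k\delta_k(n)$ for every $n$, where $E_n=\{e^{i\theta}\in\TT^d:\phi(\theta)\in S(\eta,\ovd(n))\}$. Since each $\delta_k(n)\leq 2$, the product tending to zero forces $\min_k\delta_k(n)\to 0$, so after passing to a subsequence one fixes an index $k_0$ with $\delta_{k_0}(n)\to 0$.

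Set $c_k(n)=\min(\delta_k(n),1)$ and $a(n)=\bigl((1-c_k(n))\eta_k\bigr)_{k=1}^d\in\DD^d$, and test $C_\phi$ against the normalized reproducing kernels
$$k_{a(n)}(z)=\prod_{k=1}^d\frac{(1-|a_k(n)|^2)^{1/2}}{1-\overline{a_k(n)}\,z_k},\qquad \|k_{a(n)}\|_{H^2(\DD^d)}=1.$$
For any polynomial $p$, the reproducing property gives $\langle p,k_{a(n)}\rangle=p(a(n))\prod_k(1-|a_k(n)|^2)^{1/2}$, which tends to zero because $c_{k_0}(n)\to 0$ kills the product while $|p(a(n))|$ stays bounded; by density, $k_{a(n)}\to 0$ weakly in $H^2(\DD^d)$, and compactness of $C_\phi$ then gives $\|C_\phi k_{a(n)}\|_{H^2}\to 0$.

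For the contradiction one bounds the norm from below by restricting the integral over $\TT^d$ to $E_n$. On $E_n$, using $|\eta_k|=1$ and $|\phi_k(\theta)-\eta_k|\leq\delta_k(n)$,
$$|1-\overline{a_k(n)}\phi_k(\theta)|\leq|\eta_k-\phi_k(\theta)|+c_k(n)|\phi_k(\theta)|\leq 2\delta_k(n),$$
while a brief case analysis ($c_k(n)=\delta_k(n)$ when $\delta_k(n)\leq 1$, $c_k(n)=1$ otherwise) gives $1-|a_k(n)|^2=c_k(n)(2-c_k(n))\geq\delta_k(n)/2$ in both regimes. Combining these bounds,
$$|k_{a(n)}(\phi(\theta))|^2\geq\frac{1}{C_d\prod_k\delta_k(n)}\quad\text{on }E_n,$$
so that $\|C_\phi k_{a(n)}\|_{H^2}^2\geq \sigma_d(E_n)/\bigl(C_d\prod_k\delta_k(n)\bigr)\geq c/C_d>0$, contradicting weak-to-norm convergence.

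The only delicate point is that the hypothesis controls only the product $\prod_k\delta_k(n)$ and not the individual factors, which forces the truncation $c_k(n)=\min(\delta_k(n),1)$ so that $a(n)$ actually lies in $\DD^d$, and requires verifying the lower bound $1-|a_k(n)|^2\gtrsim\delta_k(n)$ separately in the small- and large-$\delta_k$ regimes. Once this bookkeeping is in place, the proof is the classical weakly-null reproducing kernel test; as noted in the paper, this is also a direct polydisc corollary of Theorem 2.3 of \cite{Jaf91}.
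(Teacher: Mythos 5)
Your proof is correct. Note, however, that the paper does not actually prove this lemma: it simply states it as a corollary of Theorem 2.3 of \cite{Jaf91}, i.e.\ it invokes Jafari's characterization of (vanishing) Carleson measures for Hardy spaces of polydiscs, applied to the pullback of $\sigma_d$ under the boundary values of $\phi$. Your argument is therefore genuinely different: it is a self-contained weakly-null test, pairing the compact operator against normalized reproducing kernels $k_{a(n)}$ centred at points $a(n)=((1-\min(\delta_k(n),1))\eta_k)_k$ approaching $\eta$, and it only needs the elementary two-sided estimates $|1-\overline{a_k(n)}\phi_k(\theta)|\leq 2\delta_k(n)$ on $E_n$ and $1-|a_k(n)|^2\geq \delta_k(n)/2$, the latter checked in both the small- and large-$\delta_k$ regimes exactly as you do. What your route buys is independence from the Carleson-embedding machinery (you never need to know that the pullback measure is a Carleson measure, only that $C_\phi$ is compact and that $\phi\in\mathcal C(\overline{\DD^d})$ so that $\|C_\phi k_{a(n)}\|^2$ is computed by integrating $|k_{a(n)}\circ\phi|^2$ over $\TT^d$); what the citation buys is brevity. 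The only steps you leave implicit --- passing to the subsequence on which the ratio stays bounded below by $c$, and the identification of the $H^2$ norm of $k_{a(n)}\circ\phi$ with the boundary integral, which uses the continuity of $\phi$ up to $\overline{\DD^d}$ --- are routine and do not constitute gaps.
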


A first application of the work of this section can be done for affine symbols. 
If $\phi:\DD^d\to\DD^d$ is affine, for any $\xi\in\TT^d$ such that $\phi(\xi)\notin \DD^d,$ two families of trees $\mathbf R^{-1}_{\xi,I,J}$ and $\mathbf L^{-1}_{\xi,I,J}$ were introduced in \cite{BAYPOLY} and it was shown that $C_\phi$ is continuous on $H^2(\DD^d)$ if and only if $\mathbf R^{-1}_{\xi,I,J}\leq \mathbf L_{\xi,I,J}^{-1}$
for all possible choices of $\xi,I,J$. Using the methods of \cite{BAYPOLY} combined with Theorem \ref{thm:compactnessh2} and Lemma \ref{lem:noncompacth2}, we can prove that 
$C_\phi$ is compact on $H^2(\DD^d)$ if and only if $\mathbf R^{-1}_{\xi,I,J}< \mathbf L_{\xi,I,J}^{-1}$
for all possible choices of $\xi,I,J$. Details are left to the interested reader.

\subsection{Compact composition operators on $H^2(\DD^2)$}
We shall use the results of the previous subsection to almost characterize compact composition operators on $H^2(\DD^2)$. Recall (see \cite[Theorem 6]{Ko22}) that
for $\phi\in \mathcal O(\DD^2,\DD^2)\cap\mathcal C^1(\overline{\DD^2}),$
$C_\phi$ is bounded on $H^2(\DD^2)$ if and only if $\mathrm d\phi(\xi)$ 
is invertible for all $\xi\in\TT^2$ such that $\phi(\xi)\in\TT^2.$

We shall need to introduce two new definitions. We start from $\varphi\in\mathcal O(\DD^2,\DD)\cap\mathcal C^3( \overline{\DD^2})$ and let $\xi\in\TT^2$
such that $\varphi(\xi)\in\TT.$ Without loss of generality, we will assume $\xi=e$ and $\varphi(\xi)=1$. We will say that $\varphi$ is truely two-dimensional at $\xi$ provided
\begin{itemize}
\item $\frac{\partial\varphi}{\partial z_1}(e)\neq 0,$ $\frac{\partial\varphi}{\partial z_2}(e)\neq 0;$
\item let us write $\Imm \varphi(\theta)=L(\theta)+G(\theta)$ where $L$ is a linear
form and $G(\theta)=O(\theta_1^2+\theta_2^2)$. The map $U(\theta_1,\theta_2)=(L(\theta)+G(\theta),\theta_2)$ defines a diffeomorphism between two neighbourhoods $\mathcal U$ and $\mathcal V$ 
of $0$ in $\mathbb R^2$ since $\frac{\partial L}{\partial \theta_1}(0)\neq 0.$ We then may write
$$\Ree \varphi\circ U^{-1}(u_1,u_2)=1+ H(u_1,u_2).$$
We shall require, for a truely two-dimensional function at $\xi,$ that there does not exist $h$ such that $H(u_1,u_2)=h(u_1)$ in a neighbourhood of $0.$ 
\end{itemize}

We will say that $\varphi$ is truely two-dimensional at $\xi$ of order $2$ provided, writing
$\Ree \varphi(\theta)=1-Q(\theta)+o(\theta_1^2+\theta_2^2)$ with $Q$ a quadratic form,
that the signature of $Q$ is $(2,0)$. Observe that a truely two-dimensional function at $\xi$ 
of order $2$ is automatically truely two-dimensional at $\xi.$ Indeed, $\frac{\partial\varphi}{\partial z_1}(e)\neq 0$ and $\frac{\partial\varphi}{\partial z_2}(e)\neq 0$, otherwise
$\varphi$ would not depend on one of the two variables and the signature of $Q$ could not be $(2,0)$. Moreover, if we do the change of variables described above, we may write
$$\Ree \varphi\circ U^{-1}(u_1,u_2)=1-\tilde Q(u_1,u_2)+H(u_1,u_2)$$
where $\textrm{signature}(\tilde Q)=(2,0)$ and $H(u_1,u_2)=o(u_1^2+u_2^2)$. 

\begin{example}\label{ex:compact1}
Let $\varphi(z)=z_1z_2$. Then $\varphi$ is not truely two-dimensional at $e$. 
Indeed, we may write $\Imm \varphi(\theta)=f(\theta_1+\theta_2)$ and
$\Ree \varphi(\theta)=1+g(\theta_1+\theta_2)$. Let $U(\theta_1,\theta_2)=(f(\theta_1+\theta_2),\theta_2)$. A straightforward computation shows that $U^{-1}(u_1,u_2)=(f^{-1}(u_1)-u_2,u_2)$ so that 
$$\Ree\varphi\circ U^{-1}(u_1,u_2)=1+g(f^{-1}(u_1)).$$
Heuristically speaking, $\varphi$ is not truely two-dimensional since $\varphi(\theta)$
only depends on the single variable $\theta_1+\theta_2$.
\end{example}

\begin{example}\label{ex:compact2}
Let $\varphi(z)=\frac{z_1+z_2}2$. Then $\varphi$ is truely two-dimensional at $e$ of order $2$ (in fact, at any $\xi\in\TT^2$ such that $\varphi(\xi)\in\TT$). Indeed,
$$\Ree \varphi(\theta)=1-\frac 14(\theta_1^2+\theta_2^2)+o(\theta_1^2+\theta_2^2).$$
\end{example}

\medskip

The interest of working with a truely two-dimensional function of order 2 comes from the following result, which should be compared with Theorem \ref{thm:volumelinearlydependent}.
\begin{theorem}\label{thm:compactnessvolume}
Let $\varphi\in\mathcal O(\DD^2,\DD)\cap\mathcal C^3(\overline{\DD^3}).$ 
Assume that there exists $\xi\in\TT^2$ such that $\varphi(\xi)\in\TT$ and that $\varphi$
is truely two-dimensional at $\xi$ of order $2.$ Then there exist a neighbourhood $\mathcal U$ of $\xi$ and $C>0$ such that, for all $\beta\in(-1,0]$, for all $\eta\in\TT,$ for all $\delta>0,$
$$V_\beta\left(\left\{z\in\DD^2\cap\mathcal U:\ |\varphi(z)-\eta|<\delta\right\}\right)\leq
C\delta^{2+\beta+\frac 12}.$$
\end{theorem}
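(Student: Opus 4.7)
The plan is to mirror the structure of the proof of Theorem \ref{thm:volumelinearlydependent}: first establish an estimate on the angular measure $\lambda_2(\{\theta\in\mathcal{O}:|\varphi(\theta)-\eta|<\delta\})\lesssim\delta^{3/2}$ in some neighborhood $\mathcal{O}$ of $0$ in $\mathbb{R}^2$, and then lift it to a $V_\beta$-volume estimate on a suitable neighborhood $\mathcal{U}$ of $\xi$ in $\overline{\DD}^2$. Without loss of generality I take $\xi=e$ and $\varphi(\xi)=1$.

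For the angular step, I would exploit the definition of truely two-dimensional at $\xi$ of order $2$. Since $\partial\varphi/\partial z_k(e)\neq 0$ for $k=1,2$, the linear form $L(\theta)=\kappa_1\theta_1+\kappa_2\theta_2$ is nontrivial; after permuting coordinates one can assume $\kappa_1\neq 0$, so that $U(\theta_1,\theta_2)=(\mathrm{Im}\,\varphi(\theta),\theta_2)=(L(\theta)+G(\theta),\theta_2)$ defines a local diffeomorphism between neighborhoods $\mathcal{O}_1$ and $\mathcal{O}_2$ of $0$ with bounded Jacobians. In these coordinates $\mathrm{Im}\,\varphi\circ U^{-1}(u)=u_1$, while
\[
\mathrm{Re}\,\varphi\circ U^{-1}(u)=1-\tilde{Q}(u)+K(u),\qquad K(u)=o(|u|^2),
\]
with $\tilde{Q}$ positive definite of signature $(2,0)$. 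The key trick is to apply the parametrized Morse lemma (Lemma \ref{lem:parametrizedmorse}) with $u_1$ as the parameter and $u_2$ as the Morse variable: the restriction of $F(u):=-\tilde Q(u)+K(u)$ to $\{u_1=0\}$ has nonsingular (negative) second derivative at $0$, so there exist a diffeomorphism $\Gamma(u_1,u_2)=(u_1,\gamma(u_1,u_2))$ and a $\mathcal{C}^1$-function $h$ with
\[
\mathrm{Re}\,\varphi\circ U^{-1}\circ\Gamma^{-1}(v_1,v_2)=1-v_2^2+h(v_1),
\]
while $\mathrm{Im}\,\varphi\circ U^{-1}\circ\Gamma^{-1}(v)=v_1$ is preserved. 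Writing $\eta=x+iy$, the condition $|\varphi(\theta)-\eta|<\delta$ then reads $|v_1-y|<\delta$ and $|v_2^2-(1-x+h(v_1))|<\delta$. For each fixed $v_1$ the latter is a one-dimensional set of the form $\{v_2:|v_2^2-c|<\delta\}$ whose length is uniformly $\lesssim\sqrt{\delta}$ (the worst case occurring when $c$ is near $0$), so Fubini combined with the bounded Jacobians of $U$ and $\Gamma$ yields $\lambda_2(\cdots)\lesssim\delta\cdot\sqrt{\delta}=\delta^{3/2}$.

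To pass from the angular estimate to the $V_\beta$-estimate I would follow the last part of the proof of Theorem \ref{thm:volumelinearlydependent}. Writing $z_k=(1-\rho_k)e^{i\theta_k}$ and Taylor expanding $|\varphi(z)|^2$ at $e$, the condition $|\varphi(z)-\eta|<\delta$ forces $|\varphi(z)|^2\geq 1-2\delta$, which combined with $\partial\varphi/\partial z_k(e)>0$ forces $\rho_k\lesssim\delta$ inside a small convex neighborhood $\mathcal{U}_1$. A mean-value bound then gives $|\varphi(z)-\varphi(\theta)|\lesssim\delta$, so that $e^{i\theta}$ lies in the preimage of a slightly enlarged Carleson disc. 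Applying Lemma \ref{lem:volumeestimate4} and the angular bound yields
\[
V_\beta(\cdots)\lesssim \delta^{1+\beta}\cdot\delta^{1+\beta}\cdot\delta^{3/2}=\delta^{7/2+2\beta},
\]
which for $\beta\in(-1,0]$ is stronger than $\delta^{5/2+\beta}=\delta^{2+\beta+1/2}$ since $1+\beta\geq 0$.

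The main obstacle I anticipate is the careful invocation of the parametrized Morse lemma so that $v_1=u_1=\mathrm{Im}\,\varphi(\theta)$ is preserved in the change of variables: this is exactly what makes the two conditions coming from the real and imaginary parts of $\varphi-\eta$ decouple, splitting the measure into the $\delta$ (from $\mathrm{Im}$) times $\sqrt{\delta}$ (from $\mathrm{Re}$) budget. Once this decoupling is in place, the remaining steps are routine Fubini and change of variables.
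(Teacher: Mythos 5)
Your proposal is correct and follows essentially the same route as the paper's proof: the same reduction to an angular estimate $\lambda_2(\{\theta\in\mathcal O:|\varphi(\theta)-\eta|\leq\delta\})\lesssim\delta^{3/2}$ via the diffeomorphism $U(\theta)=(\mathrm{Im}\,\varphi(\theta),\theta_2)$ followed by the parametrized Morse lemma in the variable $u_2$ (yielding $\mathrm{Re}\,\varphi=1-v_2^2+h(v_1)$ with $\mathrm{Im}\,\varphi=v_1$ preserved), and the same lifting to the $V_\beta$-estimate via the radial bound $\rho_k\lesssim\delta$ and Lemma \ref{lem:volumeestimate4}. The only cosmetic difference is that you normalize $L$ by permuting coordinates rather than by a linear change of variables, which changes nothing of substance.
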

\begin{proof}
We may assume $\xi=e$ and $\varphi(\xi)=1$. We mimic the proof of Theorem \ref{thm:volumelinearlydependent}. We start with the following fact.

\medskip

{\bf Fact.} There exist $C>0$ and a neighbourhood $\mathcal O$ of $0$ in $\mathbb R^2$ such that, for all $\delta>0$, for all $\eta\in\TT,$
\begin{equation}
\lambda_2\left(\left\{\theta\in\mathcal O:\ |\varphi(\theta)-\eta|\leq \delta\right\}\right)\leq C\delta^{\frac 32}.
\end{equation}

\bigskip

We write
\begin{align*}
\Imm \varphi(\theta)&=L(\theta)+G(\theta)\\
\Ree \varphi(\theta)&=1-Q(\theta)+o(\theta_1^2+\theta_2^2)
\end{align*}
with $G(\theta)=o(|\theta_1|+|\theta_2|)$ and the signature of $Q$ equal to $(2,0)$.
By a linear change of variables, which does not modify the signature of $Q$, we may  assume that $L(\theta)=\theta_1.$ We then set $U(\theta)=(L(\theta)+G(\theta),\theta_2).$ 
There exist two neighbourhoods $\mathcal O_1$ and $\mathcal O_2$ of $0$ in $\mathbb R^2$ and $C_1>0$ such that $U$ is a diffeomorphism from $\mathcal O_1$ onto $\mathcal O_2$ and 
$$\left\{
\begin{array}{rcll}
|\det(\mathrm dU)|&\leq& C_1&\textrm{on }\mathcal O_1\\
|\det(\mathrm dU^{-1})|&\leq& C_1&\textrm{on }\mathcal O_2.
\end{array}\right.$$
Since $\mathrm dU(0)=\mathrm{Id},$ we can still write
$$\Ree \varphi\circ U^{-1}(u_1,u_2)=1-Q(u)+o(u_1^2+u_2^2).$$
We then apply the parametrized Morse lemma: there exist two neighbourhoods $\mathcal O'_2\subset\mathcal O_2$ and $\mathcal O_3$ of $0$, a constant $C_2>0$ and a diffeomorphism $V:\mathcal O_2'\to\mathcal O_3$, $V(u_1,u_2)=(u_1,V_2(u_1,u_2))$ and
a $\mathcal C^1$-map $h:\mathbb R\to\mathbb R$ such that, for all $v\in\mathcal O_3,$
$$\Ree \varphi\circ U^{-1}\circ V^{-1}(v_1,v_2)=1-v_2^2+h(v_1)$$
whereas we have still
$$\Imm \varphi\circ U^{-1}\circ V^{-1}(v_1,v_2)=v_1$$
as well as
$$\left\{
\begin{array}{rcll}
|\det(\mathrm dV)|&\leq& C_2&\textrm{on }\mathcal O'_2\\
|\det(\mathrm dV^{-1})|&\leq& C_2&\textrm{on }\mathcal O_3.
\end{array}\right.$$
Therefore, for any $\theta\in U^{-1}(\mathcal O'_2)=:\mathcal O$, for all $\eta\in\TT,$ for all $\delta>0,$
setting $v=V\circ U(\theta),$ we have 
$$\left\{
\begin{array}{rcl}
|v_1-\Imm (\eta)|&<&\delta\\
|1-v_2^2+h(v_1)-\Ree (\eta)|&<&\delta.
\end{array}\right.
$$
Fubini's theorem yields
$$\lambda_2\left(V\circ U\left(\mathcal O\cap \varphi^{-1}(S(\eta,\delta))\right)\right)\lesssim \delta^{3/2}$$
which in turn implies the fact. We finally deduce Theorem \ref{thm:compactnessvolume} from the fact following the method used in Theorem \ref{thm:volumelinearlydependent}.
\end{proof}

We are now ready to almost characterize compact composition operators on $H^2(\DD^2).$

\begin{theorem}\label{thm:compactnessdim2}
Let $\phi\in\mathcal O(\DD^2,\DD^2)\cap\mathcal C^3(\overline{\DD^2})$. If $C_\phi$ 
defines a compact composition operator on $H^2(\DD^2),$ then 
\begin{enumerate}
\item $\phi(\TT^2)\cap \TT^2=\varnothing;$
\item for all $\xi\in\TT^2,$ for all $j\in\{1,2\}$ such that $\phi_j(\xi)\in\TT,$ 
$\phi_j$ is truely two-dimensional at $\xi.$
\end{enumerate}
Conversely, assume that 
\begin{enumerate}
\item $\phi(\TT^2)\cap \TT^2=\varnothing;$
\item for all $\xi\in\TT^2,$ for all $j\in\{1,2\}$ such that $\phi_j(\xi)\in\TT,$ 
$\phi_j$ is truely two-dimensional at $\xi$ of order $2$.
\end{enumerate}
Then $C_\phi$ is compact on $H^2(\DD^2)$.
\end{theorem}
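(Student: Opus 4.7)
The plan is to treat necessity and sufficiency separately, relying on Lemma \ref{lem:noncompacth2} for the former and on Theorem \ref{thm:compactnessh2} (fed by Theorem \ref{thm:compactnessvolume}) for the latter.

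For necessity, assume $C_\phi$ compact. If $\phi(\xi) \in \TT^2$ for some $\xi \in \TT^2$, then boundedness of $C_\phi$ forces $\mathrm{d}\phi(\xi)$ to be invertible by the characterization in \cite{Ko22}, so the linear forms $L_1, L_2$ in the Julia--Caratheodory expansions of $\Imm \phi_1, \Imm \phi_2$ at $\xi$ are independent; the set $\{\theta: |L_j(\theta)| < \delta,\ j=1,2\}$ has $\lambda_2$-measure $\asymp \delta^2$, and on it $\phi(\theta) \in S(\phi(\xi), (C\delta, C\delta))$ for some $C > 0$, which contradicts \eqref{eq:noncompactness}. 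For (2), assume $\phi_j(\xi) \in \TT$ with $\phi_j$ not truly two-dimensional at $\xi$; after relabelling take $j=1$, $\xi=e$, $\phi_1(e)=1$. If some partial of $\phi_1$ vanishes at $e$, Lemma \ref{lem:julia} makes $\phi_1$ depend on a single variable and a one-dimensional Julia--Caratheodory expansion gives $\sigma_2(\{|\phi_1-1|<\delta\}) \asymp \delta$. Otherwise the definition itself provides the diffeomorphism $U$ with $\phi_1\circ U^{-1}(u) = (1+h(u_1))+iu_1$ depending only on $u_1$ (with $h(u_1)=O(u_1^2)$), so that $|\phi_1\circ U^{-1}(u)-1|\asymp |u_1|$ and the strip $\{|u_1|\lesssim\delta\}$ again has 2-dimensional measure $\asymp\delta$. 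Choosing $\eta=(1,\eta_2)$ for any $\eta_2\in\TT$ and $\ovd(n)=(1/n,2)$, the second Carleson constraint is trivial and
$$\frac{\sigma_2(\phi^{-1}(S(\eta,\ovd(n))))}{\delta_1(n)\delta_2(n)} \gtrsim \frac{1}{2},$$
contradicting Lemma \ref{lem:noncompacth2}.

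For sufficiency, I verify the hypothesis of Theorem \ref{thm:compactnessh2} via a finite open cover of $\overline{\DD^2}$. A point $\xi\in\overline{\DD^2}$ with $\phi(\xi)\in\DD^2$ admits a neighbourhood $\mathcal U$ on which $|\phi_j(z)|\leq 1-c$ for $j=1,2$, so $\phi^{-1}(S(\eta,\ovd))\cap\mathcal U=\varnothing$ as soon as $\min_i\delta_i<c$. Condition (1) forbids $\phi(\xi)\in\TT^2$ for $\xi\in\TT^2$, so every remaining $\xi\in\TT^2$ has exactly one component of $\phi(\xi)$ in $\TT$; say $\phi_1(\xi)\in\TT$ and $\phi_2(\xi)\in\DD$. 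Condition (2) makes $\phi_1$ truly two-dimensional at $\xi$ of order $2$, so Theorem \ref{thm:compactnessvolume} produces a neighbourhood $\mathcal U$ of $\xi$ and $C>0$ with
$$V_\beta\bigl(\{z\in\mathcal U\cap\DD^2:|\phi_1(z)-\eta_1|\leq\delta_1\}\bigr) \leq C\,\delta_1^{5/2+\beta}$$
uniformly in $\beta\in(-1,0]$ and $\eta_1\in\TT$. Shrinking $\mathcal U$ so that $|\phi_2(z)|\leq 1-c$ on $\mathcal U$, the preimage of $S(\eta,\ovd)$ in $\mathcal U$ is empty for $\delta_2<c$, while for $\delta_2\geq c$ the lower bound $\delta_2^{2+\beta}\geq c^2$ (uniform in $\beta\in(-1,0]$) yields $V_\beta(\phi^{-1}(S(\eta,\ovd))\cap\mathcal U) \leq C'\delta_1^{1/2}\cdot\delta_1^{2+\beta}\delta_2^{2+\beta}$. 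Summing over the finite cover and choosing $a$ smaller than every local constant $c$ yields \eqref{eq:compact} with $\veps(t)=C''t^{1/2}$, and Theorem \ref{thm:compactnessh2} gives compactness.

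The main subtlety is that, a priori, a point $\xi\in\overline{\DD^2}\setminus\TT^2$ could satisfy $\phi_j(\xi)\in\TT$, which falls outside the scope of Theorem \ref{thm:compactnessvolume}. However, the maximum modulus principle applied to the holomorphic slice of $\phi_j$ in the coordinate of $\xi$ lying in $\DD$ forces $\phi_j$ to be constant on that slice, and then the identity principle propagates the constancy, so that $\phi_j$ depends on a single variable throughout $\DD^2$. If $\phi_j$ is nonconstant, it attains boundary values at some $\xi'\in\TT^2$ where one of its partials vanishes, contradicting condition (2); if $\phi_j$ is constant, its value lies in $\DD$ (otherwise (2) is again violated at every $\xi'\in\TT^2$). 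Hence $\phi_j^{-1}(\TT)\subset\TT^2$, and the two types of neighbourhoods indeed cover $\overline{\DD^2}$ as required.
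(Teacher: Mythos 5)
Your proposal is correct and follows essentially the same route as the paper: necessity via Lemma \ref{lem:noncompacth2} applied to the lower bounds produced by the mean value theorem (for (1)) and by the diffeomorphism $U$ built into the definition of ``truely two-dimensional'' (for (2)); sufficiency via a finite covering of $\overline{\DD^2}$ combining Theorem \ref{thm:compactnessvolume} on boundary-touching points with Theorem \ref{thm:compactnessh2}. One remark: you rightly flag that one must know $\phi_j^{-1}(\TT)\subset\TT^2$ before invoking Theorem \ref{thm:compactnessvolume} (a point the paper silently skips), but your justification via ``the identity principle propagates the constancy'' is not quite right, since the constant slice $z_1\mapsto\phi_j(z_1,\xi_2)$ lies in the boundary of $\DD^2$; the clean fix is that this constant slice already yields a point $\xi'\in\TT^2$ with $\phi_j(\xi')\in\TT$ and $\frac{\partial\phi_j}{\partial z_1}(\xi')=0$, which violates hypothesis (2) directly (or, if one insists on propagating, one should use Lemma \ref{lem:julia} rather than the identity principle).
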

\begin{proof}
We first prove the necessary condition and we start from a compact composition operator
$C_\phi$ on $H^2(\DD^2)$. First assume that $\phi(\TT^2)\cap \TT^2\neq\varnothing,$ 
for instance $\phi(e)=e.$ Let $\delta>0$ be small and set $\ovd=(\delta,\delta).$ 
By the mean value theorem, there exists $c>0$ (which does not depend on $\delta$ provided $\delta$ is small enough) such that 
$$\left\{e^{i\theta}:\ \phi(\theta)\in S(e,\ovd)\right\}\supset [-c\delta,c\delta]^2.$$
This prevents \eqref{eq:noncompactness}  to be true and $C_\phi$ cannot be compact. 
Assume now that there exists $\xi\in\TT^2$ and $j\in\{1,2\}$ such that $\phi_j(\xi)\in\TT$ 
and $\phi_j$ is not truely two-dimensional at $\xi,$ say $j=1$, $\xi=e$ and $\phi_1(e)=1.$
If $\frac{\partial \phi_1}{\partial z_2}(e)=0,$ then $\phi_1$ only depends on $z_1$ and we
set $\ovd=(\delta,2)$ for small values of $\delta.$ Again, there exists $c>0$ such that
\begin{align*}
\left\{e^{i\theta}:\ \phi(\theta)\in S(e,\ovd)\right\}&=\left\{e^{i\theta}:\ |\phi_1(\theta)-1|<\delta\right\}\\
&\supset [-c\delta,c\delta]\times [-1,1]
\end{align*}
and again \eqref{eq:noncompactness} cannot be true. If $\frac{\partial \phi_1}{\partial z_1}(e)\neq 0$ and $\frac{\partial \phi}{\partial z_2}(e)\neq 0,$ since $\phi_1$
is not truely two-dimensional at $e,$ let $\mathcal U$ and $\mathcal V$ be two neighbourhoods of $0,$ $C>0$ and $U$ a $\mathcal C^1$-diffeomorphism from $\mathcal U$ onto $\mathcal V$
such that 
$$\left\{
\begin{array}{rcll}
|\det(\mathrm dU)|&\leq& C_1&\textrm{on }\mathcal O_1\\
|\det(\mathrm dU^{-1})|&\leq& C_1&\textrm{on }\mathcal O_2
\end{array}\right.$$
and
$$\left\{
\begin{array}{rcl}
\Imm \phi_1\circ U^{-1}(u_1,u_2)&=&u_1\\
\Ree \phi_1\circ U^{-1}(u_1,u_2)&=&1+h(u_1)
\end{array}\right.$$
with $h(u_1)=O(u_1)$. Therefore there exists $c>0$ such that, for all $\delta>0$,
for all $(u_1,u_2)\in ([-c\delta,c\delta]\times[-1,1])\cap\mathcal V,$
$$|\phi_1\circ U^{-1}(u_1,u_2)-1|<\delta.$$
This yields 
$$U^{-1}\big(([-c\delta,c\delta]\times[-1,1])\cap\mathcal V\big)\subset\phi^{-1}(S(e,\ovd))$$
for $\ovd=(\delta,2)$ which prevents \eqref{eq:noncompactness} to be true.

Let us turn to the positive part. It will follow by applying Theorem \ref{thm:compactnessvolume} with a suitable covering argument. Since $\phi(\TT^2)\cap\TT^2=\varnothing,$ there exists
$\veps_1\in(0,1)$ such that, for all $\xi\in\overline{\DD^2},$ there exists $j\in\{1,2\}$ with $|\phi_j(\xi)|\leq 1-2\veps_1.$
We then set 
$$A_1=\left\{\xi\in\DD^2:\ \exists j\in\{1,2\},\ \phi_j(\xi)\in\TT\right\}.$$
Let $\xi\in A_1$ and assume for instance that $\phi_1(\xi)\in\TT.$
Let $\mathcal U(\xi)$ and $C(\xi)$ be given by Theorem \ref{thm:compactnessvolume}.
Restricting $\mathcal U(\xi)$ if necessary, we may assume that $|\phi_2(z)|<1-\veps_1$ for all $z\in\mathcal U(\xi)$. Let $\beta\in(-1,0]$, $\eta\in\TT^2$ and $\ovd\in(0,2]^2.$
Assume first that $\delta_2\geq\veps_1.$ Then 
\begin{align}
V_\beta\left(\phi^{-1}(S(\eta,\ovd))\cap\mathcal U\right)&\leq V_\beta\left(\left\{z\in\overline{\DD^2}\cap \mathcal U:\ |\phi_1(z)-\eta_1|<\delta_1\right\}\right)\nonumber \\
&\leq C(\xi)\delta_1^{2+\beta+\frac 12}\nonumber \\
&\leq \frac{C(\xi)}{\veps_1^{2+\beta+\frac 12}}\delta_1^{2+\beta+\frac 12}\delta_2^{2+\beta+\frac 12}\nonumber \\
&\leq D(\xi) \delta_1^{2+\beta+\frac 12}\delta_2^{2+\beta+\frac 12}\label{eq:fincompact}
\end{align}
where we have set $D(\xi)=C(\xi)/\veps_1^{5/2}.$ On the other hand, if $\delta_2<\veps_1,$ then $\phi^{-1}(S(\eta,\ovd))\cap\mathcal U(\xi)=\varnothing$ so that 
\eqref{eq:fincompact} is verified for all $\ovd\in(0,2]^2.$
By compactness, we cover $A_1$ by finitely many $\mathcal U(\xi_1),\dots,\mathcal U(\xi_p)$
and we set $A_2=\overline{\DD^2}\backslash \bigcup_{k=1}^p \mathcal U(\xi_k).$ There
exists $\veps_2>0$ such that, provided $z\in B_2,$ $|\phi_1(z)|,\ |\phi_2(z)|<1-\veps_2$. 
Therefore, for all $\eta\in\TT^2$ and all $\ovd\in(0,2]^2$ with $\inf(\delta_1,\delta_2)<\veps_2,$ 
$$V_\beta\left(\phi^{-1}(S(\eta,\ovd))\right)\leq \left(\sum_{k=1}^p D(\xi_k)\right)\delta_1^{2+\beta+\frac 12}\delta_2^{2+\beta+\frac 12}.$$
We infer from Theorem \ref{thm:compactnessh2} that $C_\phi$ is compact.
\end{proof}

We can then use Examples \ref{ex:compact1} and \ref{ex:compact2} to provide examples of symbols $\phi\in\mathcal O(\DD^2,\DD^2)$
touching the boundary 
such that $C_\phi$ is continuous on $H^2(\DD^2)$ and $C_\phi$ is compact (resp. not compact),
for instance $\phi(z)=((z_1+z_2)/2,0)$ (resp. $\phi(z)=(z_1z_2,0)$).

\subsection{Compact composition operators on $H^2(\DD^3)$}
We now start a similar study of compact composition operators on $H^2(\DD^3)$. It turns out that the symbols
which we study in depth in Section \ref{sec:volumeestimate} give rise to noncompact composition operators.
As above, we need to introduce several definitions. 
First of all, let $\varphi\in\mathcal O(\DD^3,\DD)\cap\mathcal C^2( \overline{\DD^3})$ and let $\xi\in\TT^3$
such that $\varphi(\xi)\in\TT.$ Without loss of generality, we will assume $\xi=e$ and $\varphi(\xi)=1$. We will say that $\varphi$ is truely at least two-dimensional at $\xi$ provided
\begin{itemize}
\item at least two of $\frac{\partial\varphi}{\partial z_1}(e),$ $\frac{\partial\varphi}{\partial z_2}(e),$ $\frac{\partial\varphi}{\partial z_3}(e)$ are not equal to $0;$
\item let us assume that $\frac{\partial \varphi}{\partial z_1}(e)\neq 0$ and let us write $\Imm \varphi(\theta)=L(\theta)+G(\theta)$ where $L$ is a linear
form and $G(\theta)=O(\theta_1^2+\theta_2^2+\theta_3^2)$. The map $U(\theta)=(L(\theta)+G(\theta),\theta_2,\theta_3)$ defines a diffeomorphism between two neighbourhoods $\mathcal U$ and $\mathcal V$ of
$0$ in $\mathbb R^3$ since $\frac{\partial L}{\partial \theta_1}(0)\neq 0.$ We then may write
$$\Ree \varphi\circ U^{-1}(u)=1+H(u).$$
 We shall require, for a truely at least two-dimensional function at $\xi,$ that $H(u)\neq h(u_1)$ for some function $h.$ 
\end{itemize}

We will say that $\varphi$ is truely at least two-dimensional at $\xi$ of order $2$ provided, writing
$\Ree \varphi(\theta)=1-Q(\theta)+O(\theta_1^2+\theta_2^2+\theta_3^2)$ with $Q$ a quadratic form,
that the signature of $Q$ is $(2,0)$ or $(3,0)$. As before a truely at least two-dimensional function at $\xi$ 
of order $2$ is automatically truely two-dimensional at $\xi.$ 

We will also have to consider symbols $\phi$ such that there exists $I\subset\{1,2,3\}$ with $|I|=2$ and $\phi_I(\TT^3)\cap\TT^2\neq \varnothing.$
Here are the relevant definitions. Let $\varphi\in \mathcal O(\DD^3,\DD^2)\cap\mathcal C^3( \overline{\DD^3})$ and let $\xi\in\TT^3$
such that $\varphi(\xi)\in\TT^2$.  Without loss of generality, we will assume $\xi=e$ and $\varphi(\xi)=(1,1)$. We will say that $\varphi$ is truely three-dimensional at $\xi$ provided
\begin{itemize}
\item $\nabla\varphi_1(e)$ and $\nabla\varphi_2(e)$ are linearly independent
\item let us assume that $\left(\frac{\partial \varphi_j}{\partial z_k}(e)\right)_{1\leq j,k\leq 2}$ is invertible and let us write
$\Imm \varphi_j(\theta)=L_j(\theta)+G_j(\theta)$ where $L_j$ is a linear
form and $G_j(\theta)=O(\theta_1^2+\theta_2^2+\theta_3^2)$. The map $U(\theta)=(L_1(\theta),L_2(\theta),\theta_3)$ defines a diffeomorphism between two neighbourhoods
of $0$ in $\mathbb R^3$ and  for $j=1,2,$
$$\Ree \varphi_j\circ U^{-1}(u)=1+ H_j(u).$$
We shall require, for a truely three-dimensional function, that $H_j(u)\neq h_j(u_1,u_2)$ for some function $h_j,$ for $j=1,2.$
\end{itemize}

We will say that $\varphi$ is truely three-dimensional at $\xi$ of order $2$ provided that $\nabla\varphi_1(e)$ and $\nabla\varphi_2(e)$ are linearly independent and, writing
$\Ree \varphi_j(\theta)=1-Q_j(\theta)+o(\theta_1^2+\theta_2^2+\theta_3^2)$ with $Q_j$ a quadratic form,
that the signature of both $Q_1$ and $Q_2$ is equal to $(3,0)$. Observe that a truely three-dimensional function at $\xi$ 
of order $2$ is automatically truely three-dimensional at $\xi.$ 

\begin{example}
 The map $\varphi(z)=\left(z_1z_2,\frac{z_1}3+\frac{2z_2}3\right)$ is not truely three-dimensional at $e,$ even if $\nabla\varphi_1(e)$
 and $\nabla\varphi_2(e)$ are linearly independent. On the contrary, the map
 $\varphi(z)=\left(\frac13z_1+\frac 13z_2+\frac13z_3,\frac 14z_1+\frac 12z_2+\frac 14z_3\right)$
 is truely three-dimensional of order $2$ at $e.$
\end{example}

We now give an analogue of Theorem \ref{thm:compactnessvolume} in dimension 3.

\begin{theorem}\label{thm:compactnessvolume3}
\begin{enumerate}[(a)]
 \item Let $\varphi\in\mathcal O(\DD^3,\DD)\cap\mathcal C^3(\overline{\DD^3}).$ 
Assume that there exists $\xi\in\TT^3$ such that $\varphi(\xi)\in\TT$ and that $\varphi$
is truely at least two-dimensional at $\xi$ of order $2.$ Then there exist a neighbourhood $\mathcal U$ of $\xi$ and $C>0$ such that, for all $\beta\in(-1,0]$, 
for all $\eta\in\TT,$ for all $\delta>0,$
$$V_\beta\left(\left\{z\in\DD^3\cap\mathcal U:\ |\varphi(z)-\eta|<\delta\right\}\right)\leq
C\delta^{2+\beta+\frac 12}.$$
\item  Let $\varphi\in\mathcal O(\DD^3,\DD^2)\cap\mathcal C^2(\overline{\DD^3}).$ 
Assume that there exists $\xi\in\TT^3$ such that $\varphi(\xi)\in\TT^2$ and that $\varphi$
is truely three-dimensional at $\xi$ of order $2.$  Then there exist a neighbourhood $\mathcal U$ of $\xi$ and $C>0$ such that, for all $\beta\in(-1,0]$, 
for all $\eta\in\TT^2,$ for all $\ovd\in(0,2]^2,$
$$V_\beta\left(\left\{z\in\DD^3\cap\mathcal U:\ |\varphi_j(z)-\eta_j|<\delta_j,\ j=1,2\right\}\right)\leq C\delta_1^{2+\beta}\delta_2^{2+\beta}\big(\inf(\delta_1,\delta_2)\big)^{1/2}.$$ 
\end{enumerate}
\end{theorem}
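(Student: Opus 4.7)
The plan is to adapt the two-step strategy of Theorems \ref{thm:volumelinearlydependent} and \ref{thm:compactnessvolume}: first prove an angular estimate on $\TT^3$, then lift it to a $V_\beta$-bound on $\DD^3$ via a Taylor expansion of $|\varphi_j|^2$ at $e$ (which controls the radial depths $\rho_k = 1-|z_k|$) together with Lemma \ref{lem:volumeestimate4}. The lifting mirrors the end of the proofs of Theorems \ref{thm:volumelinearlydependent} and \ref{thm:compactnessvolume}, and the arithmetic of the exponents works out thanks to $\delta,\delta_1,\delta_2\leq 1$ and $\beta\geq -1$. I therefore focus on the angular estimates, assuming throughout that $\xi=e$.

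\textbf{Part (a).} The target is $\lambda_3(\{\theta\in\mathcal O:|\varphi(\theta)-\eta|<\delta\})\leq C\delta^{3/2}$. We may assume $\partial\varphi/\partial z_1(e)\neq 0$, and a linear change of $\theta$-coordinates brings $L$ to $\theta_1$. Setting $U(\theta)=(\theta_1+G(\theta),\theta_2,\theta_3)$, a diffeomorphism near $0$ with $\mathrm dU(0)=I_3$, yields $\Imm\varphi\circ U^{-1}(u)=u_1$ and $\Ree\varphi\circ U^{-1}(u)=1-Q(u)+K(u)$ with $K(u)=O(|u|^3)$ and $\mathrm{sig}(Q)\in\{(2,0),(3,0)\}$. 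If the restriction of $Q$ to $\{u_1=0\}$ is nondegenerate---always true when $\mathrm{sig}(Q)=(3,0)$, and when $\mathrm{sig}(Q)=(2,0)$ provided the $1$-dimensional kernel of $Q$ is transverse to $\{u_1=0\}$---the parametrized Morse lemma (Lemma \ref{lem:parametrizedmorse}) applied to $-Q+K$ in the variables $(u_2,u_3)$ reduces $\Ree\varphi$ to $1-v_2^2-v_3^2+h(v_1)$; Fubini with Lemma \ref{lem:volumeestimate3} then gives the even stronger bound $\lambda_3\lesssim \delta^2$. Otherwise $\mathrm{sig}(Q)=(2,0)$ and the kernel of $Q$ lies inside $\{u_1=0\}$: a linear change of $(u_2,u_3)$ alone (which preserves $u_1$) places the kernel along the $u_3$-axis, so $Q$ becomes a positive definite quadratic form in $(u_1,u_2)$ alone. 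Lemma \ref{lem:parametrizedmorse} applied now to $-Q+K$ in the single variable $u_2$ with parameters $(u_1,u_3)$---the nondegeneracy $\partial^2 Q/\partial u_2^2(0)>0$ holds by positive definiteness---brings $\Ree\varphi$ to $1-v_2^2+h(u_1,u_3)$, and Fubini gives $\lambda_3\lesssim \delta\cdot R\cdot\sqrt{\delta}\lesssim \delta^{3/2}$, with $R$ an upper bound for $u_3$ on $\mathcal O$.

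\textbf{Part (b).} The target is $\lambda_3(E)\leq C\delta_1^{3/2}\delta_2$, assuming $\delta_1\leq\delta_2$, where $E=\{\theta:|\varphi_j(\theta)-\eta_j|<\delta_j,\ j=1,2\}$. Linear independence of $\nabla\varphi_1(e)$ and $\nabla\varphi_2(e)$ yields linear independence of the first-order parts $L_1,L_2$ of $\Imm\varphi_j(\theta)$ on $\RR^3$, so a linear change of $\theta$ brings $L_j$ to $\theta_j$ for $j=1,2$. The diffeomorphism $U(\theta)=(\theta_1+G_1(\theta),\theta_2+G_2(\theta),\theta_3)$ (with $\mathrm dU(0)=I_3$) then gives $\Imm\varphi_j\circ U^{-1}(u)=u_j$ and $\Ree\varphi_j\circ U^{-1}(u)=1-Q_j(u)+K_j(u)$, each $Q_j$ positive definite on $\RR^3$. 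In particular, the coefficient of $u_3^2$ in $Q_1$ is strictly positive, so Lemma \ref{lem:parametrizedmorse} applied to $-Q_1+K_1$ in the variable $u_3$ with parameters $(u_1,u_2)$ reduces $\Ree\varphi_1$ to $1-v_3^2+h(u_1,u_2)$; for each fixed $(u_1,u_2)$ the preimage of an interval of length $2\delta_1$ in $\Ree\varphi_1$ then has $u_3$-measure $\lesssim \sqrt{\delta_1}$. Multiplying by the $(u_1,u_2)$-area bound $4\delta_1\delta_2$ coming from $|u_j-y_j|<\delta_j$ yields $\lambda_3(E)\lesssim \delta_1^{3/2}\delta_2$; the $Q_2$-constraint is not needed here.

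\textbf{Main obstacle.} The degenerate sub-case in part (a), where the kernel of $Q$ sits inside $\{u_1=0\}$, prevents a direct application of Lemma \ref{lem:parametrizedmorse} in both parameter directions $(u_2,u_3)$ at once; one must extract the $\sqrt{\delta}$ gain from the single nondegenerate direction $u_2$ and leave $u_3$ free within $\mathcal O$. For part (b), the parallel concern arises at the lifting step when some individual $\partial\varphi_j/\partial z_k(e)$ vanishes: the resulting weaker radial bound $\rho_k\lesssim \delta_2$ (rather than $\delta_1$) is absorbed using $\delta_1\leq\delta_2\leq 1$ and $\beta\geq -1$, so that the desired exponent inequality bounding $\delta_1^{3(1+\beta)}\delta_2\cdot\delta_1^{3/2}$ by $\delta_1^{5/2+\beta}\delta_2^{2+\beta}$ still holds.
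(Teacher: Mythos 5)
Your proof is correct and follows essentially the same route as the paper: normalize the imaginary part(s) by a change of variables with $\mathrm dU(0)=I_3$, apply the parametrized Morse lemma to the real part, conclude by Fubini, and lift to the $V_\beta$-estimate by the radial argument of Theorem \ref{thm:compactnessvolume}. The only divergence is in part (a): where you split into cases according to whether $Q$ restricted to $\{u_1=0\}$ is nondegenerate, the paper notes that $Q$, having rank at least $2$, is always positive in some direction of $\{u_1=0\}$ and uniformly applies the one-variable Morse lemma in that single direction, which gives the required $\delta^{3/2}$ in all cases without a case distinction (at the cost of not recording the sharper $\delta^2$ bound available in your nondegenerate sub-case).
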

\begin{proof}
We shall be very squetchy. For (a), we just need to prove the following: there exist $C>0$ 
and a neighbourhood $\mathcal O$ of $0$ in $\mathbb R^2$ such that, for all $\delta>0,$
for all $\eta\in\TT,$ 
$$\lambda_3\left(\left\{\theta\in\mathcal O:\ |\varphi(\theta)-\eta|\leq\delta\right\}\right)\leq C\delta^{3/2}.$$
We may and shall assume that $\Imm \varphi(\theta)=\theta_1$ and write $\Ree \varphi(\theta)=1-Q(\theta)+o(\theta_1^2+\theta_2+\theta_3^2)$
where $\textrm{signature}(Q)=(2,0)$ or $(3,0)$. In both cases we can do a linear change of variables to assume that $Q(\theta)=\theta_2^2+R(\theta),$
with $\textrm{signature}(R)=(1,0)$ or $(2,0)$. We then apply the parametrized Morse lemma to get new variables $v=(v_1,v_2,v_3)$ and a $\mathcal C^1$-map
$h:\mathcal V\subset\RR^2\to\RR$ such that
\begin{align*}
 \Imm \varphi(v)&=v_1\\
 \Ree \varphi(v)&=1-v_2^2+h(v_1,v_3).
\end{align*}
We finally conclude as in Theorem \ref{thm:compactnessvolume}. Observe that if the signature of $Q$ is $(3,0)$ we
could even get a better estimate by using the parametrized Morse lemma in order to write $\Ree \varphi(v)=1-v_2^2-v_3^2+h(v_1)$,
but this is not useful for our purpose.

Regarding (b), we start by writing
\begin{align*}
 \Imm \varphi_1(\theta)&=L_1(\theta)+G_1(\theta)\\
 \Imm \varphi_2(\theta)&=L_2(\theta)+G_2(\theta)
\end{align*}
with $L_1,L_2$ two independent linear forms. We may assume $L_1(\theta)=\theta_1,$ $L_2(\theta)=\theta_2$ and
using the change of variables $U(\theta)=(L_1(\theta)+G_1(\theta),L_2(\theta)+G_2(\theta),\theta_3),$
we will assume 
\begin{align*}
 \Imm \varphi_1(\theta)&=\theta_1\\
 \Imm \varphi_2(\theta)&=\theta_2.
\end{align*}
Let us assume that $\delta_1\leq\delta_2.$ Since the signature of $Q_1$ is $(3,0)$, we may apply the parametrized Morse lemma
and assume, up to a change of variables, that
$$\Ree \varphi_1(\theta)=1-\theta_3^2+h_1(\theta_1,\theta_2)$$
with $h_1$ a $\mathcal C^1$-function defined in a neighbourhood of $0.$ Arguing as in the proof of
Theorem \ref{thm:compactnessvolume}, we get
$$\lambda_3\left(\left\{\theta\in\mathcal U:\ |\phi_j(\theta)-\eta_j|\leq\delta_j,\ j=1,2\right\}\right)\leq C\delta_1\delta_2\big(\inf(\delta_1,\delta_2)\big)^{1/2}.$$
\end{proof}

We deduce a result giving a large number of examples of compact and noncompact composition operators on $H^2(\DD^3)$.
\begin{theorem}\label{thm:compactnessdim3}
 Let $\phi\in\mathcal O(\DD^3,\DD^3)\cap\mathcal C^3(\overline{\DD^3}).$ If $C_\phi$ defines a compact composition operator on $H^2(\DD^3)$, then
 \begin{enumerate}
  \item $\phi(\TT^3)\cap\TT^3=\varnothing$;
  \item for all $\xi\in\TT^3,$ for all $I\subset\{1,2\}$ with $\phi_I(\xi)\in\TT^2,$
  $\phi_I$ is truely three-dimensional at $\xi$;
  \item for all $\xi\in\TT^3,$ for all $j\in\{1,2,3\}$ such that $\phi_j(\xi)\in\TT,$
  $\phi_j$ is truely at least two-dimensional at $\xi$.
 \end{enumerate}
Conversely, assume that
 \begin{enumerate}
  \item $\phi(\TT^3)\cap\TT^3=\varnothing$;
  \item for all $\xi\in\TT^3,$ for all $I\subset\{1,2\}$ with $\phi_I(\xi)\in\TT^2,$
  $\phi_I$ is truely three-dimensional at $\xi$ of order $2$;
  \item for all $\xi\in\TT^3,$ for all $j\in\{1,2,3\}$ such that $\phi_j(\xi)\in\TT,$
  $\phi_j$ is truely at least two-dimensional at $\xi$ of order $2$.
 \end{enumerate}
 Then $C_\phi$ is compact on $H^2(\DD^3)$.
\end{theorem}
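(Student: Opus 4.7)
The plan mirrors the two-dimensional argument of Theorem \ref{thm:compactnessdim2}: Lemma \ref{lem:noncompacth2} handles the necessary direction, while Theorem \ref{thm:compactnessh2}, the volume estimates of Theorem \ref{thm:compactnessvolume3}, and a three-layered covering argument yield the sufficient direction.

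For the necessary part, assume $C_\phi$ is compact (hence bounded). If some $\xi \in \TT^3$ satisfies $\phi(\xi) \in \TT^3$, Theorem \ref{thm:main} forces $\mathrm d\phi(\xi)$ to be invertible, so the inverse function theorem provides $c > 0$ such that for small $\delta$ the set $\{e^{i\theta} : \phi(\theta) \in S(\phi(\xi), (\delta, \delta, \delta))\}$ contains an angular cube of side $c\delta$ around $\xi$; hence $\sigma_3 \gtrsim \delta^3 = \delta_1\delta_2\delta_3$, violating \eqref{eq:noncompactness}. Next, suppose $\phi_I(\xi) \in \TT^2$ for some $|I| = 2$ but $\phi_I$ is not truely three-dimensional at $\xi$; two sub-cases arise. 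If $\nabla\phi_{i_1}(\xi)$ and $\nabla\phi_{i_2}(\xi)$ are linearly dependent, boundedness of $C_\phi$ and Theorem \ref{thm:main} place us in one of cases (b)--(d), and a direct construction in the spirit of Theorem \ref{thm:necconditions} (picking $\theta$ in a tube around the zero set of the governing quadratic forms and letting the third coordinate run over $[-1,1]$) produces $\sigma_3(\phi_I^{-1}(S(\phi_I(\xi), (\delta, \delta)))) \gtrsim \delta^2$, so \eqref{eq:noncompactness} fails along $\ovd(n) = (\delta_n, \delta_n, 2)$. If the gradients are linearly independent but $\Ree \phi_j \circ U^{-1}(u) = 1 + h_j(u_1, u_2)$ for some $j$, then for any fixed $(u_1, u_2) \in [-\delta, \delta]^2$ the coordinate $u_3$ ranges freely in a neighbourhood of $0$; pushing back through $U^{-1}$ yields a set of $\sigma_3$-measure $\gtrsim \delta^2$ inside the same preimage, again contradicting \eqref{eq:noncompactness}. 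An entirely analogous argument, with two free directions and $\ovd(n) = (\delta_n, 2, 2)$, treats the case where $\phi_j$ is not truely at least two-dimensional at a point with $\phi_j(\xi) \in \TT$.

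For the sufficient direction I apply Theorem \ref{thm:compactnessh2}, so the goal is a bound $V_\beta(\phi^{-1}(S(\eta, \ovd))) \leq \veps(\min_i \delta_i)\, \delta_1^{2+\beta}\delta_2^{2+\beta}\delta_3^{2+\beta}$ with $\veps(t) \to 0$ at $0^+$, uniformly in $\beta \in (-1, 0]$, $\eta \in \TT^3$, and small $\ovd$. Set $A_1 = \{\xi \in \overline{\DD^3} : \exists I,\ |I| = 2,\ \phi_I(\xi) \in \TT^2\}$. For $\xi \in A_1$ with witness $I$ and $\{i_3\} = \{1,2,3\} \setminus I$, hypothesis~(2) and Theorem \ref{thm:compactnessvolume3}(b) produce a neighbourhood $\mathcal U(\xi)$ on which $V_\beta(\phi_I^{-1}(S(\eta_I, \ovd_I)) \cap \mathcal U(\xi)) \lesssim \delta_{i_1}^{2+\beta}\delta_{i_2}^{2+\beta}(\min(\delta_{i_1}, \delta_{i_2}))^{1/2}$, and continuity of $\phi_{i_3}$ lets me shrink $\mathcal U(\xi)$ to ensure $|\phi_{i_3}(z)| < 1 - \veps_1$ there, forcing the preimage to be empty unless $\delta_{i_3} \geq \veps_1$; absorbing the factor $\veps_1^{-(2+\beta)}$ gives $V_\beta \lesssim (\min_i \delta_i)^{1/2}\, \delta_1^{2+\beta}\delta_2^{2+\beta}\delta_3^{2+\beta}$ on $\mathcal U(\xi)$. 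Extracting a finite subcover and repeating on the complement with $A_2 = \{\xi \in \overline{\DD^3} \setminus \bigcup \mathcal U(\xi_k) : \exists j,\ \phi_j(\xi) \in \TT\}$, at such $\xi$ the other two components are uniformly separated from $\TT$ and Theorem \ref{thm:compactnessvolume3}(a) furnishes the analogous bound with $\delta_j^{1/2}$. A finite subcover of $A_2$ together with the open complement, on which every $\phi_j$ is uniformly separated from $\TT$ (so preimages are empty for sufficiently small $\ovd$), assembles to the desired global estimate and Theorem \ref{thm:compactnessh2} delivers compactness.

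The main obstacle is the necessary direction in the sub-case where $\phi_I$ (or $\phi_j$) has full-rank linear part yet the quadratic tangential term $H_j$ fails to depend on the extra variable. Extracting a concrete bad sequence of Carleson windows requires quantitatively transporting the diffeomorphism $U$ back to $\TT^3$ so that a positive-measure set in $(u_1,u_2,u_3)$-space survives as a positive-measure set in $\theta$-space, in the same spirit as the dimension-two proof of Theorem \ref{thm:compactnessdim2}. The remainder of the argument is essentially bookkeeping, refining the three-level covering scheme of Theorem \ref{thm:compactnessdim2} to accommodate the additional boundary stratum $\phi_I \in \TT^2$ absent in the bidisc setting.
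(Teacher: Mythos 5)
Your proposal follows the paper's route: the necessary conditions are obtained by exhibiting, for each failure mode, a subset of $\phi^{-1}(S(\eta,\ovd))\cap\TT^3$ of $\sigma_3$-measure $\gtrsim\delta_1\delta_2\delta_3$, contradicting \eqref{eq:noncompactness}, and the sufficient part combines Theorem \ref{thm:compactnessvolume3} with the stratified covering of Theorem \ref{thm:compactnessdim2} (adapted to the extra stratum $\phi_I(\xi)\in\TT^2$) and Theorem \ref{thm:compactnessh2}. Your treatment of the sufficiency and of conditions (1) and (3) matches the paper's. Two remarks on the necessity of (2). First, in the sub-case where $\nabla\phi_{i_1}(\xi)$ and $\nabla\phi_{i_2}(\xi)$ are linearly dependent, the detour through Theorem \ref{thm:main} and its cases (b)--(d) is superfluous and your construction is misdescribed: when $s(\phi,I,\xi)=3$ the ``zero set of the governing quadratic forms'' is the origin and the third coordinate certainly cannot run over $[-1,1]$. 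The paper's argument uses only first-order data: both imaginary parts are $\kappa_jL(\theta)+O(|\theta|^2)$ and both real parts are $1+O(|\theta|^2)$, so in suitable coordinates the preimage of $S(\phi_I(\xi),(\delta,\delta,2))$ contains a box $[-c\delta,c\delta]\times[-c\delta^{1/2},c\delta^{1/2}]^2$ of measure $\gtrsim\delta^2$, which is all that is needed.

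Second, and more substantively, in the sub-case where the gradients are independent but $H_j(u)=h_j(u_1,u_2)$ \emph{for some} $j$, your claim that $u_3$ then ranges freely over a fixed neighbourhood inside the \emph{joint} preimage is only justified when \emph{both} $H_1$ and $H_2$ are independent of $u_3$. If, say, $H_1=h_1(u_1,u_2)$ but $H_2$ genuinely involves $u_3$ (e.g. $H_2(u)=-u_3^2+\cdots$), the constraint $|\phi_{i_2}\circ U^{-1}(u)-\eta_2|<\delta$ confines $u_3$ to a set of measure tending to $0$, and the lower bound $\gtrsim\delta^2$ is lost; as written, this step fails. The paper's own proof only writes out the case where the degeneracy holds for $j=1,2$ simultaneously, so you are not worse off than the source, but since the negation of ``truely three-dimensional'' includes the mixed case, you should either restrict your claim to the simultaneous case (as the paper implicitly does) or supply a separate argument for the mixed one.
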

\begin{proof}
 We shall only consider the necessary condition, since the sufficient condition can be proved exactly like the 
 sufficient condition of Theorem \ref{thm:compactnessdim2}. That (1) is mandatory
 follows by the mean value theorem like in the proof of Theorem \ref{thm:compactnessdim2}. More generally,
 for any $d\geq 1$ and any symbol $\phi\in\mathcal O(\DD^d,\DD^d)\cap\mathcal C^1(\overline{\DD^d})$
 such that $\phi(\TT^d)\cap\TT^d\neq\varnothing,$ $C_\phi$ cannot be compact on $H^2(\DD^d)$.
 That (3) is necessary also follows from the arguments of Theorem \ref{thm:compactnessdim2} and thus we just deal with (2).
 Assume that for $I=\{1,2\}$ and $\xi=e,$ $\phi_I(e)=(1,1)$ and $\phi_I$ is not truely three-dimensional at $\xi.$
 If $\nabla\phi_1(e)$ and $\nabla\phi_2(e)$ are not linearly independent, we may assume
 \begin{align*}
  \Imm \phi_1(\theta)&=\kappa_1 \theta_1+O(\theta_1^2+\theta_2^2+\theta_3^2)\\
  \Imm \phi_2(\theta)&=\kappa_2 \theta_1+O(\theta_1^2+\theta_2^2+\theta_3^2)\\
   \Ree \phi_j(\theta)&=1+O(\theta_1^2+\theta_2^2+\theta_3^2),\ j=1,2.
 \end{align*}
Then, for some $c>0,$ for all $\delta>0,$ setting $\ovd=(\delta,\delta,1),$
$$\left\{e^{i\theta}:\ \phi(\theta)\in S(e,\ovd)\right\}\supset [-c\delta,c\delta]\times [-c\delta^{1/2},c\delta^{1/2}]\times [-c\delta^{1/2},c\delta^{1/2}]$$
which prevents \eqref{eq:noncompactness} to be true. If $\nabla\phi_1(e)$ and $\nabla\phi_2(e)$ are linearly independent, we can assume
\begin{align*}
 \Imm\phi_1(\theta)&=\theta_1\\
 \Imm\phi_2(\theta)&=\theta_2\\
 \Ree \phi_j(\theta)&=1+O(\theta_1^2+\theta_2^2),\ j=1,2.
\end{align*}
Then, for some $c>0,$ for all $\delta>0,$ setting $\ovd=(\delta,\delta,1),$
$$\left\{e^{i\theta}:\ \phi(\theta)\in S(e,\ovd)\right\}\supset [-c\delta,c\delta]\times [-c\delta,c\delta]\times [-c,c]$$
which prevents \eqref{eq:noncompactness} to be true.
\end{proof}

\begin{example}
 \begin{enumerate}[(a)]
  \item Let $\phi(z)=\left(z_1z_2,\frac{z_1}3+\frac{2z_2}3,0\right)$. Then $C_\phi$ is a continuous and not compact operator on $H^2(\DD^3)$.
  \item Let $\phi(z)=\left(\frac13z_1+\frac 13z_2+\frac13z_3,\frac 14z_1+\frac 12z_2+\frac 14z_3,0\right)$.
  Then $C_\phi$ is a compact operator on $H^2(\DD^3)$.
 \end{enumerate}

\end{example}

\begin{remark}
 To characterize completely regular symbols leading to compact composition operators on $H^2(\DD^2)$ or $H^2(\DD^3),$
 it seems that we would need to inspect high order terms in the expansion of $\Ree \phi_j$ and $\Imm \phi_j.$
 In that case, some tools are missing (the signature, the parametrized Morse lemma).
 However, for a specific map, a detailed study using the arguments developed here should be possible.
 It is likely that the study of continuity on $H^2(\DD^d)$ with $d\geq 4$ also requires
 to consider more terms in the expansion of $\Ree\phi_j$ and $\Imm\phi_j$.
\end{remark}

\providecommand{\bysame}{\leavevmode\hbox to3em{\hrulefill}\thinspace}
\providecommand{\MR}{\relax\ifhmode\unskip\space\fi MR }
\providecommand{\MRhref}[2]{%
  \href{http://www.ams.org/mathscinet-getitem?mr=#1}{#2}
}
\providecommand{\href}[2]{#2}

\end{document}